\newtheorem{theorem}{Theorem}[section]
\newtheorem{proposition}[theorem]{Proposition}
\newtheorem{lemma}[theorem]{Lemma}
\newtheorem{corollary}[theorem]{Corollary}
\theoremstyle{definition}
\newtheorem{definition}[theorem]{Definition}
\newtheorem{example}[theorem]{Example}
\newtheorem{remark}[theorem]{Remark}
\newcommand{\K}{\mathds{k}}
\newcommand{\Z}{\mathbb{Z}}
\newcommand{\X}{\langle X \rangle}
\newcommand{\st}{\text{st}}
\newcommand{\Char}{\operatorname{char}}
\newcommand{\id}{\operatorname{id}}
\theoremstyle{plain}
\newcounter{maint}
\theoremstyle{plain}
\begin{document}
 \thispagestyle{empty}

\title[Noncommutative binomial theorem, shuffle type polynomials and Bell polynomials]
{Noncommutative binomial theorem, shuffle type polynomials and Bell polynomials}

\author{
{Huan Jia $^{1,2}$}
and 
{Yinhuo Zhang $^{1,*}$}}

\address{$^{1}$ Department of Mathematics and Statistics,
University of Hasselt, Universitaire Campus, 3590 Diepenbeek, Belgium}

\address{$^{2}$ School of Mathematical Sciences, East China Normal University, Shanghai 200241, China}

\address{Huan Jia $^{1,2}$}
\email{huan.jia@uhasselt.be}

\address{Yinhuo Zhang $^{1,*}$}
\email{yinhuo.zhang@uhasselt.be}

\subjclass[2010]{11B65; 05A10; 05A05; 05A30; 08B20; 16T05}
\date{}
\maketitle

\begin{abstract}
In this paper we use the Lyndon-Shirshov  basis to  study the shuffle type polynomials. We give a free noncommutative binomial (or multinomial) theorem in terms of the Lyndon-Shirshov basis. Another noncommutative binomial theorem given by the shuffle type polynomials with respect to  an adjoint  derivation is established. As a result, the Bell differential polynomials and the $q$-Bell differential polynomials can be derived from the second binomial theorem. The relation between the shuffle type polynomials and the  Bell  differential polynomials is established.  Finally, we give some applications of the free noncommutative binomial theorem including application of the shuffle type polynomials to bialgebras and Hopf algebras. 

\bigskip

\noindent {\it Keywords: noncommutative binomial formula, shuffle type polynomials, Bell differential polynomials, $q$-Bell polynomials, Lyndon-Shirshov basis.} 
\end{abstract}

\section*{Introduction}

The binomial theorem is a fundamental and perhaps most well-known result in Algebra that describe the expansion of a (non-negative integer) power of a binomial.  The theorem is widely used in mathematics, physics, and engineering, and has many important applications such as in combinatorics, probability, statistics and optimizations, etc. The binomial coefficients are extremely important as they are everywhere in discrete mathematics. The first formulation of the full theorem including a proof of it came in the 10 century by the Persian mathematician Al-Karaji. Since then, there  have been a number of generalizations of the theorem, one of which is attributed by Newton, from the non-negative integer power to a real number power. Another generalization worth mentioning  is the so-called multinomial theorem of Leibniz, which considers the expansion of a general multinomial $(x_1+x_2+\ldots +x_m)^n$ into a polynomial of $m$ variables. This result has found numerous applications in the field of combinatorics.  

The other direction of generalization is to consider the noncommutative variables and their multinomial theorem.   The earliest noncommutative  version of  the binomial theorem perhaps appeared in  \cite{Y1952}, where  the author gave the binomial formula  for elements in the Heisenberg-Weyl algebra $A_{h}$ generated by $x,y$ satisfying the relations: $xy-yx=h$, $xh=hx$ and $hy=yh$. The noncommutative binomial theorem reads  as follows:
\begin{align}\label{formula-1}
(x+y)^{n}=\sum_{j=0}^{[n/2]} \sum_{i=0}^{n-2j} \frac{n !}{j! i!(n-2j-i)!}\left(\frac{h}{2}\right)^{j}y^{i} x^{n-2j-i}.
\end{align}
This formula was also considered by Cohen \cite{C1966} and Wilcox \cite{W1967}, and was generalized by Mikha\u{\i}lov \cite{M1983}.

Another important $h$-analogue of the binomial formula was established by Benaoum in \cite{B1998} for the elements in the $h$-deformed quantum plane (or Jordan quantum plane) with the relation: $xy=yx+hy^{2}$ where $h$ is a central element.  Benaoum in \cite{B1999} obtained the binomial theorem in  case  $xy=qyx+hy^{2}$. Benaoum's binomial formula was later generalized by Mansour--Schork  to the case: $xy=qyx+hf(y)$ where $f(y)$ is a polynomial \cite{MS2011}.

Apart from the $h$-analogue of the binomial formula, there are $q$-analogues of the binomial formula. The first $q$-analogue 
appeared in \cite{Sch1953}.  In the quantum plane $\mathbb{C}_q[x,y]$, where $xy=qyx$,  the $q$-binomial formula is valid:
\begin{equation}\label{q-binomial}
(x+y)^n=\sum_{j=0}^n {\binom{n}{j}}_q y^{n-j}x^j
\end{equation}
where ${\binom{n}{j}}_q $ is the $q$-binomial (or Gaussian binomial) coefficient.  Since then, there have been various generalizations of the $q$-binomial formula  (\ref{q-binomial}). We mention here a combined one of  the formulas (\ref{formula-1}) and (\ref{q-binomial}) given by Blumen \cite{B2006}.  
\begin{align}\label{formula:Blumen}
(x+y)^{n}=\sum_{r+2s+t=n} \frac{n!}{(r)_{q}!(2)_{q}^{s}(s)_{q^{2}}!(t)_{q}!} y^{r}h^{s}x^{t}.
\end{align}
This formula is obtained from the relations: $xy=qyx+h$, $xh=q^{2}hx$, $hy=q^{2}yh$ and $q\neq 0$,   satisfied by the coproducts of non-simple root vectors in the quantum superalgebra $U_{q}(osp(1|2n))$.  When $q=1$,  Formula (\ref{formula:Blumen}) becomes Formula (\ref{formula-1}). When $h=0$, Formula (\ref{formula:Blumen}) is the $q$-binomial formula (\ref{q-binomial}). 

The noncommutative binomial formulas mentioned above have strong relation with the so-called Bell differential polynomials. 
 In 1998  Schimming--Rida \cite{SR1998} introduced the (noncommutative) Bell differential polynomials to give the binomial theorem for any associative algebra or free algebra, see Theorem \ref{binomial formula of Schimming-Rida}. At the same time,  Munthe-Kaas \cite{MK1995,MK1998} introduced certain noncommutative polynomials in numerical integration on manifolds, which are essentially the noncommutative Bell polynomials \cite{SR1996}. Furthermore,   Lundervold--Munthe-Kaas \cite{LM2011} used the noncommutative Bell polynomials to give the binomial formula (\ref{formula:noncomm-binom-LMK}).  

The  Bell  differential polynomials are closely related to  the shuffle type polynomials. In this paper,  we study the shuffle type polynomials, and use the Lyndon-Shirshov basis to  give a  noncommutative binomial formula for two free variables.    Using the adjoint derivation and  the shuffle type polynomials  we establish  another noncommutative binomial theorem,  from which the Bell differential polynomials and the $q$-Bell differential polynomials can be deduced.   Applications of the free noncommutative binomial theorem including application of the shuffle type polynomials to bialgebras and Hopf algebras will be given. 

The paper is organized as follows. In Section 1, we recall some results of Lyndon words and Lyndon-Shirshov basis.  In Section \ref{Section 2}, we establish a free noncommutative binomial formula using the Lyndon-Shirshov basis, see Corollary \ref{cor:noncomm-binom-thm-LSbasis}. The noncommutative binomial coefficients are determined by the following theorem. 

\textbf{Theorem A.} [Theorem~\ref{thm:noncom-binom-coeff-LSbasis}]
\textit{ Let $(X,\preceq)=(\{1,2\},1\prec 2)$.
Suppose that $E_{\alpha_{1}}^{t_{\alpha_{1}}} \cdots E_{\alpha_{n}}^{t_{\alpha_{n}}}$ is a term of $\mathcal{SH}_{i,j}(E_{2},E_{1})$, where $\alpha_{k}\in \mathcal{L}_{X}$, $t_{\alpha_{k}}\in \mathbb{N}$, $1\le k\le n $, $\alpha_{1}\succ \ldots\succ \alpha_{n}$, $i=\sum_{k=0}^{n}t_{\alpha_{k}}\alpha_{k}(2)$ and $j=\sum_{k=0}^{n}t_{\alpha_{k}}\alpha_{k}(1)$. Then the coefficient of $E_{\alpha_{1}}^{t_{\alpha_{1}}}\cdots E_{\alpha_{n}}^{t_{\alpha_{n}}}$ in $\mathcal{SH}_{i,j}(E_{2},E_{1})$ is given by
\begin{equation*}
C_{E_{\alpha_{1}}^{t_{\alpha_{1}}} \cdots E_{\alpha_{n}}^{t_{\alpha_{n}}}}=\frac{(i+j)!}{\prod_{k=1}^{n}(|\alpha_{k}|!)^{t_{\alpha_{k}}}t_{\alpha_{k}}!}\prod_{k=1}^{n}C_{E_{\alpha_{k}}}^{t_{\alpha_{k}}},
\end{equation*}
where $i+j=\sum_{k=1}^{n}|\alpha_{k}|t_{\alpha_{k}}$ and $C_{E_{\alpha_{k}}}$ is the coefficient of the term $E_{\alpha_{k}}$ in $\mathcal{SH}_{\alpha_{k}(2),\alpha_{k}(1)}(E_{2},E_{1})$.
}

Theorem A holds as well in  the multinomial case, see Corollary \ref{cor:noncom-multinomial-thm}. 
In some special cases, we obtain useful results from Theorem A, see Corollary \ref{cor:particular-case-noncom-binom}.

 Inspired by the work of Mansour--Schork \cite[Research problem 8.1]{MS2016}, Schimming--Rida \cite{SR1996,SR1998} and Lundervold--Munthe-Kaas \cite{LM2011}, we obtain the following noncommutative binomial formula given by the shuffle type polynomials related to an adjoint $\sigma$-derivation $\operatorname{ad}_{\sigma}x$ in Section \ref{Section 3}. 

\textbf{Theorem B.} [Theorem~\ref{thm:binom-formula-shuffle-poly-sigma-derivation}]
\textit{
Let $A$ be an associative algebra and $x,y\in A$. Suppose that there is an algebra homomorphism $\sigma: A\rightarrow A$. Then the following holds for $n\in \mathbb{N}$
\begin{align*}
(x+y)^{n}=\sum_{k=0}^{n}\widehat{\mathcal{SH}}_{k,n-k}(1)x^{n-k},
\end{align*}
where $\widehat{\mathcal{SH}}_{k,n-k}:=\mathcal{SH}_{k,n-k}(\operatorname{ad}_{\sigma}x+y,\sigma)$ and $\operatorname{ad}_{\sigma}x(a):=xa-\sigma(a)x$ for $a\in A$.
 }

In some special cases, Theorem B derives the Bell differential polynomials (see Remark \ref{rmk:SHn,n-k(1)-to-Bell-diff-poly}),  the $q$-Bell differential polynomials (see Section \ref{Section 3.2}) and the corresponding noncommutative binomial formula (see Corollary \ref{cor:binom-formula-by-q-Bell-diff-poly}). Note that the $q$-Bell differential polynomials are essentially the noncommutative versions of the $q$-Bell polynomials introduced by Johnson \cite{J1996}, see Remark \ref{rmk:noncom-version-of-q-Bell-poly}.

In Section \ref{Section 4.1}, we recall the Bell differential polynomials  or the noncommutative Bell polynomials.
In Section \ref{Section 4.2}, we give the following relation between the shuffle type polynomials and the Bell differential polynomials:

\textbf{Theorem C.} [Theorem~\ref{bell-shuffle}]
\textit{
Let $(X,\preceq)=(\{1,2\},1\prec 2)$ and $\mathcal{SH}_{i,j}:=\mathcal{SH}_{i,j}(E_{2},E_{1})$ for $i,j\ge 0$. \\
$\mathrm(a)$ Let $\mathcal{SH}_{i,j}^{[2]}$ be the sum of all the terms in $\mathcal{SH}_{i,j}$ whose rightmost element is not $E_{1}$. Then $\widehat{B}_{n,k}(E_{1},E_{2})=\mathcal{SH}_{k,n-k}^{[2]}$.\\
$\mathrm(b)$ Dually, denote by $\mathcal{SH}_{i,j}'$ the sum of all the terms in $\mathcal{SH}_{i,j}$ whose leftmost element is not $E_{2}$. Then $\widehat{B}_{n,k}^{*}(E_{2},E_{1})=\mathcal{SH}_{n-k,k}'$.
}

Thus Theorem A and Corollary \ref{cor:noncomm-binom-thm-LSbasis} can be used to express the Bell differential polynomials in terms of the Lyndon-Shirshov basis, see Corollary \ref{cor:Bell-diff-poly-by-LS-basis}.

In Section \ref{Section 5.1}, we give some applications of Theorem A, which contain some classical binomial formulas and Formula (\ref{formula-1}) (see Remark \ref{rmk:quasi-comm-case}). In Section \ref{Section 5.2}, we use the $q$-Bell differential polynomials to deduce the $q$-commutative Bell differential polynomials (see Theorem \ref{thm:q-comm-Bell-diff-poly}) and the corresponding binomial formula (see Corollary \ref{cor:binom-q-comm-Bell-diff-poly}). The binomial formula generalizes Blumen's formula (\ref{formula:Blumen}), see Remark \ref{rmk:q-Bell-diff-poly-to-Blumen-formula}. In Section \ref{Section 5.3}, we show that the shuffle type polynomials are strongly related the polynomials $Q_{m}^{(n)}$ which determine the coproduct of the noncommutative Fa\`a di Bruno bialgebra \cite{BFK2006}, see Theorem \ref{thm:shuffle-poly-BFdBnc-coproduct}. We mention that Radford \cite{R1979} used the shuffle type polynomials to construct some $\mathbb{N}$-graded pointed bialgebras or Hopf algebras, see Proposition \ref{pro:bialgebra-shuffle-relations}.

\section{Preliminaries} 

\subsection{Lyndon-Shirshov basis and shuffle type polynomials}\qquad

Let $\K$ be a field. Following Lothaire \cite{L1997}, we recall some results of Lyndon-Shirshov words and basis. Let $(X,\prec)$ be a totally ordered set. $\langle X \rangle$ is the free monoid generated by $X$. The order $\prec$ induces
the \textit{lexicographic order} $\prec$ on $\X$ defined as follow:

for words $\alpha,\beta\in \X$,  
\begin{align*}
\alpha\prec \beta \Longleftrightarrow \left\{\begin{array}{l}
\alpha \text { is a proper left factor of } \beta, \text { or } \\
\alpha=\gamma x \eta \ s, \beta=\gamma y \zeta \text { with } x, y \in X, x\prec y \text{ and } \gamma, \eta, \zeta \in\langle X\rangle.
\end{array}\right.
\end{align*}
Note that the order $\alpha\prec \beta$ defined above is strict. 
For $\alpha\in \langle X \rangle$, if $\alpha=x_{1}x_{2}\ldots x_{n},~ x_{i}\in X$, the \textit{length} of $\alpha$ is $n$,  
denoted $|\alpha|$. For a letter $y\in X$, denote by $\alpha(y):=\# \{x_{i}|~x_{i}=y,~1\le i\le n\}$. The empty word is denoted by $1_{X}$.  We write  $\X_{0}$ for  $\X \setminus \{1_{X}\}$.  $\K\langle X\rangle$ is the free associative algebra over $\K$ generated by $X$.

\begin{definition}
A word $\alpha$ in $\X_{0}$ is a \textit{Lyndon word} (or \textit{Lyndon-Shirshov word}) if $\alpha\in X$, or  $\alpha=\beta\gamma$ such that $\alpha\prec \gamma\beta$ for $\beta,\gamma\in \X_{0}$. Denote by  $\mathcal{L}$ or $\mathcal{L}_{X}$  the set of all Lyndon words in $\langle X \rangle$. 
\end{definition}

The Lyndon-Shirshov words were independently introduced by Lyndon \cite{L1954,L1955} and Shirshov \cite{S1953}.

\begin{example}
Let $(X,\preceq)=(\{1,2\},1\prec 2)$.  The list of the first Lyndon-Shirshov words is $$\mathcal{L}=\{1,2,12,112,122,1112,1122,1222,11112,11122,11212,11222,12122,12222,\ldots\},$$
where
$1\prec 11112 \prec 1112 \prec 11122 \prec 112 \prec 11212 \prec 1122 \prec 11222  \prec 12 \prec 12122 \prec 122 \prec 1222 \prec 12222 \prec2$.
\end{example}

The following lemma shows that the factorization of a Lyndon word is not unique.

\begin{lemma}{\cite[Proposition 5.1.3]{L1997}}\label{Lemma 2.3}
Let $\alpha\in \langle X \rangle$. $\alpha\in \mathcal{L}$ if and only if $\alpha\in X$ or $\alpha=\beta\gamma$ such that $\beta,\gamma\in \mathcal{L}$ and $\beta\prec \gamma$. More precisely, if $\alpha=\beta\gamma\in \mathcal{L}$ such that $\gamma$ is the proper right factor of maximal length, then $\beta\in \mathcal{L}$ and $\beta\prec \alpha\prec \gamma$. 
\end{lemma}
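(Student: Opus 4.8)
The plan is to carry out the whole argument through the equivalent \emph{suffix characterization}: a word $\alpha\in\X_{0}$ lies in $\mathcal{L}$ if and only if $\alpha\prec s$ for every proper nonempty right factor $s$ of $\alpha$. First I would derive this from the stated rotation definition. One direction is clean: if $\alpha\prec s$ for all proper right factors and $\alpha=ps$ with $p\ne 1_{X}$, then $s$ cannot be a prefix of $\alpha$ (else $s\prec\alpha$, contradicting $\alpha\prec s$), so the first mismatch between $\alpha$ and $s$ occurs among the first $|s|$ letters, where the rotation $sp$ agrees with $s$; hence $\alpha\prec sp$, and $\alpha\in\mathcal{L}$. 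For the reverse direction I would first record that a Lyndon word is primitive and unbordered (strict minimality among rotations forbids both a repeated block and a proper border), so again a proper right factor $s$ is never a prefix of $\alpha$, and the verdict of $\alpha$ versus $sp$ coincides with that of $\alpha$ versus $s$, turning $\alpha\prec sp$ into $\alpha\prec s$. This borderedness bookkeeping is the one delicate preliminary point.

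For the ``if'' direction of the equivalence, suppose $\alpha=\beta\gamma$ with $\beta,\gamma\in\mathcal{L}$ and $\beta\prec\gamma$. I would verify $\alpha\prec s$ for every proper right factor $s$ via the suffix characterization, splitting into two families. If $s$ is a right factor of $\gamma$ (including $s=\gamma$), I would first establish $\beta\gamma\prec\gamma$ from $\beta\prec\gamma$ (treating separately the case where $\beta$ is a proper prefix of $\gamma$, where one peels off $\beta$ and uses that $\gamma\in\mathcal{L}$, and the case of a genuine letter mismatch), and then combine with $\gamma\prec s$ for the strictly shorter right factors by transitivity. If $s=\beta'\gamma$ with $\beta'$ a proper nonempty right factor of $\beta$, then $\beta\prec\beta'$ since $\beta\in\mathcal{L}$; because $|\beta'|<|\beta|$, the witnessing mismatch lies within the first $|\beta'|$ letters, which gives $\beta\gamma\prec\beta'\gamma=s$ immediately. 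Hence $\alpha\in\mathcal{L}$.

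For the ``only if'' direction together with the refined statement, let $\alpha\in\mathcal{L}$ with $|\alpha|\ge 2$ and let $\gamma$ be the lexicographically smallest proper right factor of $\alpha$; it is unique since the proper right factors are pairwise distinct. I would note that $\gamma\in\mathcal{L}$, because every proper right factor of $\gamma$ is a shorter proper right factor of $\alpha$, hence strictly larger than $\gamma$. I would then identify $\gamma$ with the longest proper right factor lying in $\mathcal{L}$: any longer Lyndon right factor would have $\gamma$ as a proper right factor and so be $\prec\gamma$, contradicting minimality. Writing $\alpha=\beta\gamma$, the inequalities $\beta\prec\alpha$ (proper prefix) and $\alpha\prec\gamma$ (suffix characterization) are immediate, and $\beta\prec\gamma$ follows by comparing $\beta\gamma\prec\gamma$, the only subcase to rule out being that $\gamma$ is a prefix of $\beta$, which would make $\gamma$ a proper prefix of $\alpha$ and force $\gamma\prec\alpha$ against $\alpha\prec\gamma$.

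The main obstacle is the remaining assertion $\beta\in\mathcal{L}$, and here the choice of $\gamma$ as the \emph{smallest} right factor is precisely what drives the argument. Assume $\beta\notin\mathcal{L}$ and pick a proper nonempty right factor $\beta'$ of $\beta$ with $\beta\not\prec\beta'$, so $\beta'\prec\beta$. Then $\beta'\gamma$ is a proper right factor of $\alpha$, whence $\alpha\prec\beta'\gamma$. If $\beta'$ is not a prefix of $\beta$, the mismatch position already yields $\beta'\gamma\prec\beta\gamma=\alpha$, a contradiction. If $\beta'$ is a proper prefix of $\beta$, write $\beta=\beta'\beta''$ with $\beta''\ne 1_{X}$; peeling $\beta'$ off $\alpha\prec\beta'\gamma$ gives $\beta''\gamma\prec\gamma$, yet $\beta''\gamma$ is itself a proper right factor of $\alpha$, so minimality forces $\gamma\preceq\beta''\gamma$, a contradiction. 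Thus $\beta\in\mathcal{L}$, completing the equivalence and the standard factorization. I expect this last step, together with the unborderedness needed to pass to the suffix characterization, to be the only genuinely delicate points; the rest is case-checking with the lexicographic order.
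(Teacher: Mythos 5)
Your proposal is correct, but there is nothing in the paper to compare it against: the paper states this lemma as a quotation of \cite[Proposition 5.1.3]{L1997} and supplies no argument of its own, so your blind attempt is in effect a reconstruction of the textbook proof. Your route is the classical two-step one: first establish the suffix characterization ($\alpha\in\mathcal{L}$ if and only if $\alpha\prec s$ for every proper nonempty right factor $s$), which is Lothaire's Proposition 5.1.2, and then deduce both the equivalence and the standard factorization from it. All the steps check out: the two families of right factors in the ``if'' direction, the identification of the lexicographically smallest proper right factor with the longest Lyndon proper right factor, and the minimality argument forcing $\beta\in\mathcal{L}$ are all sound; that last argument is a genuinely clean way to organize matters, since proofs that start from the longest Lyndon suffix have to work harder at exactly that step, whereas your choice of $\gamma$ as the smallest suffix kills the case $\beta=\beta'\beta''$ in one line. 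The only thin spot is the parenthetical claim that strict minimality among rotations makes $\alpha$ unbordered, which carries the whole passage from the rotation definition to the suffix characterization; since you lean on it, write it out: if $s$ is a nonempty border, write $\alpha=ps=sq$ with $p,q$ nonempty; minimality for the factorization $p\cdot s$ gives $sq=\alpha\prec sp$, hence $q\prec p$ after cancelling $s$, while minimality for the factorization $s\cdot q$ gives $ps=\alpha\prec qs$, which forces $p\prec q$ because $|p|=|q|$ rules out both the prefix clause and equality --- a contradiction. With that short argument included, your proof is complete and self-contained, which is more than the paper attempts; what the paper's citation buys is brevity, and what your version buys is independence from \cite{L1997} at the cost of roughly a page.
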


For a composed word $\alpha=\beta\gamma\in \mathcal{L}\setminus X, \beta,\gamma\in \mathcal{L}$, the pair $(\beta,\gamma)$ is called the \textit{standard factorization} of $\alpha$ if $\gamma$ is of maximal length. Denote it by $\operatorname{st}(\alpha)$ or $(\alpha_{L},\alpha_{R})$.

The following lemma shows that every word can be uniquely decomposed as a product of Lyndon words in the non-increasing order.

\begin{lemma}{\cite[Theorem 5.1.5]{L1997}}\label{Lemma 2.4}
Every word $\alpha \in \X_{0}$ can be written uniquely as a non-increasing product of Lyndon words, that is,
\begin{align*}
\alpha=\beta_{1} \beta_{2} \cdots \beta_{n},
\end{align*}
where $\beta_{i} \in \mathcal{L}$, $1\le i\le n$ and $\beta_{1} \succeq \beta_{2} \succeq \ldots \succeq\beta_{n}$.
\end{lemma}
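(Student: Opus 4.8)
The plan is to prove the two assertions of the statement — existence and uniqueness of the non-increasing factorization — separately, inducting on the length $|\alpha|$ and leaning on Lemma~\ref{Lemma 2.3} for existence and on the minimality of Lyndon words among their suffixes for uniqueness.

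For \emph{existence}, I would start from the trivial factorization $\alpha=x_{1}x_{2}\cdots x_{m}$ into single letters; since $X\subseteq \mathcal{L}$, this is already a factorization into Lyndon words, just not necessarily non-increasing. I would then apply a greedy merging procedure: whenever two adjacent factors satisfy $\beta_{i}\prec \beta_{i+1}$, Lemma~\ref{Lemma 2.3} guarantees that their concatenation $\beta_{i}\beta_{i+1}$ is again a Lyndon word, so I replace the pair by this single factor. Each such step strictly decreases the number of factors, so iterating the procedure terminates; because $\prec$ is a total order, termination means that no ascent remains, i.e.\ the resulting factors satisfy $\beta_{1}\succeq \beta_{2}\succeq \cdots \succeq \beta_{n}$. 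This yields a non-increasing product of Lyndon words equal to $\alpha$.

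For \emph{uniqueness}, the key claim is that the last factor $\beta_{n}$ of any non-increasing Lyndon factorization of $\alpha$ is forced to be the lexicographically least nonempty suffix of $\alpha$, an object depending only on $\alpha$. To prove this I would use the standard property of Lyndon words (a consequence of the definition) that a Lyndon word is strictly smaller than each of its proper nonempty suffixes. Given an arbitrary nonempty suffix $s$ of $\alpha$, I would write it as $s=s'\beta_{i+1}\cdots \beta_{n}$ with $s'$ a nonempty suffix of some $\beta_{i}$, and compare $s$ with $\beta_{n}$ in cases: if $s$ is a proper suffix of $\beta_{n}$ then $\beta_{n}\prec s$ by the suffix property; otherwise $s'$ is either $\beta_{i}$ itself or a proper suffix of $\beta_{i}$, and in both cases $s'\succeq \beta_{i}\succeq \beta_{n}$, from which a short lexicographic comparison forces $\beta_{n}\prec s$. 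Hence $\beta_{n}$ is the unique minimal suffix of $\alpha$. Removing it gives a non-increasing Lyndon factorization $\beta_{1}\cdots \beta_{n-1}$ of the shorter word obtained from $\alpha$ by deleting its suffix $\beta_{n}$, and induction on $|\alpha|$ finishes the argument.

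I expect the main obstacle to be the case analysis in the uniqueness step, specifically verifying the strict inequality $\beta_{n}\prec s$ for every suffix $s$: the comparison must carefully handle whether $\beta_{n}$ is a prefix of $s'$ (or $s'$ of $\beta_{n}$) versus the two words first differing inside $s'$, and it relies essentially on the minimality of Lyndon words among their suffixes together with the non-increasing condition $\beta_{i}\succeq \beta_{n}$. Once this minimality characterization of $\beta_{n}$ is established, both the removability of the last factor and the inductive descent are routine.
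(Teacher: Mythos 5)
Your proposal is correct, but note that the paper itself does not prove this lemma at all: it is quoted verbatim from Lothaire \cite[Theorem 5.1.5]{L1997}, so the only meaningful comparison is with that cited proof. Your existence argument (greedy merging of adjacent ascents $\beta_i\prec\beta_{i+1}$ into the Lyndon word $\beta_i\beta_{i+1}$, justified by the ``if'' direction of Lemma~\ref{Lemma 2.3}, with termination because each merge drops the number of factors) is essentially Lothaire's. Your uniqueness argument takes a genuinely different, though equally standard, route: you characterize the \emph{last} factor $\beta_n$ as the lexicographically least nonempty suffix of $\alpha$ and then induct after deleting it, whereas Lothaire compares the \emph{first} factors of two competing factorizations directly --- if $|\beta_1|>|\gamma_1|$, then $\beta_1=\gamma_1\cdots\gamma_j v$ with $v$ a nonempty prefix of $\gamma_{j+1}$, and the chain $\beta_1\prec v\preceq\gamma_{j+1}\preceq\gamma_1\prec\beta_1$ (the first inequality by the suffix property, the last because $\gamma_1$ is a proper prefix of $\beta_1$) is a contradiction. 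Both arguments rest on the same key fact, that a Lyndon word is strictly smaller than all of its proper nonempty suffixes, which is indeed a consequence of the paper's rotation-style definition but deserves its own short proof (the case where the suffix is simultaneously a prefix must be excluded); Lothaire's route is a little shorter, while yours buys the intrinsic characterization of $\beta_n$ as the minimal suffix, which is useful in its own right. One small slip of phrasing: in your ``otherwise'' branch the conclusion should be $\beta_n\preceq s$ with equality exactly when $s=\beta_n$ (the case $i=n$, $s'=\beta_i$ lands there), not the strict inequality $\beta_n\prec s$; this does not affect the argument, since the minimal suffix is still unique.
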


The Lyndon-Shirshov basis was introduced by Chen--Fox--Lyndon \cite{CFL1958} and Shirshov \cite{S1958} for a basis of the free Lie algebra.

\begin{lemma}{\cite[Theorem 5.3.1]{L1997}}\label{Lemma 2.5}
Let $\mathfrak{L}(X)$ be the free Lie algebra over $\K$ generated by $X$ and define a map $E:\mathcal{L}\longrightarrow \mathfrak{L}(X)$ inductively by $E(x)=x$ if $x\in X$; and if $\alpha\in \mathcal{L}\setminus X$ and $\operatorname{st}(\alpha)=(\beta,\gamma)$, by
\begin{align*}
E(\alpha)=[E(\beta),E(\gamma)]:=E(\beta)E(\gamma)-E(\gamma)E(\beta).
\end{align*} 
Then $\mathfrak{L}(X)$ is a free $\K$-module with $E(\mathcal{L})$ as a basis, called the \textit{Lyndon-Shirshov basis}. Denote $E(\alpha)$ by $E_{\alpha}$ for convenience in computations.
\end{lemma}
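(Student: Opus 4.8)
The plan is to realize the free Lie algebra inside the free associative algebra, prove a triangularity (``leading word'') estimate for the elements $E_\alpha$, and then promote linear independence to a full basis using the Poincar\'e--Birkhoff--Witt theorem.

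First I would use the standard realization $\mathfrak{L}(X)\subseteq\K\langle X\rangle$ as the Lie subalgebra generated by $X$ under the commutator bracket, together with the classical identification $\K\langle X\rangle=U(\mathfrak{L}(X))$; this turns each $E_\alpha$ into an explicit element of $\K\langle X\rangle$. For $0\neq u\in\K\langle X\rangle$ let $\mathrm{LT}(u)$ denote the lexicographically smallest word occurring in $u$ with nonzero coefficient. The central step is the estimate: for every $\alpha\in\mathcal{L}$ the element $E_\alpha$ is homogeneous of multidegree equal to that of $\alpha$, and $\mathrm{LT}(E_\alpha)=\alpha$ with coefficient $1$, every other word being strictly $\succ\alpha$.

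I would prove this by induction on $|\alpha|$, the case $\alpha\in X$ being trivial. For $|\alpha|>1$ write $\st(\alpha)=(\alpha_L,\alpha_R)$; by Lemma~\ref{Lemma 2.3} both factors lie in $\mathcal{L}$, and since $\alpha$ is a Lyndon word its standard factorization satisfies $\alpha=\alpha_L\alpha_R\prec\alpha_R\alpha_L$. The elementary remark that, for homogeneous $u,v$, one has $\mathrm{LT}(uv)=\mathrm{LT}(u)\,\mathrm{LT}(v)$ (equal-length prefixes are compared lexicographically as words) combined with the inductive hypothesis gives $\mathrm{LT}(E_{\alpha_L}E_{\alpha_R})=\alpha_L\alpha_R=\alpha$ and $\mathrm{LT}(E_{\alpha_R}E_{\alpha_L})=\alpha_R\alpha_L\succ\alpha$. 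Hence in $E_\alpha=E_{\alpha_L}E_{\alpha_R}-E_{\alpha_R}E_{\alpha_L}$ the word $\alpha$ occurs only in the first summand, with coefficient $1$, while every other word is $\succ\alpha$ of the same multidegree, completing the induction.

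Granting the estimate, linear independence of $E(\mathcal{L})$ is immediate: in any relation $\sum_i c_i E_{\alpha_i}=0$ with the $\alpha_i\in\mathcal{L}$ distinct, the $\prec$-minimal $\alpha_i$ occurs with nonzero coefficient only in its own $E_{\alpha_i}$, forcing $c_i=0$, and one descends. For spanning I would first note that the ordered products $E_{\beta_1}\cdots E_{\beta_n}$ with $\beta_1\succeq\cdots\succeq\beta_n$ in $\mathcal{L}$ form a $\K$-basis of $\K\langle X\rangle$: by the same computation such a product has $\mathrm{LT}$ equal to the concatenation $\beta_1\cdots\beta_n$, and by Lemma~\ref{Lemma 2.4} these concatenations run exactly once over all of $\X$, so the change of basis to the word basis of $\K\langle X\rangle$ is unitriangular. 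Now suppose $E(\mathcal{L})$ failed to span $\mathfrak{L}(X)$ and extend it to a basis $E(\mathcal{L})\sqcup F$ of $\mathfrak{L}(X)$ with $F\neq\varnothing$. By PBW the ordered monomials in this enlarged basis are linearly independent in $U(\mathfrak{L}(X))=\K\langle X\rangle$; yet any $f\in F$, a degree-one such monomial, already lies in the linear span of the ordered products of $E(\mathcal{L})$, which span $\K\langle X\rangle$ --- contradicting independence. Hence $F=\varnothing$ and $E(\mathcal{L})$ is a basis of $\mathfrak{L}(X)$.

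The main obstacle is the triangularity estimate, and inside it the point that the unwanted product $E_{\alpha_R}E_{\alpha_L}$ contributes no word $\preceq\alpha$; this is precisely where the defining Lyndon inequality $\alpha_L\alpha_R\prec\alpha_R\alpha_L$ enters, and it must be coupled with the homogeneity of each $E_\beta$ in order to control $\mathrm{LT}$ of products. The second, more structural, difficulty is upgrading independence to spanning, which the PBW comparison above settles without any further computation.
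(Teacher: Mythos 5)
Your proof is correct. Note that the paper itself does not prove this lemma at all --- it is quoted verbatim from Lothaire \cite[Theorem 5.3.1]{L1997} --- so the comparison is really with the classical argument in the cited source, and your proposal reconstructs exactly that argument's skeleton: the triangularity estimate $E_\alpha=\alpha+(\text{lex-greater words of the same multidegree})$, proved by induction using $\alpha_L\alpha_R\prec\alpha_R\alpha_L$ and multiplicativity of leading terms for homogeneous elements; then the observation that, via Lemma~\ref{Lemma 2.4}, the non-increasing products $E_{\beta_1}\cdots E_{\beta_n}$ are unitriangular over the word basis and hence a basis of $\K\langle X\rangle$; and finally a PBW comparison to upgrade linear independence of $E(\mathcal{L})$ to spanning of $\mathfrak{L}(X)$. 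Where you differ from the textbook treatment is only in the last step: the usual route compares graded dimensions (Hilbert series) of $U(\mathfrak{L}(X))=\K\langle X\rangle$ computed once via PBW and once via the decreasing-product basis, whereas you extend $E(\mathcal{L})$ to a basis $E(\mathcal{L})\sqcup F$ of $\mathfrak{L}(X)$ and derive a contradiction with PBW independence; both are valid, and yours avoids generating functions at the cost of using that $\K$ is a field (the paper's standing assumption) so that independent sets extend to bases. Two small points you should make explicit: (i) the identification $U(\mathfrak{L}(X))=\K\langle X\rangle$ and the embedding $\mathfrak{L}(X)\subseteq\K\langle X\rangle$ follow from universal properties and general PBW, not from the statement being proved, so there is no circularity; (ii) in the contradiction step you must choose the total order on $E(\mathcal{L})\sqcup F$ so that its restriction to $E(\mathcal{L})$ is the reverse of $\prec$, since otherwise the decreasing products $E_{\beta_1}\cdots E_{\beta_n}$ with $\beta_1\succeq\cdots\succeq\beta_n$ need not be PBW monomials of the enlarged basis, and the claimed clash between ``spanning'' and ``independence'' would not literally be a contradiction. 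With those clarifications the argument is complete.
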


Let $(X,\prec)=(\{1,2\},1\prec 2)$. The following Lyndon words will be used often in the sequel.
\begin{align*}
E_{1} &=1, \quad\quad \quad  E_{2}=2, \\
E_{12} &=[E_{1},E_{2}]=12-21,\\
E_{112}&=[E_{1},E_{12}]=[1,[1,2]], \qquad E_{122}=[E_{12},E_{1}]=[[1,2],2],\\
E_{1112}&=[E_{1},E_{112}]=[1,[1,[1,2]]], \qquad E_{1122}=[E_{1},E_{122}]=[1,[[1,2],2]],\\
E_{1222}& =[E_{122},E_{2}]=[[[1,2],2],2].
\end{align*}
For a non-empty set $X$, the free algebra $\K\langle X\rangle$ is the universal enveloping algebra of $\mathfrak{L}(X)$ from \cite[Corollary 5.3.9]{L1997}. Thus $\K\langle X\rangle$ is a free $\K$-module with a PBW basis 
\begin{align}\label{PBW-basis-KX}
\{E_{\alpha_{1}}^{n_{\alpha_{1}}}E_{\alpha_{2}}^{n_{\alpha_{2}}}\cdots E_{\alpha_{m}}^{n_{\alpha_{m}}}|~\alpha_{1}\succ \alpha_{2}\succ \ldots \succ \alpha_{m}, ~\alpha_{i}\in \mathcal{L},~n_{\alpha_{i}}\ge 0,~m\ge 1\}.
\end{align}

\begin{definition}\label{Definition 2.8}
Let $\K\langle x,y\rangle$ be the free algebra generated by $\{x,y\}$. Then
\begin{align*}
(x+y)^n=\sum_{k=0}^{n}\mathcal{SH}_{k,n-k}(y,x),
\end{align*} 
where $\mathcal{SH}_{k,n-k}(y,x)$ is the sum of words $\alpha$ in $(x+y)^{n}$ such that $\alpha(y)=k$ and $\alpha(x)=n-k$, that is, $\mathcal{SH}_{k,n-k}(y,x)=y^{k}\shuffle x^{n-k}$, where $\shuffle$ is the shuffle product. We call it the $(k,n-k)$-\textit{shuffle type polynomials}.
\end{definition}

For instance, $\mathcal{SH}_{0,0}(y,x)=1$, $\mathcal{SH}_{0,1}(y,x)=x$, $\mathcal{SH}_{1,0}(y,x)=y$, $\mathcal{SH}_{1,1}(y,x)=y+x$. In general, $\mathcal{SH}_{i,j}(y,x)$ is recursively defined:
\begin{align}
\label{formula:recursion1-shuffle-poly} \mathcal{SH}_{i,j}(y,x) &= y\mathcal{SH}_{i-1,j}(y,x) + x\mathcal{SH}_{i,j-1}(y,x),\\
\label{formula:recursion2-shuffle-poly} \mathcal{SH}_{i,j}(y,x)&=\mathcal{SH}_{i-1,j}(y,x)y+\mathcal{SH}_{i,j-1}(y,x)x.
\end{align}
Moreover, $[x,\mathcal{SH}_{i,j-1}(y,x)]=[\mathcal{SH}_{i-1,j}(y,x),y]$, and for $k\le i$ and $k\le j$,
\begin{align*}
\mathcal{SH}_{i,j}(y,x)=\sum_{t=0}^{k}\mathcal{SH}_{k-t,t}(y,x)\mathcal{SH}_{i-k+t,j-t}(y,x).
\end{align*}

We mention that the formulas (\ref{formula:recursion1-shuffle-poly}) and (\ref{formula:recursion2-shuffle-poly}) have appeared in \cite[Section 4.2]{R1977}.\\

\subsection{Commutators of Lyndon-Shirshov basis}

\begin{lemma}{\cite[Proposition 5.1.4]{L1997}}\label{lem:alphazeta-is-Lyndon}
Let $\alpha\in \mathcal{L}\setminus X$ and $\operatorname{st}(\alpha)=(\alpha_{L},\alpha_{R})$. Then for any $\zeta\in \mathcal{L}$ such that $\alpha\prec \zeta$, the pair $(\alpha,\zeta)=\operatorname{st}(\alpha\zeta)$ if and only if $\alpha_{R}\succeq \gamma$.
\end{lemma}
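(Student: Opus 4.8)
The plan is to reduce the statement to the structural description of the right factor of a standard factorisation. Recall that for a Lyndon word $w$ the right factor of $\operatorname{st}(w)$ is by definition the proper Lyndon suffix of $w$ of maximal length; I will also use the standard companion fact that this factor is simultaneously the lexicographically smallest proper suffix of $w$. First I would note that $\alpha\zeta$ is itself Lyndon: since $\alpha,\zeta\in\mathcal{L}$ and $\alpha\prec\zeta$, Lemma~\ref{Lemma 2.3} applies directly. Thus $\operatorname{st}(\alpha\zeta)=(\beta,\gamma)$ is defined, with $\gamma$ the longest proper Lyndon suffix of $\alpha\zeta$ and $\beta$ the complementary prefix. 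Because $\zeta$ is already a proper Lyndon suffix of $\alpha\zeta$, we automatically have $|\gamma|\ge|\zeta|$, so any proper Lyndon suffix strictly longer than $\zeta$ must have the form $s\zeta$ with $s$ a nonempty proper suffix of $\alpha$. Hence the whole assertion reduces to the chain of equivalences: $\operatorname{st}(\alpha\zeta)=(\alpha,\zeta)$ iff $\gamma=\zeta$ iff no nonempty proper suffix $s$ of $\alpha$ makes $s\zeta$ a Lyndon word. (Here the condition in the statement is read as $\alpha_{R}\succeq\zeta$.)

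For the direction $\alpha_{R}\prec\zeta\Rightarrow\operatorname{st}(\alpha\zeta)\ne(\alpha,\zeta)$ I would simply exhibit such an $s$. Take $s=\alpha_{R}$: it is a nonempty proper suffix of $\alpha$ (proper because $\alpha_{L}$ is nonempty), it is Lyndon, and $\alpha_{R}\prec\zeta$, so Lemma~\ref{Lemma 2.3} gives that $\alpha_{R}\zeta$ is Lyndon. This is a proper Lyndon suffix of $\alpha\zeta$ strictly longer than $\zeta$, whence $\gamma\ne\zeta$. This half uses nothing beyond Lemma~\ref{Lemma 2.3}.

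For the converse I would assume $\alpha_{R}\succeq\zeta$ and argue by contradiction: suppose $s\zeta$ is Lyndon for some nonempty proper suffix $s$ of $\alpha$. A Lyndon word is strictly smaller than each of its proper suffixes, so $s\zeta\prec\zeta$. I would then establish the elementary comparison $s\zeta\prec\zeta\Rightarrow s\prec\zeta$ (equivalently $\zeta\preceq s\Rightarrow\zeta\prec s\zeta$), which is dispatched by the routine case split according to whether $s$ and $\zeta$ agree along their common prefix, again invoking that $\zeta$ precedes its proper suffixes. This yields $s\prec\zeta\preceq\alpha_{R}$, so $s\prec\alpha_{R}$ with $s$ a proper suffix of $\alpha$.

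The main obstacle, and the only structural input beyond Lemma~\ref{Lemma 2.3}, is the fact that $\alpha_{R}$ is the lexicographically smallest proper suffix of $\alpha$, i.e. every proper suffix $s$ of $\alpha$ satisfies $s\succeq\alpha_{R}$. Granting this, $s\prec\alpha_{R}$ is impossible, the contradiction is complete, and we conclude $\gamma=\zeta$, hence $\operatorname{st}(\alpha\zeta)=(\alpha,\zeta)$. This ``smallest proper suffix equals longest Lyndon suffix'' property is the standard complement to Lemma~\ref{Lemma 2.3}; proving it from scratch would require treating separately the suffixes contained in $\alpha_{R}$ and those overlapping $\alpha_{L}$, using that $\alpha_{L},\alpha_{R}\in\mathcal{L}$ with $\alpha_{L}\prec\alpha_{R}$ (the refined part of Lemma~\ref{Lemma 2.3}), so I would either cite it from Lothaire or insert it as a short preliminary sublemma before the present argument.
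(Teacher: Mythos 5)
Your proposal is correct. Note first that the paper offers no proof of this lemma at all: it is quoted verbatim from Lothaire \cite[Proposition 5.1.4]{L1997}, so there is no internal argument to compare against; what you have written is essentially a reconstruction of the standard (Lothaire-style) proof. Your reduction is sound: $\alpha\zeta$ is Lyndon by Lemma~\ref{Lemma 2.3}, the right factor $\gamma$ of $\operatorname{st}(\alpha\zeta)$ satisfies $|\gamma|\ge|\zeta|$, every longer proper suffix is of the form $s\zeta$ with $s$ a nonempty proper suffix of $\alpha$, and so the claim becomes the non-existence of a Lyndon word among these $s\zeta$. The forward direction via $s=\alpha_{R}$ is exactly right (and, under the paper's definition of standard factorization, the complementary prefix $\alpha_{L}$ is Lyndon, so $(\alpha_{L},\alpha_{R}\zeta)$ is a genuine competing factorization). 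You also correctly read the statement's typo ``$\alpha_{R}\succeq\gamma$'' as $\alpha_{R}\succeq\zeta$. Two remarks on economy: your case-split establishing $s\zeta\prec\zeta\Rightarrow s\prec\zeta$ can be collapsed to one line, since $s$ is a proper prefix of $s\zeta$, so $\zeta\preceq s$ would give $\zeta\preceq s\prec s\zeta$, contradicting $s\zeta\prec\zeta$ directly; and you are right that the only substantive inputs beyond Lemma~\ref{Lemma 2.3} are the two standard suffix facts (a Lyndon word precedes all its proper suffixes, and $\alpha_{R}$ is the lexicographically smallest proper suffix of $\alpha$), neither of which is stated in the paper, so flagging them for citation from Lothaire or for a short preliminary sublemma is the appropriate resolution rather than a gap.
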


The following lemma concerning the commutators of the Lyndon-Shirshov basis is the combination of Lothaire {\cite[Lemma 5.5.3]{L1997}}, Kharchenko {\cite[Lemma 6]{K1999}} and Rosso {\cite[Theorem 1]{R1999}}. 

\begin{lemma}\label{lem:commutators}
Let $(X,\preceq)$ be a totally ordered set, $\mathcal{L}$ the set of Lyndon words in $\langle X \rangle$. For $\alpha,\beta\in \mathcal{L}$, $\alpha\prec \beta$, and $\operatorname{st}(\alpha)=(\alpha_{L},\alpha_{R})$ if $\alpha\in \mathcal{L}\setminus X$, the following hold:\\
$\mathrm{(a)}$ If $\alpha\in X$ or $\alpha_{R}\succeq \beta$, then $[E_{\alpha}, E_{\beta}]=E_{\alpha\beta}$;\\
$\mathrm{(b)}$ If $\alpha_{R} \prec \beta$, then there exits a finite subset $\Gamma \subsetneq \mathcal{L}$ such that $\alpha\beta\in \Gamma$ and
\begin{align}
[E_{\alpha}, E_{\beta}]=\sum_{\gamma\in \Gamma}k_{\gamma}E_{\gamma},
\end{align}
where for every $\gamma\in \Gamma$, $k_{\gamma}\in \Z$, $k_{\alpha\beta}\neq 0$, $\alpha\beta\preceq \gamma\prec \beta$ and $ \gamma(x)=\alpha(x)+\beta(x)~for~all~x\in X$.
\end{lemma}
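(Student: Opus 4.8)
The plan is to handle the two cases separately: case (a) follows immediately from the inductive definition of the Lyndon–Shirshov basis, while case (b) requires a leading–word argument for the lower bound together with a well-founded induction for the upper bound. Throughout I write $\mathrm{c}(w)=(w(x))_{x\in X}$ for the content of a word $w$, and I recall the standard triangularity of the Lyndon–Shirshov basis: for a Lyndon word $w$ the Lie element $E_w$, expanded in the monomial basis of $\K\langle X\rangle$, has the form $E_w=w+\sum_{u\succ w}c_u u$, where every monomial $u$ has content $\mathrm{c}(w)$ and satisfies $u\succ w$ (so $w$ is the lexicographically least monomial of $E_w$, with coefficient $1$). Since brackets preserve content, expanding any Lie element $P=\sum_\gamma k_\gamma E_\gamma$ shows that the least monomial of $P$ equals the least Lyndon word $\gamma$ with $k_\gamma\neq 0$, occurring with coefficient $k_\gamma$; this observation is used repeatedly.

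For (a), note first that by Lemma~\ref{Lemma 2.3} the word $\alpha\beta$ is Lyndon, since $\alpha,\beta\in\mathcal{L}$ and $\alpha\prec\beta$. If $\alpha\in X$, then the longest proper right factor of $\alpha\beta$ is $\beta$, whence $\operatorname{st}(\alpha\beta)=(\alpha,\beta)$; if $\alpha\in\mathcal{L}\setminus X$ and $\alpha_{R}\succeq\beta$, then Lemma~\ref{lem:alphazeta-is-Lyndon} again gives $\operatorname{st}(\alpha\beta)=(\alpha,\beta)$. In either case the inductive definition of $E$ in Lemma~\ref{Lemma 2.5} yields $[E_\alpha,E_\beta]=E_{\alpha\beta}$.

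For (b), I first settle the content condition and the lower bound. As brackets are multihomogeneous, writing $[E_\alpha,E_\beta]=\sum_\gamma k_\gamma E_\gamma$ only $\gamma$ with $\mathrm{c}(\gamma)=\mathrm{c}(\alpha)+\mathrm{c}(\beta)$ can occur; since there are finitely many words of a fixed content, $\Gamma=\{\gamma:k_\gamma\neq 0\}$ is finite, hence properly contained in $\mathcal{L}$, and every $\gamma\in\Gamma$ satisfies $\gamma(x)=\alpha(x)+\beta(x)$. For the lower bound I compute the least monomial of $[E_\alpha,E_\beta]=E_\alpha E_\beta-E_\beta E_\alpha$. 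Using the triangular form above and comparing lengths ($|u|=|\alpha|$, $|v|=|\beta|$ for the contributing factors), every monomial of $E_\alpha E_\beta$ is $\succeq\alpha\beta$, with equality only for the term $\alpha\cdot\beta$ (coefficient $1$), while every monomial of $E_\beta E_\alpha$ is $\succeq\beta\alpha$. A short prefix/lexicographic argument, using that $\beta$ is Lyndon (splitting according to whether or not $\alpha$ is a prefix of $\beta$), shows $\alpha\beta\prec\beta\alpha$; hence the least monomial of $[E_\alpha,E_\beta]$ is $\alpha\beta$ with coefficient $1$. By the observation in the first paragraph this forces $\alpha\beta$ to be the least element of $\Gamma$, so $\alpha\beta\preceq\gamma$ for all $\gamma\in\Gamma$ and $k_{\alpha\beta}=1\neq 0$ (recall $\alpha\beta\in\mathcal{L}$ by Lemma~\ref{Lemma 2.3}).

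It remains to prove the upper bound $\gamma\prec\beta$, which I expect to be the main obstacle: a naive induction on $|\alpha|+|\beta|$ fails, because the Jacobi expansion produces outer brackets of the \emph{same} total length. Instead I will induct on the pair $(|\alpha|+|\beta|,\beta)$ ordered lexicographically (length first, then $\beta$ in the lexicographic order on the finitely many words of the relevant content), which is well-founded. The base case is exactly case (a), where the single output $\gamma=\alpha\beta\prec\beta$ because $\beta$ is a proper suffix of the Lyndon word $\alpha\beta$. In case (b) write $\operatorname{st}(\alpha)=(\alpha_L,\alpha_R)$ and apply Jacobi,
\begin{equation*}
[E_\alpha,E_\beta]=[E_{\alpha_L},[E_{\alpha_R},E_\beta]]-[E_{\alpha_R},[E_{\alpha_L},E_\beta]].
\end{equation*}
The inner brackets $[E_{\alpha_R},E_\beta]$ and $[E_{\alpha_L},E_\beta]$ have strictly smaller total length, so by induction each is a $\Z$-combination of basis elements $E_\delta$ with $\delta\prec\beta$ (and, in the first case, $\delta\succeq\alpha_R\beta\succ\alpha_R\succ\alpha_L$). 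Re-bracketing each such $E_\delta$ with $E_{\alpha_L}$ or $E_{\alpha_R}$ produces brackets of the same total length $|\alpha|+|\beta|$ whose larger argument (after reordering the two arguments into increasing order, using $\alpha_L\prec\alpha\prec\alpha_R\prec\beta$ together with $\delta\prec\beta$) is again $\prec\beta$; each such bracket is therefore strictly smaller in the chosen order, so the induction hypothesis applies and outputs only Lyndon words $\prec\beta$. Collecting terms gives $[E_\alpha,E_\beta]=\sum_\gamma k_\gamma E_\gamma$ with every $\gamma\prec\beta$. The delicate point, requiring the most care, is verifying that after reordering the two arguments of every re-bracket the secondary parameter strictly decreases in each summand; this is precisely where the inequalities among $\alpha_L,\alpha_R,\beta,\delta$ and Lemma~\ref{lem:alphazeta-is-Lyndon} must be invoked exactly.
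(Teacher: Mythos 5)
Your proof is correct, but it takes a genuinely different route from the paper, whose ``proof'' of this lemma is purely bibliographical: part (a) is attributed to Lemma~\ref{lem:alphazeta-is-Lyndon} (exactly as you argue, except that you rightly treat $\alpha\in X$ separately, since that lemma assumes $\alpha\in\mathcal{L}\setminus X$), and part (b) is simply cited from Lothaire \cite[Lemma 5.5.3]{L1997}, Kharchenko \cite[Lemma 6]{K1999} and Rosso \cite[Theorem 1]{R1999}, with no argument given. Your part (b) is self-contained modulo one standard fact, the triangularity $E_w=w+\sum_{u\succ w}c_u u$ over words $u$ of the same content, which is \cite[Lemma 5.3.2]{L1997}: from it you get the lower bound and $k_{\alpha\beta}=1$ by a least-monomial computation (using $\alpha\beta\prec\beta\alpha$), and the upper bound $\gamma\prec\beta$ by Jacobi applied to $E_{\alpha}=[E_{\alpha_L},E_{\alpha_R}]$ together with a well-founded induction on the pair (total length, larger argument). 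The ``delicate point'' you flag at the end does go through exactly as you set it up: in $[E_{\alpha_L},[E_{\alpha_R},E_{\beta}]]$ every inner output $E_\delta$ has $\delta\succeq\alpha_R\beta\succ\alpha_R\succ\alpha_L$, so the re-bracket $[E_{\alpha_L},E_\delta]$ is already ordered with larger argument $\delta\prec\beta$; in $[E_{\alpha_R},[E_{\alpha_L},E_\beta]]$ either $\delta\succ\alpha_R$ (larger argument $\delta\prec\beta$), or $\delta\prec\alpha_R$ (after anticommuting, larger argument $\alpha_R\prec\beta$ by the case-(b) hypothesis), or $\delta=\alpha_R$ (the bracket vanishes); in every case the secondary parameter strictly drops, so the induction closes. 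Two small points to tighten: the secondary parameter does not range over ``words of the relevant content'' (your $\delta$'s have the content of $\alpha_R\beta$ or $\alpha_L\beta$, not of $\beta$) but over the finite set of words whose content is dominated by that of $\alpha\beta$, which is what makes the order well-founded; and the integrality $k_\gamma\in\mathbb{Z}$, which is part of the statement, should be recorded as an output of the same induction (the base case has coefficient $1$ and all rewritings are $\mathbb{Z}$-linear) or quoted from the $\mathbb{Z}$-form of Lemma~\ref{Lemma 2.5}. As for what each approach buys: yours puts an actual proof inside the paper and even sharpens the statement to $k_{\alpha\beta}=1$; the paper's citations defer to references where the result is established in greater generality (Kharchenko and Rosso prove quantum analogues), at the cost of leaving the combinatorial content opaque.
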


\begin{proof}
Part (a) follows by Lemma \ref{lem:alphazeta-is-Lyndon}. Part (b) follows from {\cite[Lemma 5.5.3]{L1997}}, {\cite[Lemma 6]{K1999}} and {\cite[Theorem 1]{R1999}}.
\end{proof}

More precisely, we can prove the following result for Part (b) in Lemma \ref{lem:commutators}.

If $\alpha_{R} \prec \beta$, then there exists a finite subset
$\Gamma \subsetneq \mathcal{L}$ such that 
\begin{equation}\label{formula-commutators-LSbasis}
[E_{\alpha}, E_{\beta}]=E_{\alpha\beta}+\sum_{\gamma\in \Gamma}k_{\gamma}E_{\gamma},
\end{equation}
where $k_\gamma\in \mathbb{Z}$,  $\alpha_{R}\preceq (\alpha\beta)_{R}\prec \beta$, and  for every $\gamma\in \Gamma$, $\gamma$ satisfies: $\alpha\beta \prec \gamma \prec \beta$,  $\alpha_{R}\preceq \gamma_{R}\prec \beta $ and $ \gamma(x)=\alpha(x)+\beta(x)$ for all $x\in X$.

For example, let $(X,\preceq)=(\{1,2\},1\prec 2)$. Note that $\st(112)=(1,12)$, $\st(1112)=(1,112)$ and $112\prec 12\prec 2$. Thus 
\begin{align*}
[E_{112},E_{2}] &=[[E_{1},E_{12}],E_{2}]=[[E_{1},E_{2}],E_{12}]+[E_{1},[E_{12},E_{2}]]=[E_{1},E_{122}]=E_{1122},\\
[E_{1112},E_{2}] &=[[E_{1},E_{112}],E_{2}]=[[E_{1},E_{2}],E_{112}]+[E_{1},[E_{112},E_{2}]]\\
&=-[E_{112},E_{12}]+[E_{1},E_{1122}]=E_{11122}-E_{11212},\\
[E_{1122},E_{2}] &=[[E_{1},E_{122}],E_{2}] =[[E_{1},E_{2}],E_{122}]+[E_{1},[E_{122},E_{2}]]\\
&=[E_{12},E_{122}]+[E_{1},E_{1222}]=E_{12122}+E_{11222}.
\end{align*}

The following corollary follows from Lemma \ref{lem:alphazeta-is-Lyndon} and Lemma \ref{lem:commutators}.

\begin{corollary} \label{cor:commutator-LS-basis}
The following hold:
\begin{itemize}
\item [(a)] Let $x\in X$ and $\beta\in \mathcal{L}$. If $x\prec \beta$, then 
\begin{align*}
E_{x}E_{\beta}^{b}=\sum_{c=0}^{b}\binom{b}{c}E_{\beta}^{b-c}E_{x\underbrace{\beta\cdots\beta}_{c}}, \quad b\ge 0.
\end{align*} 
\item [(b)] Let $b_{i}\ge 0$ and $x, \beta_{i}\in \mathcal{L}$ for $1\le i \le n$. If $\beta_{1}\succ \ldots \succ \beta_{k}\succ x\succeq \beta_{k+1}\succ \ldots \succ \beta_{n}$, then  
\begin{align}\label{formula:commutators-x-LSbasis}
E_{x}E_{\beta_{1}}^{b_{1}}\cdots E_{\beta_{n}}^{b_{n}}=\sum_{0\le c_{i}\le b_{i}\atop 1\le i\le k}^{}\binom{b_{1}}{c_{1}}\ldots \binom{b_{k}}{c_{k}}E_{\beta_{1}}^{b_{1}-c_{1}}\cdots E_{\beta_{k}}^{b_{k}-c_{k}}E_{x\underbrace{\beta_{1}\cdots \beta_{1}}_{c_{1}}\cdots \underbrace{\beta_{k}\cdots \beta_{k}}_{c_{k}}}E_{\beta_{k+1}}^{b_{k+1}}\cdots E_{\beta_{n}}^{b_{n}}.
\end{align}
\item [(c)] Let $\alpha,\beta_{1},\beta_{2},\ldots,\beta_{n}\in \mathcal{L}$ and $\beta_{1}\succ \ldots \succ \beta_{k} \succ  \alpha\succeq \beta_{k+1}\succ \ldots \succ \beta_{n}$. Then there exists a finite subset $\Gamma\subsetneq \mathcal{L}$ such that
\begin{align*}
E_{\alpha}E_{\beta_{1}}^{b_{1}} \cdots E_{\beta_{n}}^{b_{n}}= E_{\beta_{1}}^{{b_{1}}}\cdots E_{\beta_{k}}^{b_{k}}E_{\alpha}E_{\beta_{k+1}}^{b_{k+1}}\cdots E_{\beta_{n}}^{b_{n}}+\sum_{\gamma\in \Gamma}^{} k_{\gamma} E_{\beta_{1}}^{c_{1}}\cdots E_{\beta_{\ell}}^{c_{\ell}}E_{\gamma}E_{\beta_{\ell+1}}^{b_{\ell+1}}\cdots E_{\beta_{k+1}}^{b_{k+1}}\cdots E_{\beta_{n}}^{b_{n}},
\end{align*}  
where for every $\gamma$, $k_{\gamma}\in \Z$ and there exists some $\ell$ such that $1\le \ell \le k$, $\beta_{\ell}\succ \gamma\succeq \beta_{\ell+1}$, $0\le c_{s}\le b_{s}$ for $1\le s \le \ell$, $\sum_{s=1}^{\ell}(b_{s}-c_{s})>0$ and $\gamma(x)=\alpha(x)+\sum_{s=1}^{\ell}(b_{s}-c_{s})\beta_{s}(x)$ for $x\in X$.
\item [(d)] Let $\beta,\alpha_{1},\alpha_{2},\ldots,\alpha_{m}\in \mathcal{L}$ and $\alpha_{1}\succ \ldots \succ \alpha_{k} \succeq  \beta \succ\alpha_{k+1}\succ \ldots \succ \alpha_{m}$. 
Then there exists a finite subset $\Gamma\subsetneq \mathcal{L}$ such that 
\begin{align*}
E_{\alpha_{1}}^{a_{1}}E_{\alpha_{2}}^{a_{2}}\cdots E_{\alpha_{m}}^{a_{m}}E_{\beta}= E_{\alpha_{1}}^{{a_{1}}}\cdots E_{\alpha_{k}}^{a_{k}}E_{\beta}E_{\alpha_{k+1}}^{a_{k+1}}\cdots E_{\alpha_{m}}^{a_{m}}+\sum_{\gamma\in \Gamma}^{} k_{\gamma} E_{\alpha_{1}}^{{a_{1}}}\cdots E_{\alpha_{k}}^{a_{k}}\cdots E_{\alpha_{\ell}}^{a_{\ell}}E_{\gamma}E_{\alpha_{\ell+1}}^{c_{\ell+1}}\cdots E_{\alpha_{m}}^{c_{m}},
\end{align*}
where for every $\gamma\in \Gamma$, $k_{\gamma}\in \mathbb{Z}$, there exists some $\ell$ such that $k\le \ell \le m-1$, $\alpha_{\ell}\succeq \gamma\succ \alpha_{\ell+1},~0\le c_{s}\le a_{s}$ for $\ell+1\le s \le m$, $\sum_{s=\ell+1}^{m}(a_{s}-c_{s})>0$ and $\gamma(x)=\beta(x)+\sum_{s=\ell+1}^{m}(a_{s}-c_{s})\alpha_{s}(x)$ for all $x\in X$.
\end{itemize}
\end{corollary}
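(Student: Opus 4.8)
My plan is to treat all four parts by a single mechanism: push one Lyndon--Shirshov generator through the PBW monomial $E_{\beta_1}^{b_1}\cdots E_{\beta_n}^{b_n}$ of \eqref{PBW-basis-KX} one block at a time --- to the right in (a)--(c), to the left in (d) --- rewriting every commutator that occurs by Lemma~\ref{lem:commutators} and using Lemma~\ref{lem:alphazeta-is-Lyndon} to read off the standard factorizations that decide whether a given commutator is clean (case (a) of the lemma) or carries error terms (case (b)). The one structural fact I would lean on throughout is that the multidegree $\gamma\mapsto(\gamma(x))_{x\in X}$ is additive under commutators: by Lemma~\ref{lem:commutators} every $E_\gamma$ appearing in $[E_\alpha,E_\beta]$ satisfies $\gamma(x)=\alpha(x)+\beta(x)$, so the whole computation stays inside one fixed, finite-dimensional multidegree component of $\K\langle X\rangle$. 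This immediately yields the weight identities asserted for $\gamma$ in (c) and (d) and underpins the termination argument.

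For (a) I would first isolate the clean step $[E_{x\beta^{c}},E_\beta]=E_{x\beta^{c+1}}$ for all $c\ge 0$. Since $x\prec\beta$ with $x,\beta\in\mathcal{L}$, each $x\beta^{c}$ is Lyndon by Lemma~\ref{Lemma 2.3}, and a short induction on $c$ via Lemma~\ref{lem:alphazeta-is-Lyndon} (applied with $\alpha=x\beta^{c-1}$, $\zeta=\beta$, where $\alpha_R=\beta\succeq\beta=\zeta$; the case $c=1$ being immediate) shows $\operatorname{st}(x\beta^{c})=(x\beta^{c-1},\beta)$, so $(x\beta^{c})_R=\beta\succeq\beta$ and Lemma~\ref{lem:commutators}(a) gives the asserted clean value. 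Writing $u_c=E_{x\beta^{c}}$ and $v=E_\beta$, the relation $u_c v=vu_c+u_{c+1}$ then gives $u_0v^{b}=\sum_{c=0}^{b}\binom{b}{c}v^{b-c}u_c$ by the standard induction on $b$ using Pascal's rule, which is exactly the claimed formula.

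Part (b) iterates (a) across the blocks $\beta_1\succ\cdots\succ\beta_k$ lying above $x$: having pushed $x$ through $\beta_1^{b_1}\cdots\beta_i^{b_i}$, the generator in hand is $E_{x\beta_1^{c_1}\cdots\beta_i^{c_i}}$, whose right factor is the most recently appended block, so $(x\beta_1^{c_1}\cdots\beta_i^{c_i})_R\succeq\beta_{i+1}$ and the commutator with $\beta_{i+1}$ is clean by Lemma~\ref{lem:commutators}(a) whenever this word is Lyndon and $\prec\beta_{i+1}$; multiplying the per-block binomial coefficients then produces the stated product (the degenerate index-tuples, where the word fails to be Lyndon, correspond to vanishing commutators and drop out), with $E_x$ halting at position $k+1$ because $x\succeq\beta_{k+1}$. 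For (c) the same right-sweep is run for a general Lyndon $\alpha$, but a commutator may now fall under Lemma~\ref{lem:commutators}(b), contributing $E_{\alpha\beta_i}$ together with error terms $E_\gamma$, $\alpha\beta_i\prec\gamma\prec\beta_i$, each of which is re-fed into the sweep. I would organise this as an induction on the number of block-factors still to the right of the travelling generator: the strict bounds $\alpha\beta_i\prec\gamma\prec\beta_i$ (and those of the iterated commutators) place each $\gamma$ between two consecutive blocks, supplying the index $\ell$ with $\beta_\ell\succ\gamma\succeq\beta_{\ell+1}$, forcing $\sum_{s=1}^{\ell}(b_s-c_s)>0$, and --- via additivity of multidegree --- giving $\gamma(x)=\alpha(x)+\sum_{s=1}^{\ell}(b_s-c_s)\beta_s(x)$. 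Part (d) is the mirror left-sweep, obtained by applying the same Lemma~\ref{lem:commutators} to $[E_{\alpha_s},E_\beta]$ for the blocks $\alpha_s\prec\beta$ that $E_\beta$ must pass, and yields the symmetric statement with the dual constraints.

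The crux, and the main obstacle, is the bookkeeping and termination in (c) and (d): after every rewrite one must renormalise to the PBW basis \eqref{PBW-basis-KX} and keep each freshly spawned $E_\gamma$ simultaneously consistent with all of its constraints (the insertion index $\ell$, the ranges $0\le c_s\le b_s$, the strict inequality $\sum(b_s-c_s)>0$, and the weight identity). Termination cannot come from a drop in total degree, which is fixed; the correct measure is the number of block-factors the travelling generator must still cross, which strictly decreases at each commutation because every error term satisfies $\gamma\prec\beta_i$ and is reinserted strictly further right (left, for (d)). A related subtlety, already latent in (a) and (b), is that one must verify the manufactured words are genuinely Lyndon and correctly ordered relative to the remaining blocks before invoking Lemma~\ref{lem:commutators}(a); the degenerate index-tuples for which this fails correspond to vanishing commutators and drop out of the sums, and checking that this happens exactly as the stated formulas predict is where the argument demands the most care.
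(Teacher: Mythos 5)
Your part (a) is correct and complete: Lyndon-ness of $x\beta^{c}$, the standard factorization $\operatorname{st}(x\beta^{c})=(x\beta^{c-1},\beta)$ from Lemma~\ref{lem:alphazeta-is-Lyndon}, the clean commutator from Lemma~\ref{lem:commutators}(a), and the Pascal induction are exactly what is needed; and the outlines of (c) and (d) (sweep, re-feed the error terms of Lemma~\ref{lem:commutators}(b), multidegree additivity, termination on the number of factors left to cross) are structurally sound. Since the paper itself offers no proof beyond citing Lemmas~\ref{lem:alphazeta-is-Lyndon} and \ref{lem:commutators}, this is the right kind of argument to supply.

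The genuine gap is in part (b), in the claim you invoke twice: that the degenerate index-tuples, where $x\beta_{1}^{c_{1}}\cdots\beta_{k}^{c_{k}}$ fails to be Lyndon, ``correspond to vanishing commutators and drop out.'' That is false. Take $x=1$, $\beta_{1}=2$, $\beta_{2}=112$, $b_{1}=b_{2}=1$, so $\beta_{1}\succ\beta_{2}\succ x$ as (b) requires. The tuple $(c_{1},c_{2})=(1,1)$ produces the non-Lyndon word $1\cdot 2\cdot 112=12112$, but the element your sweep creates at that point is $[E_{12},E_{112}]=-E_{11212}\neq 0$: indeed $\operatorname{st}(112)=(1,12)$ has right factor $12\succeq 12$, so Lemma~\ref{lem:commutators}(a) gives $[E_{112},E_{12}]=E_{11212}$. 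Expanding directly,
\[
E_{1}E_{2}E_{112}=E_{2}E_{112}E_{1}+E_{2}E_{1112}+E_{112}E_{12}-E_{11212},
\]
whereas your convention (discard the $(1,1)$ term) yields $E_{2}E_{112}E_{1}+E_{2}E_{1112}+E_{112}E_{12}$, which is wrong by $E_{11212}$. So with non-Lyndon terms set to zero, the identity you set out to prove is not an identity, and no bookkeeping can repair it. (The same phenomenon already hits the first block when $x$ is a non-letter Lyndon word, which (b) permits: by the paper's own computation $[E_{1122},E_{2}]=E_{12122}+E_{11222}$, so $E_{1122}E_{2}\neq E_{2}E_{1122}+E_{11222}$.) Formula~(\ref{formula:commutators-x-LSbasis}) is a true identity only if one reads $E_{x\beta_{1}^{c_{1}}\cdots\beta_{k}^{c_{k}}}$ as the iterated bracket $\operatorname{ad}_{E_{\beta_{k}}}^{c_{k}}\cdots\operatorname{ad}_{E_{\beta_{1}}}^{c_{1}}(E_{x})$, where $\operatorname{ad}_{v}(u):=[u,v]$. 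Under that reading, (a) and (b) are instances of the purely associative identity $uv^{b}=\sum_{c=0}^{b}\binom{b}{c}v^{b-c}\operatorname{ad}_{v}^{c}(u)$ applied block by block --- exactly your Pascal induction, valid for arbitrary $u,v$ --- and the Lyndon machinery of Lemmas~\ref{lem:alphazeta-is-Lyndon} and \ref{lem:commutators}(a) is needed only to recognize which of these brackets are genuine basis elements $E_{\gamma}$ (all of them in (a), not all of them in (b)). The missing idea in your proposal is this interpretation of the symbols; the vanishing argument that stands in for it is incorrect.
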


\section{Noncommutative binomial theorem given by Lyndon-Shirshov basis}\label{Section 2}

In this section, we work with the totally ordered set $X=\{1,2\}$ with the normal order $1\prec 2$.

By Formulas (\ref{PBW-basis-KX}), (\ref{formula:recursion1-shuffle-poly}) and (\ref{formula:commutators-x-LSbasis}), $\mathcal{SH}_{i,j}(E_{2},E_{1})$ can be inductively rewritten as a linear combination of products of the Lyndon-Shirshov basis elements in lexicographic order, that is,
\begin{align*}
\mathcal{SH}_{i,j}(E_{2},E_{1})=\sum_{}^{} C_{E_{\alpha_{1}}^{t_{\alpha_{1}}} \cdots E_{\alpha_{n}}^{t_{\alpha_{n}}}} E_{\alpha_{1}}^{t_{\alpha_{1}}} \cdots E_{\alpha_{n}}^{t_{\alpha_{n}}},
\end{align*}
where $\alpha_{k}\in \mathcal{L},~ t_{\alpha_{k}}\in \mathbb{N}$,
$1\le k\le n, ~\alpha_{1}\succ \ldots\succ \alpha_{n}$, $i=\sum_{k=0}^{n}t_{\alpha_{k}}\alpha_{k}(2)$ and $j=\sum_{k=0}^{n}t_{\alpha_{k}}\alpha_{k}(1)$. 
Now we give the following formula for the coefficient $C_{E_{\alpha_{1}}^{t_{\alpha_{1}}}\cdots E_{\alpha_{n}}^{t_{\alpha_{n}}}}$.

\begin{theorem}\label{thm:noncom-binom-coeff-LSbasis} 
Suppose that $E_{\alpha_{1}}^{t_{\alpha_{1}}}\cdots E_{\alpha_{n}}^{t_{\alpha_{n}}}$ is a term of $\mathcal{SH}_{i,j}(E_{2},E_{1})$, where $\alpha_{k}\in \mathcal{L}_{X}$, $t_{\alpha_{k}}\in \mathbb{N}$, $1\le k\le n $, $\alpha_{1}\succ \ldots\succ \alpha_{n}$, $i=\sum_{k=0}^{n}t_{\alpha_{k}}\alpha_{k}(2)$ and $j=\sum_{k=0}^{n}t_{\alpha_{k}}\alpha_{k}(1)$. Then the coefficient of $E_{\alpha_{1}}^{t_{\alpha_{1}}}E_{\alpha_{2}}^{t_{\alpha_{2}}}\cdots E_{\alpha_{n}}^{t_{\alpha_{n}}}$ in $\mathcal{SH}_{i,j}(E_{2},E_{1})$ is given by
\begin{equation}\label{j-z formula}
C_{E_{\alpha_{1}}^{t_{\alpha_{1}}} \cdots E_{\alpha_{n}}^{t_{\alpha_{n}}}}=\frac{(i+j)!}{\prod_{k=1}^{n}(|\alpha_{k}|!)^{t_{\alpha_{k}}}t_{\alpha_{k}}!}\prod_{k=1}^{n}C_{E_{\alpha_{k}}}^{t_{\alpha_{k}}},
\end{equation}
where $i+j=\sum_{k=1}^{n}|\alpha_{k}|t_{\alpha_{k}}$ and $C_{E_{\alpha_{k}}}$ is the coefficient of the term $E_{\alpha_{k}}$ in $\mathcal{SH}_{\alpha_{k}(2),\alpha_{k}(1)}(E_{2},E_{1})$.
\end{theorem}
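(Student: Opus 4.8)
The plan is to bypass the inductive bookkeeping entirely and instead read the coefficient $C_{E_{\alpha_{1}}^{t_{\alpha_{1}}} \cdots E_{\alpha_{n}}^{t_{\alpha_{n}}}}$ off the duality between the concatenation algebra $\K\langle X\rangle$ and the shuffle algebra. Write $\mathbf{w}=\alpha_{1}^{t_{\alpha_{1}}}\cdots\alpha_{n}^{t_{\alpha_{n}}}$ for the word whose non-increasing Lyndon factorization (Lemma \ref{Lemma 2.4}) yields the PBW monomial $E_{\mathbf{w}}:=E_{\alpha_{1}}^{t_{\alpha_{1}}}\cdots E_{\alpha_{n}}^{t_{\alpha_{n}}}$. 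Equip $\K\langle X\rangle$ with the pairing $\langle u,v\rangle=\delta_{u,v}$ on words. Under this pairing the PBW basis $\{E_{\mathbf{w}}\}$ of the concatenation algebra is dual to a basis $\{S_{\mathbf{w}}\}$ of the shuffle algebra $(\K\langle X\rangle,\shuffle)$; here each $S_{\ell}$ for $\ell\in\mathcal{L}$ is the shuffle-dual of the Lyndon bracketing $E_{\ell}$, the $S_{\ell}$ are free polynomial generators for the shuffle algebra, and one has the factorization $S_{\mathbf{w}}=\frac{1}{\prod_{k} t_{\alpha_{k}}!}\,S_{\alpha_{1}}^{\shuffle t_{\alpha_{1}}}\shuffle\cdots\shuffle S_{\alpha_{n}}^{\shuffle t_{\alpha_{n}}}$ (Radford; Reutenauer, \emph{Free Lie Algebras}, Ch.~5--6). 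This factorization is precisely the structural source of the factorials in (\ref{j-z formula}).

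The first step is to realize the desired coefficient as a single pairing. Since $\mathcal{SH}_{i,j}(E_{2},E_{1})$ is by Definition \ref{Definition 2.8} exactly the sum $\sum_{v}v$ of all words $v$ of bidegree $(i,j)=(\#2,\#1)$, each with coefficient $1$, duality gives
\[
C_{E_{\mathbf{w}}}=\langle \mathcal{SH}_{i,j}(E_{2},E_{1}),\,S_{\mathbf{w}}\rangle=\sum_{v}\langle v,\,S_{\mathbf{w}}\rangle=\varepsilon^{*}(S_{\mathbf{w}}),
\]
where $\varepsilon^{*}(g)$ denotes the sum of all coefficients of $g$ in the word basis; the last equality uses that $S_{\mathbf{w}}$ is homogeneous of bidegree $(i,j)$, so summing $\langle v,S_{\mathbf{w}}\rangle$ over all words $v$ of that bidegree recovers every coefficient of $S_{\mathbf{w}}$. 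Applying the same identity to a single Lyndon word $\alpha_{k}$ identifies $\varepsilon^{*}(S_{\alpha_{k}})=C_{E_{\alpha_{k}}}$, the coefficient appearing on the right of (\ref{j-z formula}).

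The second step is to make $\varepsilon^{*}$ multiplicative. A direct count gives, for homogeneous $g,h$ of lengths $p,q$, the identity $\varepsilon^{*}(g\shuffle h)=\binom{p+q}{p}\varepsilon^{*}(g)\varepsilon^{*}(h)$, since shuffling any two words of lengths $p$ and $q$ produces $\binom{p+q}{p}$ terms independently of the words themselves. Hence the rescaled functional $\widetilde{\varepsilon}(g):=\varepsilon^{*}(g)/(\deg g)!$ satisfies $\widetilde{\varepsilon}(g\shuffle h)=\widetilde{\varepsilon}(g)\widetilde{\varepsilon}(h)$ on homogeneous elements. Combining the three ingredients and using $\widetilde{\varepsilon}(S_{\alpha_{k}})=C_{E_{\alpha_{k}}}/|\alpha_{k}|!$,
\[
C_{E_{\mathbf{w}}}=\varepsilon^{*}(S_{\mathbf{w}})=(i+j)!\,\widetilde{\varepsilon}(S_{\mathbf{w}})=\frac{(i+j)!}{\prod_{k} t_{\alpha_{k}}!}\prod_{k}\widetilde{\varepsilon}(S_{\alpha_{k}})^{t_{\alpha_{k}}}=\frac{(i+j)!}{\prod_{k}(|\alpha_{k}|!)^{t_{\alpha_{k}}}t_{\alpha_{k}}!}\prod_{k}C_{E_{\alpha_{k}}}^{t_{\alpha_{k}}},
\]
which is exactly (\ref{j-z formula}).

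The main obstacle lies entirely in the imported structural input: the explicit factorized form of the dual PBW basis $S_{\mathbf{w}}$ together with the verification that $\{E_{\mathbf{w}}\}$ and $\{S_{\mathbf{w}}\}$ are genuinely dual for the word pairing. Everything after that is a short computation. An alternative, self-contained route stays inside the paper's framework: induct on $i+j$ via $\mathcal{SH}_{i,j}(E_{2},E_{1})=E_{2}\,\mathcal{SH}_{i-1,j}(E_{2},E_{1})+E_{1}\,\mathcal{SH}_{i,j-1}(E_{2},E_{1})$ from (\ref{formula:recursion1-shuffle-poly}), note that left multiplication by the maximal letter $E_{2}$ merely prepends $E_{2}$ with no correction terms (Corollary \ref{cor:commutator-LS-basis}(c) with $k=0$), and reorder $E_{1}\cdot(\text{PBW monomial})$ by Corollary \ref{cor:commutator-LS-basis}(b). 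In that approach the hard part is the reverse of what the duality makes transparent: one must check that the binomial coefficients generated by the reordering collect into the closed-form multinomial $\frac{(i+j)!}{\prod_{k}(|\alpha_{k}|!)^{t_{\alpha_{k}}}t_{\alpha_{k}}!}$, a nontrivial combinatorial identity that is most cleanly organized by the generating-function viewpoint above.
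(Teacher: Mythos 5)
Your proposal is correct, and it takes a genuinely different route from the paper's. The paper proves Theorem \ref{thm:noncom-binom-coeff-LSbasis} by induction on $i+j$: it feeds the recursion (\ref{formula:recursion1-shuffle-poly}) into the straightening formula (\ref{formula:commutators-x-LSbasis}), splits into the cases $\alpha_{1}=2$ and $\alpha_{1}\neq 2$, identifies every product inside $E_{2}\cdot\mathcal{SH}_{i-1,j}$ and $E_{1}\cdot\mathcal{SH}_{i,j-1}$ from which the monomial $E_{\alpha_{1}}^{t_{\alpha_{1}}}\cdots E_{\alpha_{n}}^{t_{\alpha_{n}}}$ can emerge, and then checks that the resulting sum of binomial-coefficient contributions collapses to the closed form --- exactly the bookkeeping your last paragraph predicts is the hard part of that route (your sketch of it is accurate: prepending $E_{2}$ needs no correction since $2$ is the maximal Lyndon word, while $E_{1}$ requires Corollary \ref{cor:commutator-LS-basis}). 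Your argument replaces all of this by duality: the reduction of $C_{E_{\mathbf{w}}}$ to the coefficient-sum functional $\varepsilon^{*}(S_{\mathbf{w}})$ is sound because $\mathcal{SH}_{i,j}(E_{2},E_{1})$ is the sum of \emph{all} words of bidegree $(i,j)$ with coefficient $1$ and $S_{\mathbf{w}}$ is homogeneous of that bidegree; the identity $\varepsilon^{*}(g\shuffle h)=\binom{p+q}{p}\varepsilon^{*}(g)\varepsilon^{*}(h)$ is a correct count, so $\widetilde{\varepsilon}$ is multiplicative; and the divided-power factorization $S_{\mathbf{w}}=\frac{1}{\prod_{k}t_{\alpha_{k}}!}S_{\alpha_{1}}^{\shuffle t_{\alpha_{1}}}\shuffle\cdots\shuffle S_{\alpha_{n}}^{\shuffle t_{\alpha_{n}}}$ is precisely where the factor $\prod_{k}(|\alpha_{k}|!)^{t_{\alpha_{k}}}t_{\alpha_{k}}!$ originates, which is why your computation lands on (\ref{j-z formula}) in three lines. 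What each approach buys: yours is shorter and explains structurally why the answer has the Bell-polynomial shape noted after Corollary \ref{cor:noncomm-binom-thm-LSbasis}, but it imports the dual PBW basis theorem (Radford \cite{R1979}; Reutenauer, \emph{Free Lie Algebras}, Theorem 5.3), a result not otherwise needed in the paper; the paper's induction is longer but self-contained given its Section 1 apparatus, and it simultaneously establishes the supplementary recursion (\ref{coefficient of Lyndon-Shirshov basis}) for $C_{E_{\alpha}}$, which the paper reuses later. One caveat you should make explicit: the dual-basis factorization has factorials in denominators, so your argument is a priori a characteristic-zero argument; since the PBW change of basis is defined over $\mathbb{Z}$ and both sides of (\ref{j-z formula}) are integers, the identity descends to any field by reduction, and this remark matters because the paper applies the theorem in characteristic $p$ in Corollary \ref{cor:particular-case-noncom-binom}. (The paper's own rational-number manipulations implicitly rely on the same integrality transfer, so this is a shared debt, not a defect unique to your proof.)
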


\begin{proof}
We prove this result by induction on $i+j$. It holds for $0\le i+j\le 2$, see Example \ref{eg.:SH2} . For every $\alpha_{k}\ne 2$, by Lemma \ref{Lemma 2.4} there exists an unique factorization of $\alpha_{k}$ such that
$$\alpha_{k}=1\underbrace{\alpha_{k,1}\cdots \alpha_{k,1}}_{a_{k,1}} \cdots \underbrace{\alpha_{k,n_{k}}\cdots \alpha_{k,n_{k}}}_{a_{k,n_{k}}},$$
where 
$\alpha_{k,c}\in \mathcal{L}$, $a_{k,c}\in \mathbb{N}$, $1\le c\le n_{k}$ and $\alpha_{k,1}\succ \alpha_{k,2}\succ \ldots\succ \alpha_{k,n_{k}}\succ \alpha_{k}$. 
Thus, by the formulas (\ref{formula:recursion1-shuffle-poly}) and (\ref{formula:commutators-x-LSbasis}),
$E_{\alpha_{k}}$ only occurs in $E_{1}\cdot E_{\alpha_{k,1}}^{a_{k,1}} \cdots E_{\alpha_{k,n_{k}}}^{a_{k,n_{k}}}$ such that 
\begin{align*}
E_{\alpha_{k}}=E_{1\underbrace{\alpha_{k,1}\ldots \alpha_{k,1}}_{a_{k,1}} \ldots \underbrace{\alpha_{k,n_{k}}\ldots \alpha_{k,n_{k}}}_{a_{k,n_{k}}}},
\end{align*}
$E_{\alpha_{k,1}}^{a_{k,1}} \cdots E_{\alpha_{k,n_{k}}}^{a_{k,n_{k}}}$ is a term of $\mathcal{SH}_{\alpha_{k}(2),\alpha_{k}(1)-1}$ and the coeffcient of $E_{\alpha_{k}}$ is
$C_{E_{\alpha_{k,1}}^{a_{k,1}} \cdots E_{\alpha_{k,n_{k}}}^{a_{k,n_{k}}}}$.  Since $\alpha_{k}(2)+\alpha_{k}(1)-1< i+j$,  by induction we have
\begin{equation}\label{coefficient of Lyndon-Shirshov basis}
C_{E_{\alpha_{k}}}=C_{E_{\alpha_{k,1}}^{a_{k,1}} \cdots E_{\alpha_{k,n_{k}}}^{a_{k,n_{k}}}}=\frac{(\sum_{c=1}^{n_{k}}|\alpha_{k,c}|a_{k,c})!}{\prod_{c=1}^{n_{k}}(|\alpha_{k,c}|!)^{a_{k,c}}a_{k,c}!}\prod_{c=1}^{n_{k}}C_{E_{\alpha_{k,c}}}^{a_{k,c}}=\frac{(|\alpha_{k}|-1)!}{\prod_{c=1}^{n_{k}}(|\alpha_{k,c}|!)^{a_{k,c}}a_{k,c}!}\prod_{c=1}^{n_{k}}C_{E_{\alpha_{k,c}}}^{a_{k,c}}.
\end{equation}
For convenience, assume $t_{\alpha_{k}}\ge 1$ for all $1\le k\le n$. Now we consider two cases: $\alpha_{1}=2$ and $\alpha_{1}\ne 2$.
  
\noindent\textit{Case 1}: 
$\alpha_{1}=2$. By Formula (\ref{formula:recursion1-shuffle-poly}), $E_{2}^{t_{2}}E_{\alpha_{2}}^{t_{\alpha_{2}}}\cdots E_{\alpha_{n}}^{t_{\alpha_{n}}}$ appears in $E_{2}\cdot E_{2}^{t_{2}-1}E_{\alpha_{2}}^{t_{\alpha_{2}}}\cdots E_{\alpha_{n}}^{t_{\alpha_{n}}}$, with the following coefficient appearing $\mathcal{SH}_{i-1,j}$:
\begin{align}\label{coefficient-1}
C_{E_{2}^{t_{2}-1}E_{\alpha_{2}}^{t_{\alpha_{2}}}\cdots E_{\alpha_{n}}^{t_{\alpha_{n}}}}.
\end{align}
Similarly, $E_{2}^{t_{2}}E_{\alpha_{2}}^{t_{\alpha_{2}}}\cdots E_{\alpha_{n}}^{t_{\alpha_{n}}}$  appears in 
$E_{1}\cdot E_{2}^{t_{2}}E_{\alpha_{2}}^{t_{\alpha_{2}}}\cdots E_{\alpha_{k-1}}^{t_{\alpha_{k-1}}}(E_{\alpha_{k,1}}^{a_{k,1}}\cdots E_{\alpha_{k,n_{k}}}^{a_{k,n_k}}E_{\alpha_{k}}^{t_{\alpha_{k}}-1})E_{\alpha_{k+1}}^{t_{\alpha_{k+1}}}\cdots E_{\alpha_{n}}^{t_{\alpha_{n}}}$ 
for all $2\le k \le n$ by Formula (\ref{formula:commutators-x-LSbasis}). However, the coefficient of  $E_{2}^{t_{2}}E_{\alpha_{2}}^{t_{\alpha_{2}}}\cdots E_{\alpha_{n}}^{t_{\alpha_{n}}}$ appearing in  the latter can not be determined, because the lexicographic order of the latter may not be right. Without loss of generality, assume that there is some $s$ such that $1\le s\le n_{k}$, $2\le r_{1}<r_{2}<\ldots <r_{s}< k-1$, $\alpha_{k,1}=\alpha_{r_{1}},~\alpha_{k,2}=\alpha_{r_{2}},\ldots,~\alpha_{k,s}=\alpha_{r_{s}}\succ \alpha_{k-1}\succ \alpha_{k,s+1}$ (the proof of the case: $\alpha_{k-1}=\alpha_{k,s+1}$ is similar to the case: $\alpha_{k-1}\succ \alpha_{k,s+1}$). 
Now we have $2\succ \alpha_{2}\succ \ldots \succ \alpha_{r_{1}}\succ \ldots \succ \alpha_{r_{2}} \succ \ldots \succ \alpha_{r_{s}} \succ\ldots\succ \alpha_{k-1}\succ \alpha_{k,s+1}\succ\ldots\succ  \alpha_{k,n_{k}}\succ \alpha_{k} \succ \alpha_{k+1}\succ \ldots \succ \alpha_{n}$.
It follows from Lemma \ref{Lemma 2.4} and Formula (\ref{formula:commutators-x-LSbasis}) that $E_{2}^{t_{2}}E_{\alpha_{2}}^{t_{\alpha_{2}}}\cdots E_{\alpha_{n}}^{t_{\alpha_{n}}}$ appears in 
\begin{align*}
E_{1}\cdot E_{2}^{t_{2}}E_{\alpha_{2}}^{t_{\alpha_{2}}}\cdots E_{\alpha_{r_{1}}}^{t_{\alpha_{r_{1}}}+a_{k,1}}\cdots E_{\alpha_{r_{2}}}^{t_{\alpha_{r_{2}}}+a_{k,2}}\cdots E_{\alpha_{r_{s}}}^{t_{\alpha_{r_{s}}}+a_{k,s}}\cdots E_{\alpha_{k-1}}^{t_{\alpha_{k-1}}}E_{\alpha_{k,s+1}}^{a_{k,s+1}}\cdots E_{\alpha_{k,n_{k}}}^{a_{k,n_{k}}} E_{\alpha_{k}}^{t_{\alpha_{k}}-1}E_{\alpha_{k+1}}^{t_{\alpha_{k+1}}}\cdots E_{\alpha_{n}}^{t_{\alpha_{n}}},
\end{align*} 
with the coefficient appearing in  $\mathcal{SH}_{i,j-1}$:
\begin{equation}\label{formula 9}
\begin{array}{l}
\qquad \begin{aligned}
& \binom{t_{\alpha_{r_{1}}}+a_{k,1}}{a_{k,1}}\binom{t_{\alpha_{r_{2}}}+a_{k,2}}{a_{k,2}}\ldots \binom{t_{\alpha_{r_{s}}}+a_{k,s}}{a_{k,s}} \cdot \\
& C_{ E_{2}^{t_{2}}E_{\alpha_{2}}^{t_{\alpha_{2}}}\cdots E_{\alpha_{r_{1}}}^{t_{\alpha_{r_{1}}}+a_{k,1}}\cdots E_{\alpha_{r_{2}}}^{t_{\alpha_{r_{2}}}+a_{k,2}}\cdots E_{\alpha_{r_{s}}}^{t_{\alpha_{r_{s}}}+a_{k,s}}\cdots E_{\alpha_{k-1}}^{t_{\alpha_{k-1}}}E_{\alpha_{k,s+1}}^{a_{k,s+1}}\cdots E_{\alpha_{k,n_{k}}}^{a_{k,n_{k}}} E_{\alpha_{k}}^{t_{\alpha_{k}}-1}E_{\alpha_{k+1}}^{t_{\alpha_{k+1}}}\cdots E_{\alpha_{n}}^{t_{\alpha_{n}}}}. \\
\end{aligned}
\end{array}
\end{equation}
Since $|\alpha_{1}|=|2|=1$, by induction on $i+j$ the formula (\ref{coefficient-1}) yields:
\begin{align*}
\frac{(\sum_{\ell=1}^{n}|\alpha_{\ell}|t_{\alpha_{\ell}}-1)!|\alpha_{1}|t_{\alpha_{1}}}{\prod_{\ell=1}^{n}(|\alpha_{\ell}|!)^{t_{\alpha_{\ell}}}t_{\alpha_{\ell}}!}\left(\prod_{\ell=1}^{n}C_{E_{\alpha_{\ell}}}^{t_{\alpha_{\ell}}}\right).
\end{align*}
Using the relation $\alpha_{k,u}=\alpha_{r_{u}}$ for $1\le u\le s$, Formulas (\ref{formula:commutators-x-LSbasis}) and (\ref{coefficient of Lyndon-Shirshov basis}), we conclude inductively that for $2\le k \le n$, the formula (\ref{formula 9}) amounts to
\begin{align*}
& \frac{\left(t_{2}+\sum_{\ell=2,\ell\neq k}^{n}|\alpha_{\ell}|t_{\alpha_{\ell}}+\sum_{c=1}^{n_{k}}|\alpha_{k,c}|a_{k,c}+|\alpha_{k}|(t_{\alpha_{k}}-1)\right)!C_{E_{2}}^{t_{2}}\left(\prod_{c=1}^{n_{k}}C_{E_{\alpha_{k,c}}}^{a_{k,c}}\right)C_{E_{\alpha_{k}}}^{t_{\alpha_{k}}-1}\left(\prod_{\ell=2,\ell\neq k}^{n}C_{E_{\alpha_{\ell}}}^{t_{\alpha_{\ell}}}\right)}{t_{2}!(\prod_{\ell=2,\ell\neq k}^{n}(|\alpha_{\ell}|!)^{t_{\alpha_{\ell}}}t_{\alpha_{\ell}}!)(\prod_{c=1}^{n_{k}}(|\alpha_{k,c}|!)^{a_{k,c}}a_{k,c}!)(|\alpha_{k}|!)^{t_{\alpha_{k}}-1}(t_{\alpha_{k}}-1)!}\\
=&\frac{(\sum_{\ell=1}^{n}|\alpha_{\ell}|t_{\alpha_{\ell}}-1)!}{(\prod_{\ell=1,\ell\neq k}^{n}(|\alpha_{\ell}|!)^{t_{\alpha_{\ell}}}t_{\alpha_{\ell}}!)}\left(\prod_{\ell=1,\ell\neq k}^{n}C_{E_{\alpha_{\ell}}}^{t_{\alpha_{\ell}}}\right)C_{E_{\alpha_{k}}}^{t_{\alpha_{k}}-1}
\frac{|\alpha_{k}|t_{\alpha_{k}}(|\alpha_{k}|-1)!}{(|\alpha_{k}|!)^{t_{\alpha_{k}}}t_{\alpha_{k}}!\prod_{c=1}^{n_{k}}(|\alpha_{k,c}|!)^{a_{k,c}}a_{k,c}!}\left(\prod_{c=1}^{n_{k}}C_{E_{\alpha_{k,c}}}^{a_{k,c}}\right)\\
=&\frac{(\sum_{\ell=1}^{n}|\alpha_{\ell}|t_{\alpha_{\ell}}-1)!}{(\prod_{\ell=1,\ell\neq k}^{n}(|\alpha_{\ell}|!)^{t_{\alpha_{\ell}}}t_{\alpha_{\ell}}!)}\left(\prod_{\ell=1,\ell\neq k}^{n}C_{E_{\alpha_{\ell}}}^{t_{\alpha_{\ell}}}\right)\frac{|\alpha_{k}|t_{\alpha_{k}}}{(|\alpha_{k}|!)^{t_{\alpha_{k}}}t_{\alpha_{k}}!}C_{E_{\alpha_{k}}}^{t_{\alpha_{k}}}\\
=& \frac{(\sum_{\ell=1}^{n}|\alpha_{\ell}|t_{\alpha_{\ell}}-1)!|\alpha_{k}|t_{\alpha_{k}}}{\prod_{\ell=1}^{n}(|\alpha_{\ell}|!)^{t_{\alpha_{\ell}}}t_{\alpha_{\ell}}!}\left(\prod_{\ell=1}^{n}C_{E_{\alpha_{\ell}}}^{t_{\alpha_{\ell}}}\right).
\end{align*}
Finally, we sum all the coefficients appearing in  $\mathcal{SH}_{i-1,j}$ and $\mathcal{SH}_{i,j-1}$ for all $1\le k\le n$:
\begin{align*}
C_{E_{\alpha_{1}}^{t_{\alpha_{1}}} \cdots E_{\alpha_{n}}^{t_{\alpha_{n}}}}
& =\sum_{k=1}^{n}\frac{(\sum_{\ell=1}^{n}|\alpha_{\ell}|t_{\alpha_{\ell}}-1)!|\alpha_{k}|t_{\alpha_{k}}}{(\prod_{\ell=1}^{n}(|\alpha_{\ell}|!)^{t_{\alpha_{\ell}}}t_{\alpha_{\ell}}!)}\left(\prod_{\ell=1}^{n}C_{E_{\alpha_{\ell}}}^{t_{\alpha_{\ell}}}\right)\\
& =\frac{(\sum_{\ell=1}^{n}|\alpha_{\ell}|t_{\alpha_{\ell}})!}{\prod_{\ell=1}^{n}(|\alpha_{\ell}|!)^{t_{\alpha_{\ell}}}t_{\alpha_{\ell}}!}\prod_{\ell=1}^{n}C_{E_{\alpha_{\ell}}}^{t_{\alpha_{\ell}}}.\\
& =\frac{(i+j)!}{\prod_{k=1}^{n}(|\alpha_{k}|!)^{t_{\alpha_{k}}}t_{\alpha_{k}}!}\prod_{k=1}^{n}C_{E_{\alpha_{k}}}^{t_{\alpha_{k}}}.
\end{align*}

\noindent\textit{Case 2}: $\alpha_{1}\neq 2$. From Formula (\ref{formula:recursion1-shuffle-poly}), we can see that $E_{\alpha_{1}}^{t_{\alpha_{1}}} \cdots E_{\alpha_{n}}^{t_{\alpha_{n}}}$ only occurs in $E_{1}\cdot \mathcal{SH}_{i,j-1}$. The rest is similar to the proof of Case 1.
\end{proof}

Note that Formula (\ref{coefficient of Lyndon-Shirshov basis}) gives the coefficients of the Lyndon-Shirshov basis elements $E_{\alpha}$
in $\mathcal{SH}_{\alpha(2),\alpha(1)}$, as a supplement to Formula (\ref{j-z formula}).

 Now we have the following normalized  forms of  the shuffle type  polynomials  and the binomial expansions from Theorem \ref{thm:noncom-binom-coeff-LSbasis}.

\begin{corollary}\label{cor:noncomm-binom-thm-LSbasis}
Let $i,j,d\ge 0$. The following hold:
\begin{align}
\label{j-z formula 2}
&\mathcal{SH}_{i,j}=\sum_{\sum_{k=1}^{n}t_{\alpha_{k}}\alpha_{k}(2)=i \atop \sum_{k=1}^{n}t_{\alpha_{k}}\alpha_{k}(1)=j}^{}\frac{(i+j)!}{t_{\alpha_{1}}! \cdots t_{\alpha_{n}}!}\left(\frac{C_{E_{\alpha_{1}}}E_{\alpha_{1}}}{|\alpha_{1}|!}\right)^{t_{\alpha_{1}}} \cdots \left(\frac{C_{E_{\alpha_{n}}}E_{\alpha_{n}}}{|\alpha_{n}|!}\right)^{t_{\alpha_{n}}},\\
\label{j-z formula 3}
& (E_{1}+E_{2})^{d}=\sum_{\sum_{k=1}^{n}t_{\alpha_{k}}|\alpha_{k}|=d}^{}\frac{d!}{t_{\alpha_{1}}! \cdots t_{\alpha_{n}}!}\left(\frac{C_{E_{\alpha_{1}}}E_{\alpha_{1}}}{|\alpha_{1}|!}\right)^{t_{\alpha_{1}}} \cdots \left(\frac{C_{E_{\alpha_{n}}}E_{\alpha_{n}}}{|\alpha_{n}|!}\right)^{t_{\alpha_{n}}},
\end{align}
where $\alpha_{k}\in \mathcal{L}_{X}$, $t_{\alpha_{k}}\in \mathbb{N}$, $C_{E_{\alpha_{k}}}$ is determined by Formulas (\ref{coefficient of Lyndon-Shirshov basis}), (\ref{j-z formula}),  $\alpha_{1}\succ \ldots\succ \alpha_{n}$ and $1\le k\le n$.
\end{corollary}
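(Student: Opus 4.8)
The plan is to obtain both identities as formal rearrangements of Theorem~\ref{thm:noncom-binom-coeff-LSbasis}, together with Definition~\ref{Definition 2.8} for the second one; no new combinatorial content is needed beyond the coefficient formula~(\ref{j-z formula}).

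For~(\ref{j-z formula 2}) I would start from the expansion recorded just before Theorem~\ref{thm:noncom-binom-coeff-LSbasis},
\[ \mathcal{SH}_{i,j}=\sum C_{E_{\alpha_{1}}^{t_{\alpha_{1}}} \cdots E_{\alpha_{n}}^{t_{\alpha_{n}}}}\, E_{\alpha_{1}}^{t_{\alpha_{1}}} \cdots E_{\alpha_{n}}^{t_{\alpha_{n}}}, \]
the sum ranging over non-increasing tuples $\alpha_{1}\succ \cdots \succ \alpha_{n}$ in $\mathcal{L}_{X}$ with $\sum_{k}t_{\alpha_{k}}\alpha_{k}(2)=i$ and $\sum_{k}t_{\alpha_{k}}\alpha_{k}(1)=j$, and substitute the value of each coefficient given by~(\ref{j-z formula}). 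Since $|\alpha_{k}|!$ and $C_{E_{\alpha_{k}}}$ are scalars, I can absorb them into the monomial powers via
\[ \frac{C_{E_{\alpha_{k}}}^{t_{\alpha_{k}}}}{(|\alpha_{k}|!)^{t_{\alpha_{k}}}}\, E_{\alpha_{k}}^{t_{\alpha_{k}}}=\left(\frac{C_{E_{\alpha_{k}}}E_{\alpha_{k}}}{|\alpha_{k}|!}\right)^{t_{\alpha_{k}}}, \]
leaving the prefactor $(i+j)!/\prod_{k}t_{\alpha_{k}}!$; this is exactly the summand of~(\ref{j-z formula 2}). The only point needing attention is that~(\ref{j-z formula 2}) sums over \emph{all} admissible tuples, whereas Theorem~\ref{thm:noncom-binom-coeff-LSbasis} assigns a coefficient only to monomials that actually occur. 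I would settle this by noting that, by~(\ref{coefficient of Lyndon-Shirshov basis}) and induction with the base values $C_{E_{1}}=C_{E_{2}}=1$, every $C_{E_{\alpha_{k}}}$ is a positive rational, so the coefficient~(\ref{j-z formula}) is nonzero for each admissible tuple; since the recursive construction via~(\ref{formula:recursion1-shuffle-poly}) and~(\ref{formula:commutators-x-LSbasis}) generates precisely these monomials, the two index sets coincide and no spurious term is introduced.

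For~(\ref{j-z formula 3}) I would set $x=E_{1}$, $y=E_{2}$ in Definition~\ref{Definition 2.8}, giving $(E_{1}+E_{2})^{d}=\sum_{i+j=d}\mathcal{SH}_{i,j}$, and then substitute~(\ref{j-z formula 2}). As $i+j=d$ throughout, the common factorial is $(i+j)!=d!$, and the double sum over $i+j=d$ with inner constraints $\sum_{k}t_{\alpha_{k}}\alpha_{k}(2)=i$ and $\sum_{k}t_{\alpha_{k}}\alpha_{k}(1)=j$ merges into a single sum; because the alphabet is $\{1,2\}$, one has $\alpha_{k}(1)+\alpha_{k}(2)=|\alpha_{k}|$, so adding the two linear constraints collapses them to the single constraint $\sum_{k}t_{\alpha_{k}}|\alpha_{k}|=d$ of~(\ref{j-z formula 3}). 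I expect the whole argument to be routine bookkeeping resting on Theorem~\ref{thm:noncom-binom-coeff-LSbasis}; the only mildly delicate steps are the index-set matching in~(\ref{j-z formula 2}), handled by the nonvanishing of the coefficients, and checking that the two degree constraints collapse correctly once $\alpha_{k}(1)+\alpha_{k}(2)$ is identified with $|\alpha_{k}|$.
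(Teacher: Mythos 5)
Your proposal is correct and follows essentially the same route as the paper, which states Corollary~\ref{cor:noncomm-binom-thm-LSbasis} as an immediate consequence of Theorem~\ref{thm:noncom-binom-coeff-LSbasis}: substitute the coefficient formula~(\ref{j-z formula}) into the PBW expansion of $\mathcal{SH}_{i,j}$, normalize the scalars into the powers, and sum over $i+j=d$ via Definition~\ref{Definition 2.8} using $\alpha_{k}(1)+\alpha_{k}(2)=|\alpha_{k}|$. Your extra care about matching the index set (nonvanishing of the coefficients so that every admissible PBW monomial actually occurs) is a point the paper leaves implicit, but it does not change the argument.
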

The normalized forms of Formulas (\ref{j-z formula 2}), (\ref{j-z formula 3}) are similar to the classical Bell polynomials introduced by Bell \cite{B1927,B1934} (see Formula (\ref{classical Bell polynomials})). We will discuss their relationship in Section \ref{Section 4}.

\begin{corollary}\label{cor:particular-case-noncom-binom}
The following hold: 
\begin{itemize}
\item [(a)] For all $s,t\ge 0$ and $s+t\ge 1$,
\begin{align*}
C{_{E_{\scriptsize \underbrace{1\cdots1}_{s}\underbrace{2\cdots2}_{t}}}}=1.
\end{align*}
\item [(b)] If $\Char\K=p$, then
\begin{align*}
\mathcal{SH}_{k,p-k}(E_{2},E_{1})=\sum_{\alpha(2)=k\atop \alpha(1)=p-k}^{}C_{E_{\alpha}}E_{\alpha},\quad 1\le k\le p-1.
\end{align*} 
Therefore,
\begin{align*}
(E_{1}+E_{2})^{p}=E_{2}^{p}+\sum_{|\alpha|=p}^{}C_{E_{\alpha}}E_{\alpha}+E_{1}^{p}.
\end{align*}

\item [(c)] Let $\Char\K=p$. In particular, if $E_{\alpha}=0$ for all $\alpha$ such that $\alpha(1)\ge 2$ and $|\alpha|=p$, then 
\begin{equation*}
\mathcal{SH}_{k,p-k}(E_{2},E_{1})=\left\{
\begin{aligned}
&~ E_{1\scriptsize \underbrace{2\cdots2}_{p-1} } , & k=p-1, \\
&~ 0 , & 1\le k\le p-2.
\end{aligned}
\right.
\end{equation*}
Therefore,
\begin{align*}
(E_{1}+E_{2})^{p}=E_{2}^{p}+E_{1\scriptsize\underbrace{2\cdots2}_{p-1}}+E_{1}^{p}.
\end{align*}
Similarly, if $E_{\alpha}=0$ for all $\alpha$ such that $\alpha(2)\ge 2$ and $|\alpha|=p$, then $\mathcal{SH}_{k,p-k}(E_{2},E_{1})=0$ for all $2\le k\le p-1$ and $\mathcal{SH}_{1,p-1}(E_{2},E_{1})=E_{\scriptsize  \underbrace{1\cdots1}_{p-1}2}$. Therefore,
$(E_{1}+E_{2})^{p}=E_{2}^{p}+E_{\scriptsize\underbrace{1\cdots1}_{p-1}2}+E_{1}^{p}$.
\end{itemize}
\end{corollary}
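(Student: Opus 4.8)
The plan is to handle the three parts in order, obtaining (b) and (c) as consequences of the coefficient formula of Theorem~\ref{thm:noncom-binom-coeff-LSbasis} together with part~(a). First note that $\underbrace{1\cdots1}_{s}\underbrace{2\cdots2}_{t}$ is a Lyndon word precisely when $s,t\ge 1$ or $(s,t)\in\{(1,0),(0,1)\}$, so $E_{\alpha}$ is defined exactly in these cases. For part~(a) I would argue by induction on $s$ using Formula~(\ref{coefficient of Lyndon-Shirshov basis}), which computes $C_{E_{\alpha}}$ from the non-increasing Lyndon factorization of the word obtained by deleting the leading letter $1$ of $\alpha$. The base cases $\alpha\in\{1,2\}$ give $C_{E_{1}}=C_{E_{2}}=1$ directly from $\mathcal{SH}_{0,1}=E_{1}$ and $\mathcal{SH}_{1,0}=E_{2}$. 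For $s=1$, deleting the leading $1$ from $1\,2^{t}$ leaves $2^{t}$, whose non-increasing Lyndon factorization is $2\cdots2$ ($t$ factors), so Formula~(\ref{coefficient of Lyndon-Shirshov basis}) gives $C_{E_{12^{t}}}=\frac{t!}{(1!)^{t}t!}C_{E_{2}}^{t}=1$. For $s\ge 2$ and $t\ge 1$, deleting the leading $1$ from $1^{s}2^{t}$ leaves the single Lyndon word $1^{s-1}2^{t}$ (a factorization with one factor of multiplicity one), so the formula collapses to $C_{E_{1^{s}2^{t}}}=\frac{(s+t-1)!}{(s+t-1)!}C_{E_{1^{s-1}2^{t}}}=C_{E_{1^{s-1}2^{t}}}$, completing the induction.

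For part~(b), I would apply Theorem~\ref{thm:noncom-binom-coeff-LSbasis} with $i+j=p$ and determine when the coefficient
\[
C_{E_{\alpha_{1}}^{t_{\alpha_{1}}}\cdots E_{\alpha_{n}}^{t_{\alpha_{n}}}}=\frac{p!}{\prod_{k=1}^{n}(|\alpha_{k}|!)^{t_{\alpha_{k}}}t_{\alpha_{k}}!}\prod_{k=1}^{n}C_{E_{\alpha_{k}}}^{t_{\alpha_{k}}}
\]
survives modulo $p$. The heart of the argument is a $p$-adic valuation estimate on the integer multinomial factor $M=\frac{p!}{\prod_{k}(|\alpha_{k}|!)^{t_{\alpha_{k}}}t_{\alpha_{k}}!}$. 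Since $\sum_{k}|\alpha_{k}|t_{\alpha_{k}}=p$ with every $|\alpha_{k}|\ge 1$, one has $|\alpha_{k}|\le p$ and $t_{\alpha_{k}}\le p$, where $|\alpha_{k}|=p$ forces $n=1$, $t_{\alpha_{1}}=1$, and $t_{\alpha_{k}}=p$ forces a pure power $E_{1}^{p}$ or $E_{2}^{p}$. In the range $1\le k\le p-1$ both letters occur, so the pure powers are excluded; hence for every term other than a single $E_{\alpha}$ with $|\alpha|=p$ we have $|\alpha_{k}|<p$ and $t_{\alpha_{k}}<p$ for all $k$, the denominator of $M$ is then prime to $p$, while the numerator $p!$ contributes exactly one factor $p$. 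Thus $v_{p}(M)=1$, the coefficient vanishes modulo $p$, and only the terms $E_{\alpha}$ with $\alpha\in\mathcal{L}$, $|\alpha|=p$ remain. Summing over $0\le k\le p$, using $\mathcal{SH}_{0,p}=E_{1}^{p}$, $\mathcal{SH}_{p,0}=E_{2}^{p}$, and the fact that every Lyndon word of length $p\ge 2$ contains both letters, yields the stated expansion of $(E_{1}+E_{2})^{p}$.

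For part~(c), I would impose the vanishing hypothesis on the expansion from part~(b). Assuming $E_{\alpha}=0$ whenever $\alpha(1)\ge 2$ and $|\alpha|=p$, the only surviving Lyndon words of length $p$ are those with $\alpha(1)=1$; since a Lyndon word with a single $1$ must begin with its unique smallest letter, this word is exactly $1\,2^{p-1}$, which has $\alpha(2)=p-1$. Hence $\mathcal{SH}_{k,p-k}=0$ for $1\le k\le p-2$, while $\mathcal{SH}_{p-1,1}=C_{E_{12^{p-1}}}E_{12^{p-1}}=E_{12^{p-1}}$ by part~(a), giving $(E_{1}+E_{2})^{p}=E_{2}^{p}+E_{12^{p-1}}+E_{1}^{p}$. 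The dual statement follows symmetrically: under $E_{\alpha}=0$ for $\alpha(2)\ge 2$, $|\alpha|=p$, the unique surviving Lyndon word is $1^{p-1}2$ with $\alpha(2)=1$, so $\mathcal{SH}_{k,p-k}=0$ for $2\le k\le p-1$ and $\mathcal{SH}_{1,p-1}=E_{1^{p-1}2}$.

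The main obstacle is the valuation bookkeeping in part~(b): one must verify carefully that the only configurations in which the denominator of $M$ can absorb the single factor $p$ of $p!$ are the excluded ones (a full-length Lyndon word giving $M=1$, or a pure power appearing only in the two boundary terms $k\in\{0,p\}$), and that in the interior range $1\le k\le p-1$ the pure powers never occur. Once this is in place, parts~(a) and (c) are short formal consequences.
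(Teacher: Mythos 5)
Your proposal is correct and follows essentially the same route as the paper: part (a) by the same telescoping application of Formula (\ref{coefficient of Lyndon-Shirshov basis}) down to $C_{E_{2}^{t}}=(C_{E_{2}})^{t}=1$, part (b) by reading off divisibility by $p$ of the coefficients in Formula (\ref{j-z formula}), and part (c) as a formal consequence of (a) and (b). The only difference is that you make explicit the $p$-adic valuation bookkeeping and the exclusion of the pure powers $E_{1}^{p}$, $E_{2}^{p}$ from the range $1\le k\le p-1$, details which the paper's one-sentence proof of (b) leaves implicit.
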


\begin{proof}
By Formulas (\ref{coefficient of Lyndon-Shirshov basis}), (\ref{j-z formula}) and $C_{E_{2}}=1$, we have 
\begin{align*}
C{_{E_{\tiny\underbrace{1\cdots1}_{s}\underbrace{2\cdots2}_{t}}}}=
C{_{E_{\tiny \underbrace{1\cdots1}_{s-1}\underbrace{2\cdots2}_{t}}}}=\ldots=C{_{E_{1 \tiny \underbrace{2\cdots2}_{t}}}}=C_{E_{2}^{t}}=(C_{E_{2}})^{t}=1.
\end{align*}

If $\Char\K=p$, then we see that the coefficients of terms in $\mathcal{SH}_{k,p-k}$ ($1\le k\le p-1$) are zero, except for the terms of length $p$, by Formulas (\ref{j-z formula}) and (\ref{coefficient of Lyndon-Shirshov basis}). Thus Part (b) holds. Part (c) follows from Parts (a) and (b). 
\end{proof}

In general,  the formulas (\ref{j-z formula}), (\ref{j-z formula 2}) and (\ref{j-z formula 3}) stay the same for the multinomial expansions. Let $(X,\preceq)=(\{1,\ldots,m\},1\prec \ldots \prec m)$ and $i_{x}\in \mathbb{N}$ for $x\in X$. The $(i_{m},\ldots,i_{1})$-shuffle type polynomial $\mathcal{SH}_{i_{m},\ldots,i_{1}}:=\mathcal{SH}_{i_{m},\ldots,i_{1}}(E_{m},\ldots,E_{1})$ is defined by:
\begin{align*}
(E_{1}+\ldots+E_{m})^{d}=\sum_{i_{1}+\ldots+i_{m}=d}^{}\mathcal{SH}_{i_{m},\ldots,i_{1}}(E_{m},\ldots,E_{1}),
\end{align*}
where $\mathcal{SH}_{i_{m},\ldots,i_{1}}(E_{m},\ldots,E_{1})$ is the sum of the words $\alpha$ in $(E_{1}+\ldots+E_{m})^{d}$ such that $\alpha(E_{x})=\alpha(x)=i_{x}$ for all $x\in X$.

Furthermore, $\mathcal{SH}_{i_{m},\ldots,i_{1}}(E_{m},\ldots,E_{1})=E_{m}^{i_{m}}\shuffle \ldots \shuffle E_{1}^{i_{1}}$, where $\shuffle$ is the shuffle product. Note that $\mathcal{SH}_{i_{m},\ldots,i_{1}}$ has the recursion:
\begin{align}\label{formula:recursion-SH_im}
\mathcal{SH}_{i_{m},\ldots,i_{1}}=E_{m}\mathcal{SH}_{i_{m}-1,i_{m-1},\ldots ,i_{1}}+\ldots+E_{1}\mathcal{SH}_{i_{m},i_{m-1},\ldots ,i_{1}-1}.
\end{align}
From Formulas (\ref{PBW-basis-KX}), \ref{formula:recursion-SH_im} and (\ref{formula:commutators-x-LSbasis}), $\mathcal{SH}_{i_{m},\ldots,i_{1}}$ can be inductively rewritten as a linear combination of the form $E_{\alpha_{1}}^{t_{\alpha_{1}}}\ldots E_{\alpha_{n}}^{t_{\alpha_{n}}}$
such that $\alpha_{k}\in \mathcal{L}_{X}$, $t_{\alpha_{k}}\in \mathbb{N}$, $1\le k\le n$, $\alpha_{1}\succ \ldots\succ \alpha_{n}$ and
$i_{x}=\sum_{k=0}^{n}t_{\alpha_{k}}\alpha_{k}(x)$ for all $x\in X$. We still denote by $C_{E_{\alpha_{1}}^{t_{\alpha_{1}}}E_{\alpha_{2}}^{t_{\alpha_{2}}}\ldots E_{\alpha_{n}}^{t_{\alpha_{n}}}}$ the coefficient of the term $E_{\alpha_{1}}^{t_{\alpha_{1}}} \ldots E_{\alpha_{n}}^{t_{\alpha_{n}}}$ in $\mathcal{SH}_{i_{m},\ldots,i_{1}}$, and give a similar formula for it.

\begin{corollary}\label{cor:noncom-multinomial-thm}
Let $(X,\preceq)=(\{1,\ldots,m\},1\prec \ldots \prec m)$. Suppose that $E_{\alpha_{1}}^{t_{\alpha_{1}}}\cdots E_{\alpha_{n}}^{t_{\alpha_{n}}}$ is a term of $\mathcal{SH}_{i_{m},\ldots,i_{1}}$, where $\alpha_{k}\in \mathcal{L}_{X}$, $t_{\alpha_{k}}\in \mathbb{N}$, $1\le k\le n$, $\alpha_{1}\succ \ldots \succ \alpha_{n}$ and $i_{x}=\sum_{k=0}^{n}t_{\alpha_{k}}\alpha_{k}(x)$ for all $x\in X$. 
Then the coefficient of $E_{\alpha_{1}}^{t_{\alpha_{1}}} \cdots E_{\alpha_{n}}^{t_{\alpha_{n}}}$ in $\mathcal{SH}_{i_{m},\ldots,i_{1}}$ is given by
\begin{align*}
C_{E_{\alpha_{1}}^{t_{\alpha_{1}}} \cdots E_{\alpha_{n}}^{t_{\alpha_{n}}}}=\frac{(\sum_{x\in X}i_{x})!}{\prod_{k=1}^{n}\left(|\alpha_{k}|!\right)^{t_{\alpha_{k}}}t_{\alpha_{k}}!}\prod_{k=1}^{n}C_{E_{\alpha_{k}}}^{t_{\alpha_{k}}},
\end{align*} 
where $\sum_{x\in X}i_{x}=\sum_{k=1}^{n}|\alpha_{k}|t_{\alpha_{k}}$ and $C_{E_{\alpha_{k}}}$ is the coefficient of the term $E_{\alpha_{k}}$ in
 $\mathcal{SH}_{\alpha_{k}(m),\ldots,\alpha_{k}(1)}$. 
\end{corollary}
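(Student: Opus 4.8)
The plan is to run the same induction as in the proof of Theorem~\ref{thm:noncom-binom-coeff-LSbasis}, now on the total degree $d=\sum_{x\in X}i_{x}$, and to observe that nothing in that argument is special to the two-letter alphabet beyond the shape of the recursion. First I would record the elementary identity $\sum_{x\in X}i_{x}=\sum_{k=1}^{n}|\alpha_{k}|t_{\alpha_{k}}$, which follows from $i_{x}=\sum_{k}t_{\alpha_{k}}\alpha_{k}(x)$ together with $\sum_{x\in X}\alpha_{k}(x)=|\alpha_{k}|$; this is exactly what is needed so that the target coefficient reads $\frac{d!}{\prod_{k}(|\alpha_{k}|!)^{t_{\alpha_{k}}}t_{\alpha_{k}}!}\prod_{k}C_{E_{\alpha_{k}}}^{t_{\alpha_{k}}}$ with $d=\sum_{k}|\alpha_{k}|t_{\alpha_{k}}$. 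The base cases of small $d$ are checked directly, and the driving recursion is now (\ref{formula:recursion-SH_im}) with its $m$ prepending terms $E_{x}\mathcal{SH}_{\ldots,i_{x}-1,\ldots}$, one for each $x\in X$.

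The key structural input is the first-letter factorization. Every Lyndon word $\alpha$ of length $\ge 2$ begins with its minimal letter $x=x_{\alpha}$, and by Lemma~\ref{Lemma 2.4} its suffix factors uniquely as $\alpha=x\,\alpha_{1}^{a_{1}}\cdots\alpha_{r}^{a_{r}}$ with $\alpha_{1}\succ\cdots\succ\alpha_{r}$ Lyndon. Since every proper suffix of a Lyndon word strictly exceeds the word, each tail factor satisfies $\alpha_{c}\succ\alpha\succ x$; in particular all $\alpha_{c}\succ x$, so by (\ref{formula:commutators-x-LSbasis}) the letter $E_{x}$ may combine with all of them to rebuild $E_{\alpha}$. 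Exactly as in (\ref{coefficient of Lyndon-Shirshov basis}), $E_{\alpha}$ arises in $\mathcal{SH}$ of degree $|\alpha|-1$ only from $E_{x}\cdot E_{\alpha_{1}}^{a_{1}}\cdots E_{\alpha_{r}}^{a_{r}}$, which yields by induction the companion formula $C_{E_{\alpha}}=\frac{(|\alpha|-1)!}{\prod_{c}(|\alpha_{c}|!)^{a_{c}}a_{c}!}\prod_{c}C_{E_{\alpha_{c}}}^{a_{c}}$.

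The heart of the proof is then to collect the contributions to a fixed monomial $M=E_{\alpha_{1}}^{t_{\alpha_{1}}}\cdots E_{\alpha_{n}}^{t_{\alpha_{n}}}$ produced by (\ref{formula:recursion-SH_im}). Because each $\alpha_{k}$ has a unique minimal letter $x_{k}$, the recursion term $E_{x}\mathcal{SH}_{\ldots,i_{x}-1,\ldots}$ can create $M$ only through those factors $\alpha_{k}$ with $x_{k}=x$: prepending $E_{x_{k}}$ and letting it combine, via (\ref{formula:commutators-x-LSbasis}) and Corollary~\ref{cor:commutator-LS-basis}, with the tail factors of a suitable source monomial in $\mathcal{SH}$ of degree $d-1$ rebuilds one copy of $E_{\alpha_{k}}$ and returns $M$. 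This is verbatim the computation of Case~1 of Theorem~\ref{thm:noncom-binom-coeff-LSbasis} — the single-letter case $\alpha_{k}=x_{k}$ reproducing the $\alpha_{1}=2$ subcase and the longer words reproducing the combining subcase — so by induction each index $k$ contributes $\frac{(d-1)!\,|\alpha_{k}|t_{\alpha_{k}}}{\prod_{\ell}(|\alpha_{\ell}|!)^{t_{\alpha_{\ell}}}t_{\alpha_{\ell}}!}\prod_{\ell}C_{E_{\alpha_{\ell}}}^{t_{\alpha_{\ell}}}$. Summing over all $k$ and using $\sum_{k}|\alpha_{k}|t_{\alpha_{k}}=d$ turns the factor $(d-1)!\cdot d$ into $d!$, giving the claimed coefficient.

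I expect the only real obstacle to be bookkeeping, not structure. One must verify that prepending $E_{x_{k}}$ and forming $E_{\alpha_{k}}$ reproduces $M$ with the correct binomial constants when several equal tail factors are merged, precisely as in (\ref{formula 9}), and that the partition of contributions by the minimal letter $x_{k}$ is genuinely disjoint so that no monomial is double-counted. Both points are inherited unchanged from the two-letter proof, since the combination rule (\ref{formula:commutators-x-LSbasis}) and the ordering guarantees of Corollary~\ref{cor:commutator-LS-basis} make no reference to the size of the alphabet; the extra letters $3,\ldots,m$ merely enlarge the index set over which one sums without introducing any new interactions.
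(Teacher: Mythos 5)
Your proposal is correct and takes essentially the same approach as the paper: the paper's own proof consists precisely of observing that $C_{E_{\alpha_{k}}}$ again satisfies Formula (\ref{coefficient of Lyndon-Shirshov basis}) (via Lemma \ref{Lemma 2.4}, the recursion (\ref{formula:recursion-SH_im}) and Formula (\ref{formula:commutators-x-LSbasis})), after which the induction of Theorem \ref{thm:noncom-binom-coeff-LSbasis} carries over unchanged. Your write-up is a faithful expansion of that reduction, with the partition of contributions to a fixed monomial according to the minimal (first) letter of each Lyndon factor making explicit why the two-letter argument requires no modification.
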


\begin{proof}
By Lemma \ref{Lemma 2.4}, Formulas (\ref{formula:recursion-SH_im}), (\ref{formula:commutators-x-LSbasis}) and the induction on $\sum_{x\in X}i_{x}$, we conclude that $C_{E_{\alpha_{k}}}$ is also given by Formula (\ref{coefficient of Lyndon-Shirshov basis}). Thus, the rest of the proof is the same as that in Theorem \ref{thm:noncom-binom-coeff-LSbasis}.
\end{proof}

Note that Corollary \ref{cor:noncomm-binom-thm-LSbasis} and Corollary \ref{cor:particular-case-noncom-binom} can be also generalized to the multinomial case.\\

We compute the following shuffle type polynomials $\mathcal{SH}_{k,n-k}(y,x)$ in terms of the Lyndon-Shirshov basis for $2\le n\le 4$, which will be applied later on.  Let $(X,\prec)=(\{1,2\},1\prec 2)$. For the sake of the notations of commutators,  we write  $E_{1}$ for $x$ and $E_{2}$ for  $y$.
\begin{example}\label{eg.:SH2} 
If $n=2$, then 
\begin{align*} 
\mathcal{SH}_{0,2}(E_2,E_1) &=E_{1}^{2},\\
\mathcal{SH}_{1,1}(E_2,E_1) &=E_{2}E_{1}+E_{1}E_{2}=2E_2E_1+E_{12},\\
\mathcal{SH}_{2,0}(E_2,E_1) &=E_{2}^{2}.
\end{align*}
Thus $(E_1+E_2)^{2}=(E_{1}+E_{2})^{2}=E_{1}^{2}+2E_2E_1+E_{12}+E_{2}^{2}$.\\

For multinomial case, let $(X,\prec)=(\{1,2,3\},1\prec 2\prec 3)$ in Corollary \ref{cor:noncom-multinomial-thm}.  If $n=2$, then
$$(E_{1}+E_{2}+E_{3})^{2}=E_{1}^{2}+2E_{2}E_{1}+E_{12}+E_{2}^{2}+2E_{3}E_{1}+E_{13}+2E_{3}E_{2}+E_{23}+E_{3}^{2}.$$
\end{example}

\begin{example}\label{eg.:SH3} 
For $n=3$, we have
\begin{align*} 
\mathcal{SH}_{0,3}(E_{2},E_{1}) &=E_{1}^{3},\\
\mathcal{SH}_{1,2}(E_{2},E_{1}) &=E_{2}E_{1}^{2}+E_{1}E_{2}E_{1}+E_{1}^{2}E_{2}=3E_2E_1^{2}+3E_{12}E_1+E_{112},\\
\mathcal{SH}_{2,1}(E_{2},E_{1}) &=E_{2}^{2}E_{1}+E_{2}E_{1}E_{2}+E_{1}E_{2}^{2}=3E_2^{2}E_1+3E_2E_{12}+E_{122},\\
\mathcal{SH}_{3,0}(E_{2},E_{1}) &=E_{2}^{3}.
\end{align*}
Therefore, 
\begin{align*}
(E_{1}+E_{2})^{3}= E_{1}^{3}+ 3E_2E_1^{2}+3E_{12}E_1+E_{112} + 3E_2^{2}E_1+3E_2E_{12}+E_{122} + E_{2}^{3}.
\end{align*}
\end{example}

\begin{example}\label{eg.:SH4} 
For $n=4$, we obtain
\begin{align*}
\mathcal{SH}_{0,4}(E_{2},E_{1}) &=E_{1}^{4},\\
\mathcal{SH}_{1,3}(E_{2},E_{1}) &=4E_{2}E_{1}^{3}+6E_{12}E_{1}^{2}+4E_{112}E_{1}+E_{1112},\\
\mathcal{SH}_{2,2}(E_{2},E_{1}) &=6E_{2}^{2}E_{1}^{2}+12E_{2}E_{12}E_{1}+4E_{2}E_{112}+4E_{122}E_{1}+3E_{12}^{2}+E_{1122},\\
\mathcal{SH}_{3,1}(E_{2},E_{1}) &=4E_{2}^{3}E_{1}+6E_{2}^{2}E_{12}+4E_{2}E_{122}+E_{1222},\\
\mathcal{SH}_{4,0}(E_{2},E_{1}) &=E_{2}^{4},
\end{align*}
Therefore, 
\begin{align*}
(E_{1}+E_{2})^{4}= & E_{1}^{4}+ 4E_{2}E_{1}^{3}+6E_{12}E_{1}^{2}+4E_{112}E_{1}+E_{1112} \\
&+ 6E_{2}^{2}E_{1}^{2}+12E_{2}E_{12}E_{1}+4E_{2}E_{112}+4E_{122}E_{1}+3E_{12}^{2}+E_{1122}\\
& + 4E_{2}^{3}E_{1}+6E_{2}^{2}E_{12}+4E_{2}E_{122}+E_{1222}
+ E_{2}^{4},
\end{align*}
\end{example}
For  examples  of higher orders we  refer to Appendix \ref{Appendix A}.

\section{Noncommutative binomial theorem given by the shuffle type polynomials with respect to a $\sigma$-derivation}\label{Section 3} 

\subsection{Binomial theorem given by the shuffle type polynomials with respect to a $\sigma$-derivation}\label{Section 3.1} 
\qquad

Let $A$ be a unital associative algebra and $x\in A$. Suppose that there is an algebra homomorphism $\sigma: A\rightarrow A$. We define a left $\sigma$-adjoint action $\operatorname{ad}_{\sigma}x: A\rightarrow A$  by
\begin{align*}
\operatorname{ad}_{\sigma}x(w):=[x,w]_{\sigma}=xw-\sigma(w)x, \quad  w\in A
\end{align*}
It is easy to see that 
\begin{align}
\operatorname{ad}_{\sigma}x(uv)=\operatorname{ad}_{\sigma}x(u)v+\sigma(u)\operatorname{ad}_{\sigma}x(v),\quad  u,v\in A.
\end{align}
Thus $\operatorname{ad}_{\sigma}x$ is a $\sigma$-derivaion. 

 Let $x,y\in A$ and $\sigma$ as above.  We define 
  $$(\operatorname{ad}_{\sigma}x+y)(z):=\operatorname{ad}_{\sigma}x(z)+yz, \qquad \forall z\in A.$$ 
It is clear that $(\operatorname{ad}_{\sigma}x+y)(1)=y$ and $(\operatorname{ad}_{\sigma}x+y)^{2}(1)=\operatorname{ad}_{\sigma}x(y)+y^{2}=xy-\sigma(y)x+y^{2}$.  

Using the commuting relation $xy=\sigma(y)x + \operatorname{ad}_{\sigma}x(y)$, we obtain the following examples of the binomial formulas of low orders.
\begin{example}
\begin{align*}
(x+y)^{2} 
= & x^{2}+(y+\sigma(y))x+(\operatorname{ad}_{\sigma}x(y)+y^{2})\\
= & x^{2}+(\left(\operatorname{ad}_{\sigma}x+y)\circ \sigma+\sigma\circ (\operatorname{ad}_{\sigma}x+y)\right)(1)x+(\operatorname{ad}_{\sigma}x+y)^{2}(1)\\
= & \mathcal{SH}_{0,2}(\operatorname{ad}_{\sigma}x+y,\sigma)(1) x^{2} +  \mathcal{SH}_{1,1}(\operatorname{ad}_{\sigma}x+y,\sigma)(1) x + \mathcal{SH}_{2,0}(\operatorname{ad}_{\sigma}x+y,\sigma)(1).\\
(x+y)^{3} =& x^{3}+(y+\sigma(y)+\sigma^{2}(y))x^{2}+(\operatorname{ad}_{\sigma}x(y)+\operatorname{ad}_{\sigma}x(\sigma(y))+y^{2}+y\sigma(y)+\sigma(\operatorname{ad}_{\sigma}x(y))\\
&+ \sigma(y)\sigma(y))x+(\operatorname{ad}_{\sigma}^{2}x(y)+\sigma(y)\operatorname{ad}_{\sigma}x(y)+\operatorname{ad}_{\sigma}x(y)y+y\operatorname{ad}_{\sigma}x(y)+y^{3})\\
 = &  x^{3}+((\operatorname{ad}_{\sigma}x+y)\circ \sigma^{2}+\sigma\circ (\operatorname{ad}_{\sigma}x+y)\circ \sigma+\sigma^{2}\circ (\operatorname{ad}_{\sigma}x+y))(1)x^{2}+((\operatorname{ad}_{\sigma}x+y)^{2}\circ \sigma\\
& + (\operatorname{ad}_{\sigma}x+y)\circ \sigma\circ (\operatorname{ad}_{\sigma}x+y)+\sigma\circ (\operatorname{ad}_{\sigma}x+y)^{2})(1)x+(\operatorname{ad}_{\sigma}x+y)^{3}(1)\\
= & \mathcal{SH}_{0,3}(\operatorname{ad}_{\sigma}x+y,\sigma)(1) x^{3} +
\mathcal{SH}_{1,2}(\operatorname{ad}_{\sigma}x+y,\sigma)(1) x^{2} 
+ \mathcal{SH}_{2,1}(\operatorname{ad}_{\sigma}x+y,\sigma)(1) x \\
& + \mathcal{SH}_{3,0}(\operatorname{ad}_{\sigma}x+y,\sigma)(1).
\end{align*}
\end{example}

In general, we have the following noncommutative binomial formula.

\begin{theorem}\label{thm:binom-formula-shuffle-poly-sigma-derivation}
Let $A$ be a unital associative algebra,  $x,y\in A$, and $\sigma$ an algebra endomorphism of $A$. 
Then the following holds for $n\ge 0$,
\begin{align}\label{binomial-shuffle-sigma}{}
(x+y)^{n}=\sum_{k=0}^{n}\widehat{\mathcal{SH}}_{k,n-k}(1)x^{n-k},
\end{align}
where $\widehat{\mathcal{SH}}_{k,n-k}:=\mathcal{SH}_{k,n-k}(\operatorname{ad}_{\sigma}x+y,\sigma)$ and $\operatorname{ad}_{\sigma}x(a):=xa-\sigma(a)x$ for $a\in A$.
\end{theorem}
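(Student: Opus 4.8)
The plan is to prove Formula~(\ref{binomial-shuffle-sigma}) by induction on $n$, tracking how the operators $\operatorname{ad}_{\sigma}x$, $y$, and $\sigma$ interact with left-multiplication by $x$. The base cases $n=0,1$ are immediate since $\widehat{\mathcal{SH}}_{0,0}(1)=1$ and $\widehat{\mathcal{SH}}_{1,0}(1)=(\operatorname{ad}_{\sigma}x+y)(1)=y$, $\widehat{\mathcal{SH}}_{0,1}(1)=\sigma(1)=1$. The central computational observation I would isolate first is the commuting relation $xy=\sigma(y)x+\operatorname{ad}_{\sigma}x(y)$ recorded before the theorem, and more generally that for any $a\in A$,
\begin{align*}
x\cdot a = \operatorname{ad}_{\sigma}x(a)+\sigma(a)x.
\end{align*}
This single identity says that multiplying $(x+y)^{n}$ on the left by $(x+y)$ and pushing all copies of $x$ to the right converts left-multiplication by $x$ into the operator $\operatorname{ad}_{\sigma}x$ acting on the coefficient, plus a $\sigma$-twisted term that keeps one more power of $x$ on the right.

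First I would assume the formula holds for $n$ and compute $(x+y)^{n+1}=(x+y)(x+y)^{n}=(x+y)\sum_{k=0}^{n}\widehat{\mathcal{SH}}_{k,n-k}(1)x^{n-k}$. Splitting into the $x\cdot$ and $y\cdot$ contributions, the $y$-part directly yields $\sum_{k}y\,\widehat{\mathcal{SH}}_{k,n-k}(1)x^{n-k}$, while for the $x$-part I would apply the commuting identity repeatedly to move $x$ across the coefficient $\widehat{\mathcal{SH}}_{k,n-k}(1)$. The key step is that the $x$-part produces two terms: one in which $x$ becomes $\operatorname{ad}_{\sigma}x$ acting on the coefficient (raising the total degree of the shuffle operator while fixing the power of $x$), and one in which $\sigma$ is applied to the coefficient and an extra power of $x$ is appended on the right. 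Collecting by powers $x^{n+1-k}$ and using the definition $(\operatorname{ad}_{\sigma}x+y)(z)=\operatorname{ad}_{\sigma}x(z)+yz$, the coefficient of $x^{n+1-k}$ should assemble into $(\operatorname{ad}_{\sigma}x+y)\widehat{\mathcal{SH}}_{k-1,n-k+1}(1)+\sigma\,\widehat{\mathcal{SH}}_{k,n-k}(1)$, which is exactly $\widehat{\mathcal{SH}}_{k,n+1-k}(1)$ by the recursion~(\ref{formula:recursion1-shuffle-poly}) with $y$ replaced by $\operatorname{ad}_{\sigma}x+y$ and $x$ replaced by $\sigma$.

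The main obstacle is the bookkeeping in the $x$-part: moving a single $x$ past the product $\widehat{\mathcal{SH}}_{k,n-k}(1)\cdot x^{n-k}$ is not a one-step operation, because the coefficient $\widehat{\mathcal{SH}}_{k,n-k}(1)$ is itself a sum of words in $\operatorname{ad}_{\sigma}x(y)$-type elements and $\sigma$-twists of them, and one must verify that the $\sigma$-derivation property $\operatorname{ad}_{\sigma}x(uv)=\operatorname{ad}_{\sigma}x(u)v+\sigma(u)\operatorname{ad}_{\sigma}x(v)$ is compatible with the shuffle recursion at the level of operators applied to $1$. The cleanest way to sidestep ambiguity is to reinterpret the claim entirely at the operator level: define $L_{x}$ as left-multiplication by $x$ and show that $L_{x}=\operatorname{ad}_{\sigma}x+R_{x}\circ\sigma$ where $R_{x}$ is right-multiplication, then prove the operator identity $L_{(x+y)}^{n}(1)=\sum_{k}\widehat{\mathcal{SH}}_{k,n-k}(1)x^{n-k}$ where all operators commute appropriately because $\sigma$ intertwines $\operatorname{ad}_{\sigma}x+y$ with right-multiplication by $x$. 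Once this intertwining is verified, the inductive step reduces precisely to the shuffle recursion, and the proof closes.
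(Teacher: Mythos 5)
Your proposal is correct and follows essentially the same route as the paper's proof: induction on $n$, expanding $(x+y)\cdot(x+y)^{n}$ via the identity $xa=\sigma(a)x+\operatorname{ad}_{\sigma}x(a)$, and reassembling the coefficient of $x^{n+1-k}$ through the recursion $\widehat{\mathcal{SH}}_{k,n+1-k}=\sigma\circ\widehat{\mathcal{SH}}_{k,n-k}+(\operatorname{ad}_{\sigma}x+y)\circ\widehat{\mathcal{SH}}_{k-1,n-k+1}$ inherited from Formula~(\ref{formula:recursion1-shuffle-poly}). One remark: the ``main obstacle'' you describe is not actually an obstacle, since $\widehat{\mathcal{SH}}_{k,n-k}(1)$ is a single element of $A$ and the identity $xa=\operatorname{ad}_{\sigma}x(a)+\sigma(a)x$ applies to it in one step by the very definition of $\operatorname{ad}_{\sigma}x$, regardless of its internal structure as a sum of words; your operator reformulation $L_{x}=\operatorname{ad}_{\sigma}x+R_{x}\circ\sigma$ is just this definition restated, so no extra compatibility check between the $\sigma$-derivation property and the shuffle recursion is needed.
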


\begin{proof}
By Formula (\ref{formula:recursion1-shuffle-poly}) we have
\begin{align}\label{formula:noncommutative-binomial-2}
\widehat{\mathcal{SH}}_{i,j}=\sigma\circ \widehat{\mathcal{SH}}_{i,j-1}+(\operatorname{ad}_{\sigma}x+y)\circ \widehat{\mathcal{SH}}_{i-1,j}.
\end{align}
By induction on $n$, we obtain
\begin{align*}
&(x+y)^{n+1}\\
 =&(x+y)\sum_{k=0}^{n}\widehat{\mathcal{SH}}_{k,n-k}(1)x^{n-k}\\
=&\sum_{k=0}^{n}\left(\sigma(\widehat{\mathcal{SH}}_{k,n-k}(1))x+\operatorname{ad}_{\sigma}x(\widehat{\mathcal{SH}}_{k,n-k}(1))+y\widehat{\mathcal{SH}}_{k,n-k}(1)\right)x^{n-k}\\
=& \sigma^{n+1}(1)x^{n+1}+\sum_{k=1}^{n}\sigma \left(\widehat{\mathcal{SH}}_{k,n-k}(1)\right)x^{n-k+1}+\sum_{k=0}^{n-1}(\operatorname{ad}_{\sigma}x+y)\left(\widehat{\mathcal{SH}}_{k,n-k}(1)\right)x^{n-k}+(\operatorname{ad}_{\sigma}x+y)^{n+1}(1)\\
=& \sigma^{n+1}(1)x^{n+1}+\sum_{k=1}^{n}\left(\sigma\circ \widehat{\mathcal{SH}}_{k,n-k}+(\operatorname{ad}_{\sigma}x+y)\circ \widehat{\mathcal{SH}}_{k-1,n-k+1}\right)(1)x^{n-k+1}+(\operatorname{ad}_{\sigma}x+y)^{n+1}(1)\\
=&\sum_{k=0}^{n+1}\widehat{\mathcal{SH}}_{k,n+1-k}(1)x^{n+1-k}.
\end{align*}
\end{proof}

For example, $\widehat{\mathcal{SH}}_{0,0}=\id_{A}$, $\widehat{\mathcal{SH}}_{0,1}=\sigma$ and  $\widehat{\mathcal{SH}}_{1,0}=\operatorname{ad}_{\sigma} x +y$. Then $\widehat{\mathcal{SH}}_{0,0}(1)=1$, $\widehat{\mathcal{SH}}_{0,1}(1)=\sigma(1)=1$ and $\widehat{\mathcal{SH}}_{1,0}=(\operatorname{ad}_{\sigma} x +y)(1)=y$. In general, it is easy to see  that $\widehat{\mathcal{SH}}_{0,n}(1)=\sigma^{n}(1)=1$ and 
\begin{align*}
\widehat{\mathcal{SH}}_{n,0}(1)=(\operatorname{ad}_{\sigma}x+y)^{n}(1).
\end{align*}
Theorem \ref{thm:noncom-binom-coeff-LSbasis} gives an explicit formula for $\mathcal{SH}_{k,n-k}(\operatorname{ad}_{\sigma}x+y,\sigma)$ by the Lyndon-Shirshov basis.

Formula (\ref{binomial-shuffle-sigma}) may provides a rough solution to the question of Mansour--Schork \cite[Research problem 8.1]{MS2016}, which requires a binomial formula for the Ore extension $\mathbb{C}[y][x;\sigma,\delta]$. Note that $xy=\sigma(y)x+\delta(y)$. Using Formula (\ref{binomial-shuffle-sigma}), we have
\begin{align*}
(x+y)^{n}=\sum_{k=0}^{n}\mathcal{SH}_{k,n-k}\left(\delta+y,\sigma\right)(1)x^{n-k}.
\end{align*}
The case $\sigma=id$ has an explicit expression given by the classical Bell polynomials, see Formula (\ref{answer-for-case-sigma=id}).

\begin{remark}\label{rmk:SHn,n-k(1)-to-Bell-diff-poly}
It is clear that the following holds:
\begin{align}\label{formula:sigma-adjoint-derivation}
\sigma^{m}\circ (\operatorname{ad}_{\sigma}x+y)=\left(\operatorname{ad}_{\sigma}(\sigma^{m}(x))+\sigma^{m}(y)\right)\circ \sigma^{m},\qquad  m\ge 0.
\end{align}
Denote by $D_m$ the operator 
$\operatorname{ad}_{\sigma}(\sigma^{m}(x))+\sigma^{m}(y),  m\ge 0$. 
Note that $\widehat{\mathcal{SH}}_{0,n}=\sigma^{n}$. By Formulas (\ref{formula:noncommutative-binomial-2}) and (\ref{formula:sigma-adjoint-derivation}) we can prove that 
\begin{align}\label{shuffle-sigma}
\widehat{\mathcal{SH}}_{k,n-k}=\sum_{0\le m_{1}\le m_{2}\le \ldots \le m_{k}\le  n-k }^{}D_{m_{1}}D_{m_{2}}\ldots D_{m_{k}}\sigma^{n-k}, \quad  1\le k\le n.
\end{align}
If $\sigma$ is an automorphism of $A$, then Formula (\ref{formula:sigma-adjoint-derivation}) implies 
\begin{align}\label{formula 20}
D_{m}=\sigma^{m}\circ D_{0}\circ\sigma^{-m}.
\end{align}
Since $\sigma(1)=1$, by Formulas (\ref{shuffle-sigma}) and (\ref{formula 20}), we have
\begin{align}\label{shuffle-sigma-iso} 
\widehat{\mathcal{SH}}_{k,n-k}(1)=\sum_{0\le m_{1}\le m_{2}\le \ldots \le m_{k}\le  n-k }^{}\left(\sigma^{m_{1}}D_{0}\sigma^{m_{2}-m_{1}}D_{0}\sigma^{m_{3}-m_{2}}\ldots \sigma^{m_{k}-m_{k-1}}D_{0}\right)(1).
\end{align}
\quad\\ 
Now, assume $\sigma=\id$. Note that $\widehat{\mathcal{SH}}_{0,n}(1)=1$ and
\begin{align*}
\sum_{0\le m_{1}\le m_{2}\le \ldots \le m_{k}\le  n-k }^{}1=\binom{n}{k}.
\end{align*}
Then the formula (\ref{shuffle-sigma-iso}) yields:
\begin{align*}
\widehat{\mathcal{SH}}_{k,n-k}(1)=\binom{n}{k}D_{0}^{k}(1)=\binom{n}{k}(\operatorname{ad}_{\id}x+y)^{k}(1).
\end{align*}
$(\operatorname{ad}_{\id}x+y)^{k}(1)$ is the Bell differential polynomial (see Formula (\ref{formula 32})). 
\end{remark}

\subsection{Binomial theorem given by $q$-Bell differential polynomials}\label{Section 3.2} \qquad

In Theorem \ref{thm:binom-formula-shuffle-poly-sigma-derivation}, if $A=\K\langle x,y\rangle$ and $\operatorname{ad}_{\sigma} x$ is a $q$-derivation, then Formula (\ref{shuffle-sigma-iso}) yields a $q$-version of the Bell differential polynomials, see Section \ref{Section 4.1}. First we recall the $q$-binomial coefficients, or Gaussian binomial coefficients. The $q$-analogue of integer $(n)_{q}$ for $n\in \mathbb{N}$ is defined by
$$
(n)_{q}=1+q+q^{2}+\cdots+q^{n-1}=
\left\{\begin{array}{ll}
\frac{1-q^{n}}{1-q} ,& q \neq 1, \\
n ,&  q=1,
\end{array}\right.{}
$$ 
and set $(0)_{q}=0$. The $q$-analogue of factorials $(n)_{q}!$ are defined as 
$$
(n)_{q}!=(1)_{q}(2)_{q}\ldots (n)_{q},
$$
and the $q$-binomial coefficients are defined as 
$$
\binom{n}{k}_{q}=\frac{(n)_{q}!}{(k)_{q}!(n-k)_{q}!}.
$$

We define a map $\sigma:\K\langle x,y\rangle\rightarrow \K \langle x,y\rangle$ by $\sigma(x)=qx$ and $\sigma(y)=qy$, which gives rise to an algebra isomorphism $\sigma:\K\langle x,y\rangle\rightarrow \K \langle x,y\rangle$. Then, for any homogeneous element $w\in \K\langle x,y\rangle$,
\begin{align*}
\sigma(w)=q^{|w|}w.
\end{align*}
In this case,  we  write $\operatorname{ad}_{q}x$ for  $\operatorname{ad}_{\sigma}x$.  Then
\begin{align*}
\operatorname{ad}_{q}x(w) &=[x,w]_{q}=xw-q^{|w|}wx,\\
D_{0} &=\operatorname{ad}_{q}x+y.
\end{align*}
It follows from Formula (\ref{shuffle-sigma-iso}) that 
\begin{align}\label{formula:SH-to-q-Bell-diff-poly}
\widehat{\mathcal{SH}}_{k,n-k}(1)=\sum_{0\le m_{1}\le m_{2}\le \ldots \le m_{k}\le  n-k }^{}q^{\sum_{i=1}^{k}m_{i}}(\operatorname{ad}_{q}x+y)^{k}(1).
\end{align}
From \cite[Theorem 3.1]{A1998} or \cite[Lemma (1)]{J1996}, we know 
\begin{align*}
\sum_{0\le m_{1}\le m_{2}\le \ldots \le m_{k}\le  n-k }^{}q^{\sum_{i=1}^{k}m_{i}}=\binom{n}{k}_{q}.
\end{align*}
It follows that Formula (\ref{formula:SH-to-q-Bell-diff-poly}) yields:
\begin{align}\label{q-Bell differential polynomials}
\widehat{\mathcal{SH}}_{k,n-k}(1)=\binom{n}{k}_{q}(\operatorname{ad}_{q}x+y)^{k}(1).
\end{align}
Formula (\ref{q-Bell differential polynomials}) can be used to define a $q$-analogue of the  Bell differential polynomials.

\begin{definition}
The \textit{q-Bell differential polynomials} $\widehat{B}_{n,q}:=\widehat{B}_{n,q}(x,y)$ in $\K\langle x,y\rangle$ are defined recursively by $\widehat{B}_{0,q}=1$ and 
\begin{align*}
\widehat{B}_{n,q}=(\operatorname{ad}_{q}x+y)^{n}(1), \quad n\ge 1.
\end{align*}
\end{definition}

\begin{corollary}\label{cor:binom-formula-by-q-Bell-diff-poly}
Assume $\sigma(w)=q^{|w|}w$ for any homogeneous element $w\in\K\langle x,y\rangle$. Then we have
\begin{align}
(x+y)^{n}=\sum_{k=0}^{n}\binom{n}{k}_{q}\widehat{B}_{k,q}(x,y)x^{n-k},
\end{align}
\begin{proof}
It follows from Theorem \ref{thm:binom-formula-shuffle-poly-sigma-derivation} and Formula (\ref{q-Bell differential polynomials}).
\end{proof}
\end{corollary}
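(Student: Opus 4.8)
The plan is to specialize Theorem~\ref{thm:binom-formula-shuffle-poly-sigma-derivation} to the endomorphism $\sigma$ given by $\sigma(w)=q^{|w|}w$ and then translate the resulting coefficient operators into $q$-Bell differential polynomials. First I would confirm that this $\sigma$ satisfies the hypothesis of Theorem~\ref{thm:binom-formula-shuffle-poly-sigma-derivation}: because $\K\langle x,y\rangle$ is graded by total length and that grading is multiplicative, scaling the degree-$d$ component by $q^{d}$ respects products, so $\sigma$ is a genuine algebra endomorphism (indeed an automorphism when $q\neq 0$). With the hypothesis in place, Theorem~\ref{thm:binom-formula-shuffle-poly-sigma-derivation} yields
\[
(x+y)^{n}=\sum_{k=0}^{n}\widehat{\mathcal{SH}}_{k,n-k}(1)\,x^{n-k},
\]
where now $\widehat{\mathcal{SH}}_{k,n-k}=\mathcal{SH}_{k,n-k}(\operatorname{ad}_{q}x+y,\sigma)$.

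Next I would rewrite each coefficient. For $1\le k\le n$, Formula~(\ref{q-Bell differential polynomials}) evaluates the operator at $1$ as $\widehat{\mathcal{SH}}_{k,n-k}(1)=\binom{n}{k}_{q}(\operatorname{ad}_{q}x+y)^{k}(1)$, and the very definition of the $q$-Bell differential polynomials identifies $(\operatorname{ad}_{q}x+y)^{k}(1)$ with $\widehat{B}_{k,q}(x,y)$. The term $k=0$ is handled on its own: $\widehat{\mathcal{SH}}_{0,n}(1)=\sigma^{n}(1)=1$, which equals $\binom{n}{0}_{q}\widehat{B}_{0,q}(x,y)$ under the conventions $\binom{n}{0}_{q}=1$ and $\widehat{B}_{0,q}=1$. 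Substituting these values back into the displayed expansion gives
\[
(x+y)^{n}=\sum_{k=0}^{n}\binom{n}{k}_{q}\widehat{B}_{k,q}(x,y)\,x^{n-k},
\]
which is the asserted identity.

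Since all of the substantive work is already isolated in Theorem~\ref{thm:binom-formula-shuffle-poly-sigma-derivation} and in the derivation of Formula~(\ref{q-Bell differential polynomials}), the proof is a matter of assembly and there is no real obstacle. The one point I would treat carefully is the provenance of Formula~(\ref{q-Bell differential polynomials}): it rests on the Gaussian-coefficient identity $\sum_{0\le m_{1}\le\cdots\le m_{k}\le n-k}q^{\sum_{i}m_{i}}=\binom{n}{k}_{q}$ applied to Formula~(\ref{shuffle-sigma-iso}), so I would make sure the nested index ranges appearing there match the hypotheses of that identity exactly before quoting it.
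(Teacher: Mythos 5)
Your proposal is correct and follows exactly the paper's route: the paper's proof is precisely the two-line assembly of Theorem~\ref{thm:binom-formula-shuffle-poly-sigma-derivation} with Formula~(\ref{q-Bell differential polynomials}), and your additional checks (that $\sigma(w)=q^{|w|}w$ is an algebra endomorphism, and that the $k=0$ term matches under the conventions $\binom{n}{0}_{q}=1$, $\widehat{B}_{0,q}=1$) are just the routine details the paper leaves implicit.
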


\begin{example}
Let $y^{(0)}:=y$ and $y^{(n)}:=\operatorname{ad}_{q}x(y^{(n-1)})=xy^{(n-1)}-q^{n}y^{(n-1)}x$, $n\ge 1$.
Then
\begin{align*}
\widehat{B}_{0,q} &=1,\\
\widehat{B}_{1,q} &=y,\\
\widehat{B}_{2,q} &=y^{(1)}+y^{2},\\
\widehat{B}_{3,q} &=y^{(2)}+(2)_{q}yy^{(1)}+y^{(1)}y+y^{3},\\
\widehat{B}_{4,q} &=y^{(3)}+(3)_{q}yy^{(2)}+(3)_{q}(y^{(1)})^{2}+y^{(2)}y+(3)_{q}yy^{(1)}+(2)_{q}yy^{(1)}y+y^{(1)}y^{2}+y^{4},
\end{align*}
and
\begin{align*}
(x+y)^{2}&=x^{2}+(2)_{q}yx+(y^{(1)}+y^{2})=\widehat{B}_{0,q}x^{2}+(2)_{q}\widehat{B}_{1,q}x+\widehat{B}_{2,q},\\
(x+y)^{3}&=x^{3}+(3)_{q}yx^{2}+(3)_{q}y^{(1)}x+(3)_{q}y^{2}x+y^{(2)}+(2)_{q}yy^{(1)}+y^{(1)}y+y^{3}\\
& =\widehat{B}_{0,q}x^{3}+(3)_{q}\widehat{B}_{1,q}x^{2}+(3)_{q}\widehat{B}_{2,q}x+\widehat{B}_{3,q}.
\end{align*}
\end{example}

\begin{remark}\label{rmk:noncom-version-of-q-Bell-poly}
The \textit{partial q-Bell differential polynomials} $\widehat{B}_{n,k,q}$ in $\K\langle x,y\rangle$ can be recursively defined by $\widehat{B}_{n,k,q}:=0$ for $k=0$ or $k>n$, $\widehat{B}_{1,1,q}=\widehat{B}_{1,q}=y$ and 
\begin{align}\label{formula 25}
\widehat{B}_{n,k,q}=y\widehat{B}_{n-1,k-1,q}+\operatorname{ad}_{q}x(\widehat{B}_{n-1,k,q}).
\end{align}
Using Formula (\ref{formula 25}), we can prove that $\widehat{B}_{n,k,q}$ has the recursion: 
\begin{align}\label{q-Bell differential recursion}
\widehat{B}_{n,k,q}=\sum_{\ell=k-1}^{n-1}\binom{n-1}{\ell}_{q} \widehat{B}_{\ell,k-1,q}y^{(n-\ell)}.
\end{align}
We mention that the recursion (\ref{q-Bell differential recursion}) is equivalent to the recursion of the commutative partial $q$-Bell polynomials ${\bf B}_{n,k,q}$ introduced by Johnson \cite{J1996} in the case where $y^{(v)}y^{(u)}=y^{(u)}y^{(v)}$ for $0\le u<v$. Thus $\widehat{B}_{n,k,q}$ can be considered as the noncommutative version of ${\bf B}_{n,k,q}$.
\end{remark}

From the $q$-Bell differential polynomial $\widehat{B}_{n,k,q}$, we deduce the $q$-commutative Bell differential polynomials and the corresponding binomial theorem, see Subsection \ref{Section 5.2}.

\section{Bell differential polynomials and shuffle type polynomials}\label{Section 4}

From Sections \ref{Section 3}, we see that the shuffle type polynomials are related to the Bell differential polynomials and their $q$-analogues. Now we study more their relationship using the Lyndon-Shirshov basis.

\subsection{\bf Binomial theorem given by Bell differential polynomials}\label{Section 4.1} 
\qquad

First we recall the Bell differential polynomials.

\begin{definition}
{\cite{SR1998}}
\label{def:Bell-diff-poly}
Let $A$ be an associative algebra and $x,y\in A$. The \textit{Bell differential polynomials} $\widehat{B}_{n}=\widehat{B}_{n}[x,y]$ in $y$ with respect to $x$ and their dual $\widehat{B}_{n}^{*}=B_{n}^{*}[x,y]$ are defined inductively by
\begin{align*}
& \widehat{B}_{n+1}:=(\operatorname{ad}_{L}x)(\widehat{B}_{n})+y\widehat{B}_{n},\qquad \widehat{B}_{0}:=1\\
& \widehat{B}_{n+1}^{*}:=(\operatorname{ad}_{R}x)(\widehat{B}_{n}^{*})+\widehat{B}_{n}^{*}y,\qquad \widehat{B}_{0}^{*}:=1,
\end{align*}
where $\operatorname{ad}_{L}x,\operatorname{ad}_{R}x: A\rightarrow A$, $\operatorname{ad}_{L}x(w):=[x,w]=xw-wx$ and $\operatorname{ad}_{R}x(w):=[w,x]=wx-xw$.   The \textit{partial Bell differential polynomials} $\widehat{B}_{n,k}:=\widehat{B}_{n,k}(x,y)$ are the homogeneous parts of $\widehat{B}_{n}$ such that for any constant $\lambda$
\begin{align*}
\widehat{B}_{n}=\sum_{k=1}^{n}\widehat{B}_{n,k}, \qquad \widehat{B}_{n,k}(x,\lambda y)=\lambda^{k}\widehat{B}_{n,k}(x,y). 
\end{align*}
\end{definition}
\noindent Set $\widehat{B}_{n,k}:=0$ for $k=0$ or $k>n$. $\widehat{B}_{n,k}(E_{1},E_{2})$ has the recursion as follows (see the proof of \cite[Theorem 10]{SR1998}):
\begin{align}\label{formula 29}
\widehat{B}_{n,k}(E_{1},E_{2})=E_{2}\widehat{B}_{n-1,k-1}(E_{1},E_{2})+ \operatorname{ad}_{L} E_{1}(\widehat{B}_{n-1,k}(E_{1},E_{2})).
\end{align}

\begin{lemma}{\cite[Theorem 1]{SR1998}}\label{Lemma 4.2}
For every $x,y\in A$ and $n\in \mathbb{N}$, the following holds
\begin{align}
\label{binomial formula of Schimming-Rida} (x+y)^{n}&=\sum_{k=0}^{n}\binom{n}{k}\widehat{B}_{k}(x,y)x^{n-k}\\
&=\sum_{k=0}^{n}\binom{n}{k}x^{n-k}\widehat{B}_{k}^{*}(x,y).
\end{align}
\end{lemma}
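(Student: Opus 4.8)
The plan is to prove the binomial formula of Schimming--Rida (Lemma~\ref{Lemma 4.2}) by induction on $n$, using the defining recursion of the Bell differential polynomials $\widehat{B}_{k}$. The base case $n=0$ is immediate since $\widehat{B}_{0}=1$. For the inductive step, I would start from $(x+y)^{n+1}=(x+y)(x+y)^{n}$ and substitute the inductive hypothesis $(x+y)^{n}=\sum_{k=0}^{n}\binom{n}{k}\widehat{B}_{k}(x,y)x^{n-k}$. The idea is to push the factor $(x+y)$ inside the sum and rewrite everything in terms of the operators $\operatorname{ad}_{L}x$ and left multiplication by $y$.

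The key computational device is the identity $(x+y)\widehat{B}_{k}=x\widehat{B}_{k}+y\widehat{B}_{k}$ together with the observation that $x\widehat{B}_{k}=\operatorname{ad}_{L}x(\widehat{B}_{k})+\widehat{B}_{k}x=[x,\widehat{B}_{k}]+\widehat{B}_{k}x$. Hence
\begin{align*}
(x+y)\widehat{B}_{k}=\bigl(\operatorname{ad}_{L}x(\widehat{B}_{k})+y\widehat{B}_{k}\bigr)+\widehat{B}_{k}x=\widehat{B}_{k+1}+\widehat{B}_{k}x,
\end{align*}
where the last equality is exactly the defining recursion $\widehat{B}_{k+1}=(\operatorname{ad}_{L}x)(\widehat{B}_{k})+y\widehat{B}_{k}$. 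This is the heart of the argument: multiplying by $x+y$ on the left converts $\widehat{B}_{k}$ into $\widehat{B}_{k+1}$ plus a term where an $x$ is shed to the right. Applying this to each summand of the inductive hypothesis gives
\begin{align*}
(x+y)^{n+1}=\sum_{k=0}^{n}\binom{n}{k}\bigl(\widehat{B}_{k+1}+\widehat{B}_{k}x\bigr)x^{n-k}
=\sum_{k=0}^{n}\binom{n}{k}\widehat{B}_{k+1}x^{n-k}+\sum_{k=0}^{n}\binom{n}{k}\widehat{B}_{k}x^{n-k+1}.
\end{align*}

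After reindexing the first sum by $k\mapsto k-1$, the two sums combine and the coefficient of $\widehat{B}_{k}(x,y)x^{n+1-k}$ becomes $\binom{n}{k-1}+\binom{n}{k}=\binom{n+1}{k}$ by the Pascal rule, with the boundary terms $k=0$ and $k=n+1$ matching $\widehat{B}_{0}x^{n+1}$ and $\widehat{B}_{n+1}$ respectively. This yields the desired formula $(x+y)^{n+1}=\sum_{k=0}^{n+1}\binom{n+1}{k}\widehat{B}_{k}(x,y)x^{n+1-k}$, completing the induction. The dual identity is proved symmetrically: one instead writes $(x+y)^{n+1}=(x+y)^{n}(x+y)$, uses $\widehat{B}_{k}^{*}(x+y)=x\widehat{B}_{k}^{*}+\widehat{B}_{k+1}^{*}$ via $\widehat{B}_{k}^{*}x=\operatorname{ad}_{R}x(\widehat{B}_{k}^{*})+x\widehat{B}_{k}^{*}$ and the recursion $\widehat{B}_{k+1}^{*}=(\operatorname{ad}_{R}x)(\widehat{B}_{k}^{*})+\widehat{B}_{k}^{*}y$, and applies the same Pascal-rule bookkeeping.

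I do not expect any genuine obstacle here: the only subtlety is the clean rewriting $x\widehat{B}_{k}=\operatorname{ad}_{L}x(\widehat{B}_{k})+\widehat{B}_{k}x$, which is precisely what makes the defining recursion of $\widehat{B}_{k}$ appear, and the routine reindexing to invoke Pascal's identity. The main point worth stating carefully is that the recursion for $\widehat{B}_{n}$ is designed exactly so that left multiplication by $x+y$ acts as a raising operator modulo shedding an $x$ on the right, which is what makes the binomial coefficients emerge inductively.
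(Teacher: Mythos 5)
Your proof is correct: the identity $(x+y)\widehat{B}_{k}=\widehat{B}_{k+1}+\widehat{B}_{k}x$ (and its right-handed dual $\widehat{B}_{k}^{*}(x+y)=x\widehat{B}_{k}^{*}+\widehat{B}_{k+1}^{*}$) is exactly what the defining recursions supply, and the Pascal-rule bookkeeping closes the induction without trouble. Note, however, that the paper never proves Lemma \ref{Lemma 4.2} directly --- it is quoted from \cite[Theorem 1]{SR1998} --- so the relevant comparison is with the route the paper itself provides for this formula: Theorem \ref{thm:binom-formula-shuffle-poly-sigma-derivation}, proved for a general algebra endomorphism $\sigma$ with $\operatorname{ad}_{\sigma}x(a)=xa-\sigma(a)x$, combined with Remark \ref{rmk:SHn,n-k(1)-to-Bell-diff-poly}, which specializes $\sigma=\id$ to obtain $\widehat{\mathcal{SH}}_{k,n-k}(1)=\binom{n}{k}(\operatorname{ad}_{\id}x+y)^{k}(1)=\binom{n}{k}\widehat{B}_{k}$, i.e.\ Formula (\ref{formula 32}). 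The two inductions are structurally parallel --- the paper's key step $(x+y)w=\sigma(w)x+(\operatorname{ad}_{\sigma}x+y)(w)$ is your rearrangement $xw=[x,w]+wx$ with a $\sigma$ twist --- but the binomial coefficients emerge differently: in your argument they come from Pascal's identity $\binom{n}{k-1}+\binom{n}{k}=\binom{n+1}{k}$ inside the induction, whereas in the paper the induction produces only the shuffle-type operators $\widehat{\mathcal{SH}}_{k,n-k}$, and the coefficient $\binom{n}{k}$ appears afterwards from the count $\sum_{0\le m_{1}\le\ldots\le m_{k}\le n-k}1=\binom{n}{k}$ once all the operators $D_{m}$ collapse to $D_{0}$ for $\sigma=\id$. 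Your route is shorter and self-contained for the stated lemma; the paper's route buys the more general $\sigma$-deformed binomial theorem, from which the $q$-Bell version (Corollary \ref{cor:binom-formula-by-q-Bell-diff-poly}) also follows by the same counting argument with $q$-weights, something Pascal's rule alone does not give.
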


\begin{remark}\label{Remark 4.3}
(a) By the definition and Formula (\ref{binomial formula of Schimming-Rida}), it is evident that $\widehat{B}_{n}=(\operatorname{ad}_{L}x+y)^{n}(1)$ and 
\begin{align}\label{formula 32}
(x+y)^{n}=\sum_{k=0}^{n}\binom{n}{k}(\operatorname{ad}_{L}x+y)^{k}(1)x^{n-k},\quad n\ge 0.
\end{align}
Let $y^{(0)}:=y$ and $y^{(k)}:=\operatorname{ad}_{L}x(y^{(k-1)})$ for $k\ge 1$. If $y^{(i)}y^{(j)}=y^{(j)}y^{(i)}$ for all $0\le i<j$, then $\widehat{B}_{n}$ yields  the classical Bell polynomials:
\begin{align}\label{classical Bell polynomials}
\widehat{B}_{n}(x,y)=\sum_{k_{1}+ \ldots +nk_{n}=n} \frac{n !}{k_{1}!\ldots k_{n}!}\left(\frac{y}{1 !}\right)^{k_{1}} \ldots\left(\frac{y^{(n-1)}}{n !}\right)^{k_{n}}.
\end{align}
Let $x^{(k)}:=(\operatorname{ad}_{R}y)^{k}(x)$ for $k\ge 0$. If $x^{(i)}x^{(j)}=x^{(j)}x^{(i)}$ for all $0\le i<j$, then $\widehat{B}_{n}^{*}$ has the same form:
\begin{align}\label{dual classical Bell polynomials}
\widehat{B}_{n}^{*}(y,x)=\sum_{k_{1}+\ldots +nk_{n}=n} \frac{n !}{k_{1}!\ldots k_{n}!}\left(\frac{x}{1 !}\right)^{k_{1}} \ldots\left(\frac{x^{(n-1)}}{n !}\right)^{k_{n}}.
\end{align}

\noindent It is worth noting that Formula (\ref{binomial formula of Schimming-Rida}) gives an answer to the question in \cite[Research problem 8.1]{MS2016}, which requires a binomial formula for the Ore extension $\mathbb{C}[y][x;\sigma=id,\delta]$. Note that $xy=yx+\delta(y)$ and $[\delta^{i}(y),\delta^{j}(y)]=0$ for $0\le i\le j$. It follows from Formulas (\ref{binomial formula of Schimming-Rida}) and (\ref{classical Bell polynomials}) that
\begin{align}\label{answer-for-case-sigma=id}
(x+y)^{n}=\sum_{k=0}^{n}\binom{n}{k}\left(\sum_{r_{1}+\ldots+kr_{k}=k}^{}\frac{k!}{r_{1}!\ldots r_{k}!}\left(\frac{y}{1!}\right)^{r_{1}} \ldots \left(\frac{\delta^{k-1}(y)}{k!}\right)^{r_{k}}\right)x^{n-k}.
\end{align}

\noindent
(b) Lemma \ref{Lemma 4.2} can be extended to the multinomial expansion in a natural way (see \cite[Proposition 1]{SR1998}):
\begin{align*}
(x+y_{2}+\ldots+y_{m})^{n} &=\sum_{k=0}^{n}\binom{n}{k}\widehat{B}_{k}(x,y_{2}+\ldots+y_{m})x^{n-k},
\end{align*}
where $\widehat{B}_{k}(x,y_{2}+\ldots+y_{m})=(\operatorname{ad}_{L}x+y_{2}+\ldots+y_{m})^{k}(1)$.
\end{remark}

Munthe-Kaas \cite{MK1995} introduced the equivalent form of the noncommutative Bell polynomials.  We follow \cite{LM2011,ELM2014} to recall them. Let $I=\{d_{j}\}_{j=1}^{\infty}$ be an infinite set and $\K\langle I \rangle$ the free algebra with the degree of $d_{j}$ being $j$. Suppose that $\partial: \K\langle I \rangle \rightarrow \K\langle I \rangle$ is a linear derivation defined by $\partial(d_{i})=d_{i+1}$ for $i\ge 1$ such that $\partial(w_{1}w_{2})=\partial(w_{1})w_2+w_{1}\partial(w_{2})$ for $w_{1},w_{2} \in \langle I\rangle$.
\begin{definition}
\label{def:Bell-poly-MK}
The \textit{noncommutative Bell polynomials} $\widetilde{B}_{n}$ are defined inductively by
\begin{align*}
& \widetilde{B}_{0}:=1,\\
& \widetilde{B}_{n}:=(d_{1}+\partial)(B_{n-1}),\quad n\ge 1,
\end{align*}
that is, $\widetilde{B}_{n}=(d_{1}+\partial)^{n}(1),n\ge 0$.
The \textit{partial Bell polynomial} $\widetilde{B}_{n,k}$ is the part of $\widetilde{B}_{n}$ consisting of terms of length $k$. 
\end{definition} 

It is clear that Definition \ref{def:Bell-diff-poly} and Definition \ref{def:Bell-poly-MK} are equivalent. Further, Lundervold--Munthe-Kaas considered the case: $d_{i}:=[\partial,d_{i-1}]=\partial d_{i-1}-d_{i-1}\partial$ for $i>1$, and gave the following binomial formula which is equivalent to Formula (\ref{binomial formula of Schimming-Rida}):
\begin{align}\label{formula:noncomm-binom-LMK}
\left(d_{1}+\partial\right)^{n}=\sum_{k=0}^{n}\binom{n}{k}\widetilde{B}_{k}\left(d_{1}, \ldots, d_{k}\right) \partial^{n-k}.
\end{align}
Moreover, an explicit expression of $\widetilde{B}_{n,k}$ can be found in \cite{LM2011,ELM2014}.

\subsection{Bell differential polynomials and shuffle type polynomials}\label{Section 4.2}
\qquad 

Recall the Bell differential polynomials $\widehat{B}_{n,k}(x,y)$ in Definition \ref{def:Bell-diff-poly}. Let $(X,\preceq)=(\{1,2\},1\prec 2)$, $E_{1}:=x$ and $E_{2}:=y$. Now we rewrite $\widehat{B}_{n,k}(E_{1},E_{2})$ as the linear combinations of products of the Lyndon-Shirshov basis elements. For example,
\begin{align*}
\widehat{B}_{1,1}(E_{1},E_{2})&=E_{2},\\
\widehat{B}_{2,1}(E_{1},E_{2})&=E_{12},\quad\quad\quad \widehat{B}_{2,2}(E_{1},E_{2})=E_{2}^{2},\\
\widehat{B}_{3,1}(E_{1},E_{2})&=E_{112}, \quad\quad\quad \widehat{B}_{3,2}(E_{1},E_{2})=2E_{2}E_{12}+E_{12}E_{2}=3E_{2}E_{12}+E_{122},\\
\widehat{B}_{3,3}(E_{1},E_{2})&=E_{2}^{3},\\
\widehat{B}_{4,1}(E_{1},E_{2})&=E_{1112}, \quad\quad\quad \widehat{B}_{4,2}(E_{1},E_{2})=3E_{2}E_{112}+3E_{12}^{2}+E_{112}E_{2}=4E_{2}E_{112}+3E_{12}^{2}+E_{1122},\\
\widehat{B}_{4,3}(E_{1},E_{2}) &=3E_{2}^{2}E_{12}+2E_{2}E_{12}E_{2}+E_{12}E_{2}^{2}=6E_{2}^{2}E_{12}+4E_{2}E_{122}+E_{1222},\\
\widehat{B}_{4,4}(E_{1},E_{2}) &=E_{2}^{4}.
\end{align*}
From Examples \ref{eg.:SH3} and \ref{eg.:SH4} , it is not difficult to find that the relation between the Bell differential polynomials and the shuffle type polynomials (in terms of the Lyndon-Shirshov basis) is as follows. 
\begin{theorem}\label{bell-shuffle} 
Let $(X,\preceq)=(\{1,2\},1\prec 2)$ and $\mathcal{SH}_{i,j}:=\mathcal{SH}_{i,j}(E_{2},E_{1})$ for $i,j\ge 0$. \\
$\mathrm(a)$ Let $\mathcal{SH}_{i,j}^{[2]}$ be the sum of all the terms in $\mathcal{SH}_{i,j}$ of which the rightmost element is not $E_{1}$. Then $\widehat{B}_{n,k}(E_{1},E_{2})=\mathcal{SH}_{k,n-k}^{[2]}$.\\
$\mathrm(b)$ Dually, denote by $\mathcal{SH}_{i,j}'$ the sum of all the terms in $\mathcal{SH}_{i,j}$ of which the leftmost element is not $E_{2}$. Then $\widehat{B}_{n,k}^{*}(E_{2},E_{1})=\mathcal{SH}_{n-k,k}'$.
\end{theorem}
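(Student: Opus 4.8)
The plan is to prove both statements by induction on $n$, by checking that the truncated shuffle polynomials $\mathcal{SH}_{k,n-k}^{[2]}$ and $\mathcal{SH}_{n-k,k}'$ satisfy exactly the defining recursions of the partial Bell differential polynomials $\widehat{B}_{n,k}$ and $\widehat{B}_{n,k}^{*}$ coming from Formula (\ref{formula 29}) and its dual in Definition \ref{def:Bell-diff-poly}. The organizing observation is that, since $1$ is the minimal and $2$ the maximal Lyndon word for $\prec$, in any PBW monomial $E_{\alpha_{1}}^{t_{1}}\cdots E_{\alpha_{n}}^{t_{n}}$ with $\alpha_{1}\succ\cdots\succ\alpha_{n}$ the factor $E_{1}$, if present, is forced to be rightmost and $E_{2}$, if present, is forced to be leftmost. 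Hence ``the rightmost element is not $E_{1}$'' means precisely ``$E_{1}$ does not occur'', and dually for $E_{2}$ on the left; this makes the truncations $(-)^{[2]}$ and $(-)'$ well behaved under the basic manipulations below.

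For part (a) the identity I would establish is the recursion
\[
\mathcal{SH}_{i,j}^{[2]}=E_{2}\,\mathcal{SH}_{i-1,j}^{[2]}+\operatorname{ad}_{L}E_{1}\bigl(\mathcal{SH}_{i,j-1}^{[2]}\bigr),
\]
which, on setting $i=k$ and $j=n-k$, is literally the recursion (\ref{formula 29}) for $\widehat{B}_{n,k}(E_{1},E_{2})$. I start from the left recursion (\ref{formula:recursion1-shuffle-poly}), $\mathcal{SH}_{i,j}=E_{2}\mathcal{SH}_{i-1,j}+E_{1}\mathcal{SH}_{i,j-1}$, and apply $(-)^{[2]}$ termwise. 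Prepending $E_{2}$ keeps the PBW order and does not change the rightmost factor, so $(E_{2}\mathcal{SH}_{i-1,j})^{[2]}=E_{2}\,\mathcal{SH}_{i-1,j}^{[2]}$. The substantive term is $E_{1}\mathcal{SH}_{i,j-1}$, which I analyze by splitting $\mathcal{SH}_{i,j-1}=\mathcal{SH}_{i,j-1}^{[2]}+\mathcal{SH}_{i,j-1}^{[1]}$ (the second summand collecting the terms that already end in $E_{1}$) and commuting $E_{1}$ to the right by Corollary \ref{cor:commutator-LS-basis}(b).

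The heart of the argument, and what I expect to be the main obstacle to state cleanly, is the bookkeeping of this commutation. On a monomial $E_{\beta_{1}}^{b_{1}}\cdots E_{\beta_{n}}^{b_{n}}$ of $\mathcal{SH}_{i,j-1}^{[2]}$ (all $\beta_{r}\succ 1$), Corollary \ref{cor:commutator-LS-basis}(b) produces a sum over $0\le c_{r}\le b_{r}$ whose rightmost factor is the Lyndon element $E_{1\beta_{1}^{c_{1}}\cdots\beta_{n}^{c_{n}}}$; this equals $E_{1}$ exactly when all $c_{r}=0$, the $c=0$ summand reassembling $\mathcal{SH}_{i,j-1}^{[2]}E_{1}$, while every $c\neq 0$ summand ends in a Lyndon word of length $\ge 2$ beginning with $1$ and hence lies in the $[2]$-part. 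Thus $\bigl(E_{1}\mathcal{SH}_{i,j-1}^{[2]}\bigr)^{[2]}=E_{1}\mathcal{SH}_{i,j-1}^{[2]}-\mathcal{SH}_{i,j-1}^{[2]}E_{1}=\operatorname{ad}_{L}E_{1}(\mathcal{SH}_{i,j-1}^{[2]})$. On a monomial of $\mathcal{SH}_{i,j-1}^{[1]}$, which already terminates in a block $E_{1}^{b}$ with $b\ge 1$, the same corollary leaves that trailing block in place, so every resulting term still ends in $E_{1}$ and contributes nothing, i.e. $\bigl(E_{1}\mathcal{SH}_{i,j-1}^{[1]}\bigr)^{[2]}=0$. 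Adding the two contributions yields the displayed recursion. A base-case check ($\mathcal{SH}_{1,0}^{[2]}=E_{2}=\widehat{B}_{1,1}$, and $\mathcal{SH}_{0,n}^{[2]}=0=\widehat{B}_{n,0}$ for $n\ge 1$, together with the conventions $\widehat{B}_{n,k}=0=\mathcal{SH}_{k,n-k}^{[2]}$ for $k>n$) then completes the induction.

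Part (b) I would obtain by the mirror-image argument. Working from the right recursion (\ref{formula:recursion2-shuffle-poly}), $\mathcal{SH}_{i,j}=\mathcal{SH}_{i-1,j}E_{2}+\mathcal{SH}_{i,j-1}E_{1}$, I apply $(-)'$: appending $E_{1}$ on the right does not disturb the leftmost factor, so $(\mathcal{SH}_{i,j-1}E_{1})'=\mathcal{SH}_{i,j-1}'E_{1}$, and I commute the appended $E_{2}$ to the left via Corollary \ref{cor:commutator-LS-basis}(d). On the leftmost-$\ne E_{2}$ part this produces $\operatorname{ad}_{R}E_{2}(\mathcal{SH}_{i-1,j}')$: the ``straight-through'' term begins with $E_{2}$ and so leaves the $'$-part, whereas each correction term carries a Lyndon factor $E_{\gamma}$ with $|\gamma|\ge 2$ (hence $\gamma\neq 2$) either in leading position or preceded by some $E_{\alpha_{1}}\neq E_{2}$; the leftmost-$E_{2}$ part contributes nothing. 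Setting $i=n-k$, $j=k$ gives $\mathcal{SH}_{n-k,k}'=\operatorname{ad}_{R}E_{2}(\mathcal{SH}_{n-k-1,k}')+\mathcal{SH}_{n-k,k-1}'E_{1}$, which is precisely the recursion for $\widehat{B}_{n,k}^{*}(E_{2},E_{1})$, and the analogous base-case check finishes the proof.
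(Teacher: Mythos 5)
Your proof is correct and takes essentially the same route as the paper's: both derive the recursion $\mathcal{SH}_{i,j}^{[2]}=E_{2}\mathcal{SH}_{i-1,j}^{[2]}+\left[E_{1},\mathcal{SH}_{i,j-1}^{[2]}\right]$ (and its dual for $\mathcal{SH}_{i,j}'$) from the shuffle recursions (\ref{formula:recursion1-shuffle-poly})--(\ref{formula:recursion2-shuffle-poly}) together with Corollary \ref{cor:commutator-LS-basis}, identify it with the Bell recursion (\ref{formula 29}) and its dual, and close the induction with the same initial conditions. Your explicit splitting $\mathcal{SH}_{i,j}=\mathcal{SH}_{i,j}^{[1]}+\mathcal{SH}_{i,j}^{[2]}$ and the observation that a PBW monomial has rightmost factor $E_{1}$ exactly when $E_{1}$ occurs at all (since $1$ is the $\prec$-minimal Lyndon word) is precisely the bookkeeping the paper carries out, only spelled out in more detail.
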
 

\begin{proof}
Let $\mathcal{SH}_{i,j}^{[1]}:=\mathcal{SH}_{i,j}-\mathcal{SH}_{i,j}^{[2]}$. 
Let $f_{1}$ be a term of $\mathcal{SH}_{i,j}^{[2]}$. By Formula (\ref{formula:commutators-x-LSbasis}), $f_{1}$ must not be a term of $E_{2}\mathcal{SH}_{i-1,j}^{[1]}+E_{1}\mathcal{SH}_{i-1,j}^{[1]}$. It follows from Formulas (\ref{formula:recursion1-shuffle-poly}) that $f_{1}$ only appears in $E_{2}\mathcal{SH}_{i-1,j}^{[2]}+E_{1}\mathcal{SH}_{i,j-1}^{[2]}$, or equivalently, in $E_{2}\mathcal{SH}_{i-1,j}^{[2]}+\left[E_{1},\mathcal{SH}_{i,j-1}^{[2]}\right]$ by Formula (\ref{formula:commutators-x-LSbasis}). Conversely, using Formula (\ref{formula:commutators-x-LSbasis}) we see that the leftmost element of a term $f_{2}$ in $E_{2}\mathcal{SH}_{i-1,j}^{[2]}+\left[E_{1},\mathcal{SH}_{i,j-1}^{[2]}\right]$ is not $E_{1}$. Thus, $f_{2}$ must appear in $\mathcal{SH}_{i,j}^{[2]}$ from Formula (\ref{formula:recursion1-shuffle-poly}). So we obtain the following recursion 
\begin{align*}
\mathcal{SH}_{i,j}^{[2]}=E_{2}\mathcal{SH}_{i-1,j}^{[2]}+\left[E_{1},\mathcal{SH}_{i,j-1}^{[2]}\right], \qquad i,j\ge 0.
\end{align*}
which is equivalent to the recursion of $\widehat{B}_{i+j,i}(E_{1},E_{2})$ (see Formula (\ref{formula 29})). Therefore, $\widehat{B}_{n,k}(E_{1},E_{2})=\mathcal{SH}_{k,n-k}^{[2]}$ from the initial conditions that $\widehat{B}_{0,0}(E_{1},E_{2})=1=\mathcal{SH}_{0,0}^{[2]}$ and $\widehat{B}_{1,1}(E_{1},E_{2})=E_{2}=\mathcal{SH}_{1,0}^{[2]}$.

Similar to the proof of Part (a), using Formula  (\ref{formula:recursion2-shuffle-poly}) and Corollary \ref{cor:commutator-LS-basis} (d), we obtain that $\mathcal{SH}_{i,j}'$ has the following recursion:
\begin{align}
\mathcal{SH}_{i,j}'=\left[\mathcal{SH}_{i-1,j}',E_{2}\right]+\mathcal{SH}_{i,j-1}'E_{1}.
\end{align}
Therefore, $\widehat{B}_{n.k}^{*}(E_{2},E_{1})=\mathcal{SH}_{n-k,k}'$.
\end{proof}

Now we use Theorem \ref{thm:noncom-binom-coeff-LSbasis} to describe $\widehat{B}_{n,k}(E_{1},E_{2})$ and $\widehat{B}_{n}(E_{1},E_{2})$ in terms of the Lyndon-Shirshov basis, in a generalized form of the   Bell polynomials (\ref{classical Bell polynomials}).

\begin{corollary}\label{cor:Bell-diff-poly-by-LS-basis}
The following hold in $\K\langle E_{1},E_{2}\rangle$:
\begin{align}\label{formula 37}
&\widehat{B}_{n,k}(E_{1},E_{2}) 
=\sum_{t_{2}+\sum_{i=1}^{r}t_{\alpha_{i}}\alpha_{i}(2)=k \atop \sum_{i=1}^{r}t_{\alpha_{i}}\alpha_{i}(1)=n-k}^{}\frac{k!}{t_{2}!t_{\alpha_{1}}!\cdots t_{\alpha_{r}}!}E_{2}^{t_{2}}\left(\frac{C_{E_{\alpha_{1}}}E_{\alpha_{1}}}{|\alpha_{1}|!}\right)^{t_{\alpha_{1}}} \ldots \left(\frac{C_{E_{\alpha_{r}}}E_{\alpha_{r}}}{|\alpha_{r}|!}\right)^{t_{\alpha_{r}}},\\
\noindent \label{formula 38}
&\widehat{B}_{n}(E_{1},E_{2}) 
=\sum_{t_{2}+\sum_{i=1}^{r}t_{\alpha_{i}}|\alpha_{i}|=n}^{}\frac{k!}{t_{2}!t_{\alpha_{1}}!\cdots t_{\alpha_{r}}!}E_{2}^{t_{2}}\left(\frac{C_{E_{\alpha_{1}}}E_{\alpha_{1}}}{|\alpha_{1}|!}\right)^{t_{\alpha_{1}}} \ldots \left(\frac{C_{E_{\alpha_{r}}}E_{\alpha_{r}}}{|\alpha_{r}|!}\right)^{t_{\alpha_{r}}},
\end{align}
where $2\succ \alpha_{1}\succ \alpha_{2}\succ \ldots \succ \alpha_{r} \succ1$.
\end{corollary}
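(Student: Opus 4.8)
The plan is to obtain both identities by combining Theorem~\ref{bell-shuffle} with the normalized Lyndon--Shirshov expansion of the shuffle type polynomials in Corollary~\ref{cor:noncomm-binom-thm-LSbasis}. Concretely, Theorem~\ref{bell-shuffle}(a) gives $\widehat{B}_{n,k}(E_{1},E_{2})=\mathcal{SH}_{k,n-k}^{[2]}$, so I only need to describe $\mathcal{SH}_{k,n-k}^{[2]}$ as a sub-sum of Formula~(\ref{j-z formula 2}) applied to $\mathcal{SH}_{k,n-k}$. Formula~(\ref{j-z formula 2}) already writes $\mathcal{SH}_{k,n-k}$ as a linear combination of the PBW monomials $E_{\alpha_{1}}^{t_{\alpha_{1}}}\cdots E_{\alpha_{m}}^{t_{\alpha_{m}}}$ of (\ref{PBW-basis-KX}) with explicit coefficients, so the task is purely to select the correct monomials and relabel their indices.

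The key observation I would use is that $1$ is the $\prec$-minimal Lyndon word. Hence, in any PBW monomial $E_{\alpha_{1}}^{t_{\alpha_{1}}}\cdots E_{\alpha_{m}}^{t_{\alpha_{m}}}$ with $\alpha_{1}\succ\cdots\succ\alpha_{m}$, the factor $E_{1}$ can only occur as the rightmost one, i.e. $\alpha_{m}=1$. Thus the defining condition of $\mathcal{SH}_{k,n-k}^{[2]}$, that the rightmost element is not $E_{1}$, is equivalent to the condition that no $E_{1}$ occurs at all, i.e. $\alpha_{i}\succ 1$ for every $i$. Since the monomials (\ref{PBW-basis-KX}) are linearly independent, forming $\mathcal{SH}_{k,n-k}^{[2]}$ amounts to deleting from (\ref{j-z formula 2}) every monomial containing a positive power of $E_{1}$, while leaving the coefficient of each surviving monomial untouched.

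Next I would separate the maximal Lyndon factor $2$: because $C_{E_{2}}=1$ and $|2|!=1$, its contribution in (\ref{j-z formula 2}) is exactly the prefactor $E_{2}^{t_{2}}$ appearing in (\ref{formula 37}), and the remaining Lyndon factors run over $2\succ\alpha_{1}\succ\cdots\succ\alpha_{r}\succ 1$. Setting $t_{1}=0$ in the two degree constraints of (\ref{j-z formula 2}) (the ones recording the $E_{2}$- and $E_{1}$-contents) specializes them to $t_{2}+\sum_{i}t_{\alpha_{i}}\alpha_{i}(2)=k$ and $\sum_{i}t_{\alpha_{i}}\alpha_{i}(1)=n-k$, which is exactly the index set under the sum in (\ref{formula 37}), the coefficient being the one that (\ref{j-z formula 2}) attaches to that monomial, of total degree $i+j=n$. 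This yields (\ref{formula 37}). For (\ref{formula 38}) I would then sum $\widehat{B}_{n}=\sum_{k=0}^{n}\widehat{B}_{n,k}$ over $k$: since $|\alpha|=\alpha(1)+\alpha(2)$ and $|2|=1$, the two constraints combine into the single length constraint $t_{2}+\sum_{i}t_{\alpha_{i}}|\alpha_{i}|=n$, giving the stated sum.

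The substantive work has already been carried out in Theorem~\ref{thm:noncom-binom-coeff-LSbasis} and Theorem~\ref{bell-shuffle}; the present corollary is essentially a change of viewpoint and index bookkeeping. The one step I expect to require genuine care is the equivalence between the positional description of $\mathcal{SH}^{[2]}$ (rightmost factor not $E_{1}$) and the basis-level operation of discarding every monomial divisible by $E_{1}$ --- this is precisely what the minimality of the letter $1$ delivers --- together with the verification that deleting those monomials does not renormalize the coefficients of the surviving ones, so that the constants of (\ref{j-z formula 2}) transfer unchanged.
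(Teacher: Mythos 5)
Your proposal is correct and follows essentially the same route as the paper, whose proof simply cites Theorem~\ref{bell-shuffle}(a) together with Formula~(\ref{j-z formula 2}) and the fact that $C_{E_{1}}=C_{E_{2}}=1$. Your explicit observation that $1$ is the $\prec$-minimal Lyndon word, so that ``rightmost factor is not $E_{1}$'' is equivalent to ``$t_{1}=0$'' in the PBW monomial, is exactly the bookkeeping step the paper leaves implicit.
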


\begin{proof}
Note that $C_{E_{1}}=C_{E_{2}}=1$. The claim follows from Theorem \ref{bell-shuffle} (a) and Formula (\ref{j-z formula 2}).
\end{proof}

\begin{remark}\label{Remark 4.8}
Write 
$$\omega_{n}:=\underbrace{1\cdots1}_{n}2,\qquad n\ge 0.$$ Let
$\mathcal{I}$ be the ideal of $\mathcal{T}=\K\langle E_{1},E_{2}\rangle$ generated by $\{E_{\alpha}|~\alpha(2)\ge 2\}$. 
Note that $C_{E_{\omega_{n}}}=1$ for $n\ge 1$ by Corollary \ref{cor:particular-case-noncom-binom} (a). Thus  Corollary \ref{cor:Bell-diff-poly-by-LS-basis} derives the Bell polynomials (\ref{classical Bell polynomials}) in the quotient $\mathcal{T}/\mathcal{I}$ of $\mathcal{T}$ . 
\end{remark} 

In the multinomial case, Theorem \ref{bell-shuffle} can be generalized as follows.
\begin{corollary}
Let $(X,\preceq)=(\{1,\ldots,m\},1\prec \ldots \prec m)$. In  $\K\langle E_{1},\ldots, E_{m}\rangle$, $\widehat{B}_{n,k}(E_{1},E_{2}+\ldots+E_{m})$ is equal to the sum of the terms in
$\sum_{i_{m}+\ldots +i_{2}=k}^{} \mathcal{SH}_{i_{m}, \ldots, i_{2},n-k}(E_{m},\ldots,E_{2},E_{1})$ of which the rightmost element is not $E_{1}$.
\end{corollary}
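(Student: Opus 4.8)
The plan is to mirror the proof of Theorem~\ref{bell-shuffle}(a), replacing the single letter $E_2$ by the sum $Y:=E_2+\cdots+E_m$ and using the multinomial recursion~(\ref{formula:recursion-SH_im}) in place of~(\ref{formula:recursion1-shuffle-poly}). Write $\mathcal{SH}_{i_m,\ldots,i_1}^{[2]}$ for the sum of those terms of $\mathcal{SH}_{i_m,\ldots,i_1}$ (expanded in the Lyndon--Shirshov basis) whose rightmost factor is not $E_1$, and set
\begin{align*}
S_{n,k}:=\sum_{i_m+\cdots+i_2=k}\mathcal{SH}_{i_m,\ldots,i_2,n-k}^{[2]}.
\end{align*}
The goal is to show $S_{n,k}=\widehat{B}_{n,k}(E_1,Y)$, and to do this I would establish that $S_{n,k}$ obeys the same recursion and initial data as $\widehat{B}_{n,k}(E_1,Y)$.

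First I would analyze how left multiplication acts on the rightmost factor after re-expanding into the PBW basis via~(\ref{formula:commutators-x-LSbasis}). The decisive feature is that $E_1=1$ is the minimal Lyndon word, so that whenever a monomial $M=E_{\beta_1}^{b_1}\cdots E_{\beta_n}^{b_n}$ ends in $E_1$, the letter $1$ lands in the right-hand block of~(\ref{formula:commutators-x-LSbasis}); consequently, for every $j\ge 2$, the product $E_jM$ leaves the rightmost factor of each resulting term unchanged, so each $E_j$ preserves both the class ``rightmost $=E_1$'' and the class ``rightmost $\ne E_1$''. For $j=1$ and $M$ with rightmost factor $\ne E_1$ (so $1\prec\beta_i$ for all $i$, forcing $k=n$ in~(\ref{formula:commutators-x-LSbasis})) one gets $E_1M=ME_1+[E_1,M]$, where the single term $ME_1$ has rightmost factor $E_1$, while every term of $[E_1,M]$ has rightmost factor of the form $E_{1\beta_1^{c_1}\cdots\beta_n^{c_n}}$ with $\sum c_i\ge 1$, a Lyndon word of length $\ge 2$, hence $\ne E_1$. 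Feeding this into~(\ref{formula:recursion-SH_im}) and extracting the rightmost-$\ne E_1$ part would yield
\begin{align*}
\mathcal{SH}_{i_m,\ldots,i_1}^{[2]}=\sum_{j=2}^{m}E_j\,\mathcal{SH}_{i_m,\ldots,i_j-1,\ldots,i_1}^{[2]}+\bigl[E_1,\mathcal{SH}_{i_m,\ldots,i_2,i_1-1}^{[2]}\bigr].
\end{align*}

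Next I would sum this identity over $i_m+\cdots+i_2=k$ with $i_1=n-k$. The shift $i_j\mapsto i_j-1$ ($j\ge 2$) turns the first block into $(E_2+\cdots+E_m)\,S_{n-1,k-1}=Y\,S_{n-1,k-1}$, using $(n-1)-(k-1)=n-k$, while the commutator block becomes $[E_1,S_{n-1,k}]$, using $(n-1)-k=n-k-1$. Thus
\begin{align*}
S_{n,k}=Y\,S_{n-1,k-1}+\operatorname{ad}_{L}E_1\bigl(S_{n-1,k}\bigr),
\end{align*}
which is precisely the recursion~(\ref{formula 29}) satisfied by the multinomial Bell differential polynomial $\widehat{B}_{n,k}(E_1,Y)$ coming from Remark~\ref{Remark 4.3}(b). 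Since $S_{0,0}=1=\widehat{B}_{0,0}(E_1,Y)$ and $S_{1,1}=E_2+\cdots+E_m=Y=\widehat{B}_{1,1}(E_1,Y)$, induction on $n$ finishes the proof.

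I expect the main obstacle to be the bookkeeping in the first step: justifying, purely from~(\ref{formula:commutators-x-LSbasis}) and the minimality of $E_1$, that left multiplication by each $E_j$ respects the rightmost-factor dichotomy and that $[E_1,\cdot]$ isolates exactly the rightmost-$\ne E_1$ part. This is the point where one must confirm that the single-letter argument of Theorem~\ref{bell-shuffle}(a) survives verbatim once several raising letters $E_2,\ldots,E_m$ are present; the crucial check is that a trailing $E_1$ is never displaced because $1\prec j$ for every $j\ge 2$. The subsequent reindexing of the double sum and the verification of the initial conditions are routine.
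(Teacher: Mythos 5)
Your proposal is correct and is exactly the argument the paper intends: the paper states this corollary without proof as the routine multinomial generalization of Theorem~\ref{bell-shuffle}(a), and your write-up carries out precisely that generalization (the recursion $\mathcal{SH}^{[2]}_{i_m,\ldots,i_1}=\sum_{j\ge 2}E_j\,\mathcal{SH}^{[2]}_{\ldots,i_j-1,\ldots}+[E_1,\mathcal{SH}^{[2]}_{\ldots,i_1-1}]$, justified by the minimality of the Lyndon word $1$ in Formula~(\ref{formula:commutators-x-LSbasis}), then matching the recursion~(\ref{formula 29}) for $\widehat{B}_{n,k}(E_1,E_2+\cdots+E_m)$ with the initial data). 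No gaps; the key check that a trailing $E_1$ is never displaced by left multiplication is handled correctly.
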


Formulas (\ref{formula 37}) and (\ref{formula 38}) can be generalized to $\widehat{B}_{n,k}(E_{1},E_{2}+\ldots+E_{m})$ and $\widehat{B}_{n}(E_{1},E_{2}+\ldots+E_{m})$. We use Corollary \ref{cor:noncom-multinomial-thm} to generalize Corollary \ref{cor:Bell-diff-poly-by-LS-basis} to the following.
\begin{corollary}\label{Corollary 4.10}
Let $(X,\preceq)=(\{1,\ldots,m\},1\prec \ldots \prec m)$. The following hold in $\K\langle E_{1},\ldots,E_{m}\rangle$:
\begin{align*}
\widehat{B}_{n,k}(E_{1},E_{2}+\ldots+E_{m})
=& \sum_{\sum_{i=1}^{r}\sum_{j=2}^{m}t_{\alpha_{i}}\alpha_{i}(j)=k \atop \sum_{i=1}^{r}t_{\alpha_{i}}\alpha_{i}(1)=n-k}^{}\frac{k!}{t_{\alpha_{1}}!\ldots t_{\alpha_{r}}!}\left(\frac{C_{E_{\alpha_{1}}}E_{\alpha_{1}}}{|\alpha_{1}|!}\right)^{t_{\alpha_{1}}} \ldots \left(\frac{C_{E_{\alpha_{r}}}E_{\alpha_{r}}}{|\alpha_{r}|!}\right)^{t_{\alpha_{r}}},\\
\widehat{B}_{n}(E_{1},E_{2}+\ldots+E_{m})
=& \sum_{\sum_{i=1}^{r}t_{\alpha_{i}}|\alpha_{i}|=n}^{}\frac{k!}{t_{\alpha_{1}}!\ldots t_{\alpha_{r}}!}\left(\frac{C_{E_{\alpha_{1}}}E_{\alpha_{1}}}{|\alpha_{1}|!}\right)^{t_{\alpha_{1}}} \ldots \left(\frac{C_{E_{\alpha_{r}}}E_{\alpha_{r}}}{|\alpha_{r}|!}\right)^{t_{\alpha_{r}}},
\end{align*}
where $m\succeq \alpha_{1}\succ \ldots \succ \alpha_{r} \succ 1$.
\end{corollary}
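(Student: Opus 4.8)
The plan is to deduce both identities exactly as Corollary~\ref{cor:Bell-diff-poly-by-LS-basis} was deduced from Theorem~\ref{bell-shuffle}(a) and the normalized expansion~(\ref{j-z formula 2}): namely, by combining the multinomial Bell--shuffle correspondence stated just above with the multinomial coefficient formula of Corollary~\ref{cor:noncom-multinomial-thm}. First I would invoke that preceding corollary, which identifies $\widehat{B}_{n,k}(E_{1},E_{2}+\ldots+E_{m})$ with the sum of those terms of $\sum_{i_{m}+\ldots+i_{2}=k}\mathcal{SH}_{i_{m},\ldots,i_{2},n-k}(E_{m},\ldots,E_{1})$ whose rightmost Lyndon--Shirshov factor is not $E_{1}$. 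Everything then reduces to expanding each inner shuffle polynomial in the Lyndon--Shirshov basis and extracting the surviving monomials.

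Next I would rewrite each $\mathcal{SH}_{i_{m},\ldots,i_{2},n-k}$ via the multinomial analogue of the normalized expansion~(\ref{j-z formula 2}), which is valid by the remark following Corollary~\ref{cor:noncom-multinomial-thm}. The coefficient that each basis monomial $E_{\alpha_{1}}^{t_{\alpha_{1}}}\cdots E_{\alpha_{r}}^{t_{\alpha_{r}}}$ receives is the one supplied by Corollary~\ref{cor:noncom-multinomial-thm}, governed by the total degree $\sum_{x\in X}i_{x}$; and this total degree equals $(i_{2}+\ldots+i_{m})+(n-k)=n$ for every term of the inner sum, hence it is constant along $i_{2}+\ldots+i_{m}=k$. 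This uniformity is what lets the double sum over multidegrees collapse: each Lyndon--Shirshov monomial has a single well-defined multidegree $i_{x}=\sum_{i}t_{\alpha_{i}}\alpha_{i}(x)$, so it arises from exactly one summand $\mathcal{SH}_{i_{m},\ldots,i_{2},n-k}$, and the outer sum over $(i_{2},\ldots,i_{m})$ merely reassembles all admissible monomials into a single sum subject to $\sum_{i}\sum_{j=2}^{m}t_{\alpha_{i}}\alpha_{i}(j)=k$ and $\sum_{i}t_{\alpha_{i}}\alpha_{i}(1)=n-k$.

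It remains to read off the selection ``rightmost factor not $E_{1}$''. Since $1$ is the minimum of $(X,\prec)$, the letter $E_{1}$ is the smallest Lyndon word, so in the PBW ordering~(\ref{PBW-basis-KX}) an $E_{1}$-factor can occur only in the rightmost position; its absence is therefore exactly the condition $\alpha_{r}\succ 1$, giving the index range $m\succeq\alpha_{1}\succ\ldots\succ\alpha_{r}\succ 1$ of the statement. This yields the formula for $\widehat{B}_{n,k}(E_{1},E_{2}+\ldots+E_{m})$. The formula for $\widehat{B}_{n}=\sum_{k}\widehat{B}_{n,k}$ then follows by summing over $k$, which merges the two degree constraints into the single condition $\sum_{i}t_{\alpha_{i}}|\alpha_{i}|=n$.

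The step I expect to be the main obstacle is justifying that this extraction of terms is consistent and non-overlapping across the different summands $\mathcal{SH}_{i_{m},\ldots,i_{2},n-k}$, i.e.\ that passing to the Lyndon--Shirshov basis neither merges monomials coming from distinct multidegrees nor produces cancellations that would spoil the coefficient read off from Corollary~\ref{cor:noncom-multinomial-thm}. This is ultimately guaranteed because~(\ref{PBW-basis-KX}) is a genuine basis and each basis monomial carries its multidegree unambiguously, but it is precisely the bookkeeping that must be verified with care, and it is where the multinomial case requires slightly more attention than the binomial Corollary~\ref{cor:Bell-diff-poly-by-LS-basis}.
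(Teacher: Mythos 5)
Your proposal is correct and is essentially the paper's own (very terse) argument: the paper also obtains this corollary by feeding the multinomial Bell--shuffle correspondence of the preceding corollary into the Lyndon--Shirshov coefficient formula of Corollary~\ref{cor:noncom-multinomial-thm}, exactly as you do, and your extra bookkeeping (constant total degree $n$ across the summands, multidegree-preservation in the PBW rewriting, and $\alpha_{r}\succ 1$ encoding ``rightmost factor not $E_{1}$'') is the correct justification of the steps the paper leaves implicit. One remark: carried out faithfully, your derivation produces the prefactor $\frac{n!}{t_{\alpha_{1}}!\cdots t_{\alpha_{r}}!}$ (each contributing shuffle polynomial has total degree $n$, which is what Corollary~\ref{cor:noncom-multinomial-thm} uses), not the printed $\frac{k!}{t_{\alpha_{1}}!\cdots t_{\alpha_{r}}!}$ --- this is a typo in the statement, inherited from Corollary~\ref{cor:Bell-diff-poly-by-LS-basis}, as one confirms against $\widehat{B}_{3,2}(E_{1},E_{2})=3E_{2}E_{12}+E_{122}$, where the factor $3$ requires $3!/(1!\,1!\,2!)$ rather than $2!/(1!\,1!\,2!)$.
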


\section{ Applications}

\subsection{Applications of Theorem \ref{thm:noncom-binom-coeff-LSbasis}}\label{Section 5.1}
\qquad 

In this subsection, we give applications of Theorem \ref{thm:noncom-binom-coeff-LSbasis}, which contain some classical binomial formulas. 
\begin{example}\label{Example 5.1}
(a) Let $(X,\prec)=(\{1,2\}, 1\prec 2)$. Assume $E_{12}=0$, that is, $E_{1}E_{2}=E_{2}E_{1}$.  The set of the generators of the Lyndon-Shirshov basis  in the quotient $\K\langle X\rangle/(E_{12})$ is $\{E_{1},E_{2}\}$. In Theorem \ref{thm:noncom-binom-coeff-LSbasis}, let $n=2,~\alpha_{1}=2,~\alpha_{2}=1,~t_{\alpha_{1}}=d-k$ and $t_{\alpha_{2}}=k$. Since $|\alpha_{1}|=|2|=1,~|\alpha_{2}|=|1|=1,~C_{E_{2}}=1$ and $C_{E_{1}}=1$, we have: 
\begin{align*}
C_{E_{2}^{d-k}E_{1}^{k}}=\frac{d!}{(d-k)!k!},
\end{align*} which is the binomial coefficient in the commutative case.\\
\qquad\\
(b) Let $(X,\prec)=(\{1,2,\ldots,m\},1\prec 2\prec \ldots \prec m)$. Assume that $E_{ij}=0$ for all $1\le i<j\le m$, that is, $E_{i}E_{j}=E_{j}E_{i}$. The set of the generators of the Lyndon-Shirshov basis is $\{E_{1},E_{2},\ldots,E_{m}\}$. Set $n=m$, $\alpha_{1}=m$, $\alpha_{2}=m-1$, \ldots, $\alpha_{m}=1$ in Corollary \ref{cor:noncom-multinomial-thm}. 
Note that $|\alpha_{\ell}|=1$ and $C_{E_{\alpha_{\ell}}}=1$ for $1\le \ell\le m$. Thus $$C_{E_{m}^{t_{m}}E_{m-1}^{t_{m-1}}\ldots E_{1}^{t_{1}}}=\frac{(t_{m}+t_{m-1}+\ldots +t_{1})!}{t_{m}!t_{m-1}!\ldots t_{1}!},$$ which is the well-known multinomial coefficient in the commutative case.
\end{example}

\begin{example}\label{eg.:quasi-commutative}
Let $(X,\prec)=(\{1,2\},1\prec 2)$. If $E_{112}=0$ and $E_{122}=0$, that is,
$$E_{1}E_{2}=E_{2}E_{1}+E_{12}, \qquad E_{1}E_{12}=E_{12}E_{1}, \qquad E_{12}E_{2}=E_{2}E_{12}.$$
In this case, $\{E_{2},E_{12},E_{1}\}$ generates the Lyndon-Shirshov basis. Let $n=3,~\alpha_{1}=2,~\alpha_{2}=12,~\alpha_{3}=1$ in Theorem \ref{thm:noncom-binom-coeff-LSbasis}. Observe that $|2|=1,~|12|=2,~|1|=1,~C_{E_{2}}=1,~C_{E_{12}}=1$ and $C_{E_{1}}=1$. Thus
\begin{align*}
C_{E_{2}^{t_{2}}E_{12}^{t_{12}}E_{1}^{t_{1}}}=\frac{(t_{2}+2t_{12}+t_{1})!}{t_{2}!(2!)^{t_{12}}t_{12}!t_{1}!}, 
\end{align*}
Therefore, for $d\ge 0$,
\begin{align}\label{formula 39}
(E_{1}+E_{2})^{d}=\sum_{t_{2}+2t_{12}+t_{1}=d} \frac{d!}{t_{2}!(2!)^{t_{12}}t_{12}!t_{1}!} E_{2}^{t_{2}}E_{12}^{t_{12}}E_{1}^{t_{1}}.
\end{align}
\end{example}

\begin{remark}\label{rmk:quasi-comm-case}
Replace $E_{1}$ by $x$, $E_{2}$ by $y$ and $E_{12}$ by $h$ in Formula (\ref{formula 39}). Then (\ref{formula 39}) has the equivalent form as the binomial formula (\ref{formula-1}). Note that the binomial formula (\ref{formula:Blumen}) is a $q$-analogue of Formula (\ref{formula 39}). We generalize (\ref{formula:Blumen}) in Section \ref{Section 5.2}, using the $q$-commutative Bell differential polynomials.
\end{remark}

Moreover, we can consider the cases that retain more Lyndon-Shirshov basis elements, not only the commutative case as Example \ref{Example 5.1} and the quasi-commutative case as Example \ref{eg.:quasi-commutative}.
\begin{example}
Let $(X,\prec)=(\{1,2\},1\prec 2)$. Let $E_{1112}=E_{1122}=E_{1222}=E_{11212}=E_{12122}=0$, that is,
\begin{align*}
E_{1}E_{2} &=E_{2}E_{1}+E_{12}, \quad \quad  E_{1}E_{12} =E_{12}E_{1}+E_{112}, \quad\quad E_{12}E_{2}=E_{2}E_{12}+E_{122},\\
E_{1}E_{112} &=E_{112}E_{1}, \quad \quad \quad \quad  E_{1}E_{122}=E_{122}E_{1}, \quad\quad\quad\quad E_{112}E_{2}=E_{2}E_{112},\\
E_{122}E_{2} &=E_{2}E_{122}, \quad \quad \quad \quad E_{112}E_{12} =E_{12}E_{112},  \quad \quad \quad E_{12}E_{122}=E_{122}E_{12},\\
 E_{112}E_{122} &=E_{122}E_{112}.
\end{align*}  
In this case, the generator set of Lyndon-Shirshov basis is $\{E_{2},E_{122},E_{12},E_{112},E_{1}\}$.
Let $n=5,~\alpha_{1}=2,~\alpha_{2}=122,~\alpha_{3}=12,~\alpha_{4}=112$ and $\alpha_{5}=1$ in Theorem \ref{thm:noncom-binom-coeff-LSbasis}. Then $C_{E_{2}}=1,~C_{E_{122}}=1,~C_{E_{12}}=1,~C_{E_{112}}=1$ and $C_{E_{1}}=1$. Thus we obtain
\begin{align*}
C_{E_{2}^{t_{2}}E_{122}^{t_{122}}E_{12}^{t_{12}}E_{112}^{t_{112}}E_{1}^{t_{1}}}=\frac{(t_{2}+3t_{122}+2t_{12}+3t_{112}+t_{1})!}{t_{2}!(3!)^{t_{122}}t_{122}!(2!)^{t_{12}}t_{12}!(3!)^{t_{112}}t_{112}!t_{1}!}.
\end{align*}
Therefore, for $d\ge 0$,
\begin{align*}
(E_{1}+E_{2})^{d}=\sum_{t_{2}+3t_{122}+2t_{12}+3t_{112}+t_{1}=d} \frac{d!}{t_{2}!(3!)^{t_{122}}t_{122}!(2!)^{t_{12}}t_{12}!(3!)^{t_{112}}t_{112}!t_{1}!}  E_{2}^{t_{2}}E_{122}^{t_{122}}E_{12}^{t_{12}}E_{112}^{t_{112}}E_{1}^{t_{1}}.\\
\end{align*}

\end{example}

\begin{example}
Let $(X,\prec)=(\{1,2,3\},1\prec 2\prec 3)$. Assume $E_{112}=E_{113}=E_{122}=E_{123}=E_{132}=E_{133}=E_{223}=E_{233}=0$. Then $[E_{12},E_{3}]=E_{123}+E_{132}=0$. Thus we have the relations:
\begin{align*}
E_{1}E_{2} &=E_{2}E_{1}+E_{12}, \quad \quad E_{1}E_{3}=E_{3}E_{1}+E_{13}, \qquad E_{2}E_{3}=E_{3}E_{2}+E_{23},\\
E_{1}E_{12} &=E_{12}E_{1}, \quad \quad \quad \quad E_{1}E_{13}=E_{13}E_{1}, \quad\quad\quad\quad E_{1}E_{23}=E_{23}E_{1},\\
E_{12}E_{2} &=E_{2}E_{12}, \quad\quad\quad\quad E_{12}E_{3}=E_{3}E_{12}, \quad\quad\quad\quad E_{13}E_{2}=E_{2}E_{13}\\
E_{13}E_{3} &=E_{3}E_{13}, \quad\quad\quad\quad E_{2}E_{23}=E_{23}E_{2}, \quad\quad\quad\quad E_{23}E_{3}=E_{3}E_{23},\\
E_{12}E_{13} &=E_{13}E_{12}, ~\quad\quad\quad E_{12}E_{23}=E_{23}E_{12}, \quad\quad\quad E_{13}E_{23}=E_{23}E_{13}.
\end{align*}
The generator set of Lyndon-Shirshov basis is $\{E_{3},E_{23},E_{2},E_{13},E_{12},E_{1}\}$.
Let $n=6,~\alpha_{1}=3,~\alpha_{2}=23,~\alpha_{3}=2,~\alpha_{4}=13,~\alpha_{5}=12,~\alpha_{6}=1$ in Corollary \ref{cor:noncom-multinomial-thm}. Then $C_{E_{3}}=C_{E_{23}}=C_{E_{2}}=C_{E_{13}}=C_{E_{12}}=C_{E_{1}}=1$. Thus we obtain 
\begin{align*}
C_{E_{3}^{t_{3}}E_{23}^{t_{23}}E_{2}^{t_{2}}E_{13}^{t_{13}}E_{12}^{t_{12}}E_{1}^{t_{1}}}=\frac{(t_{3}+2t_{23}+t_{2}+2t_{13}+2t_{12}+t_{1})!}{t_{3}!(2!)^{t_{23}}t_{23}!t_{2}!(2!)^{t_{13}}t_{13}!(2!)^{t_{12}}t_{12}!t_{1}!}.
\end{align*}
 Therefore, for $d\ge 0$,
\begin{align*}
(E_{1}+E_{2}+E_{3})^{d}=\sum_{t_{3}+2t_{23}+t_{2}+2t_{13}+2t_{12}+t_{1}=d} \frac{d!}{t_{3}!(2!)^{t_{23}}t_{23}!t_{2}!(2!)^{t_{13}}t_{13}!(2!)^{t_{12}}t_{12}!t_{1}!} E_{3}^{t_{3}}E_{23}^{t_{23}}E_{2}^{t_{2}}E_{13}^{t_{13}}E_{12}^{t_{12}}E_{1}^{t_{1}}.
\end{align*}
\end{example}

\subsection{Applications of $q$-Bell differential polynomials}\label{Section 5.2} 
\qquad

In this subsection, we deduce a $q$-commutative form from the $q$-Bell differential polynomial $\widehat{B}_{n,k,q}$. For example, if $y^{(1)}y=q^{2}yy^{(1)}$ and $y^{(2)}y=q^{3}yy^{(2)}$, then 
\begin{align*}
\widehat{B}_{3,2,q} &=(3)_{q}yy^{(1)},\\
\widehat{B}_{4,2,q} &=(4)_{q}yy^{(2)},\\
\widehat{B}_{4,3,q} &=\binom{4}{2}_{q}y^{2}y^{(1)}.
\end{align*}
In general, if we assume $y^{(v-1)}y^{(u-1)}=q^{v}y^{(u-1)}y^{(v-1)}$ for all $1\le u<v$, then we have the following formulas.

\begin{theorem}\label{thm:q-comm-Bell-diff-poly}
Let $d_{i}:=y^{(i-1)}$ for $i\ge 1$. Assume $d_{v}d_{u}=q^{v}d_{u}d_{v}$ for all $1\le u<v$. Then the following hold
\begin{align}
\label{formula 42}
\widehat{B}_{n,k,q}&=\sum_{t_{1}+t_{2}+\ldots+t_{n-k+1}=k \atop t_{1}+2t_{2}+\ldots+(n-k+1)t_{n-k+1}=n}^{}\frac{(n)_{q}!}{\prod_{i=1}^{n-k+1}((i)_{q}!)^{t_{i}}(t_{i})_{q^{i}}!}d_{1}^{t_{1}}d_{2}^{t_{2}}\cdots d_{n-k+1}^{t_{n-k+1}},\\
\label{formula 43}
\widehat{B}_{n,q}&=\sum_{t_{1}+2t_{2}+\ldots+nt_{n}=n}^{}\frac{(n)_{q}!}{\prod_{i=1}^{n}((i)_{q}!)^{t_{i}}(t_{i})_{q^{i}}!}d_{1}^{t_{1}}d_{2}^{t_{2}}\cdots d_{n}^{t_{n}},
\end{align}
\end{theorem}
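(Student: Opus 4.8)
\textbf{Strategy and reduction of \eqref{formula 43} to \eqref{formula 42}.}
The plan is to prove the closed form \eqref{formula 42} for the partial polynomials $\widehat{B}_{n,k,q}$ by induction on $n$, after which \eqref{formula 43} comes for free. Once \eqref{formula 42} is known, summing over $k=1,\dots,n$ and using $\widehat{B}_{n,q}=\sum_{k=1}^{n}\widehat{B}_{n,k,q}$ collapses the two constraints into the single condition $\sum_i i t_i=n$: each solution of $\sum_i i t_i=n$ determines $k=\sum_i t_i$, and its largest occurring index is automatically at most $n-k+1$ (the remaining $k-1$ letters contribute degree at least $k-1$), so the summands of \eqref{formula 42} and \eqref{formula 43} coincide. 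Throughout I regard the monomials modulo the normal-ordering relations $d_vd_u=q^vd_ud_v$ for $u<v$, which rewrite every product of the $d_i$ uniquely in increasing form $d_1^{t_1}d_2^{t_2}\cdots$; this is what makes the coefficients in \eqref{formula 42} well defined.

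\textbf{Mechanism of the induction.}
For the inductive step I use the recursion \eqref{q-Bell differential recursion}, which expresses $\widehat{B}_{n,k,q}$ as a Gaussian-binomial combination of the lower polynomials $\widehat{B}_{\ell,k-1,q}$, each right-multiplied by a single generator $y^{(m)}=d_{m+1}$. Fix a target monomial $d_1^{t_1}\cdots d_M^{t_M}$ with $\sum_i t_i=k$ and $\sum_i i t_i=n$. For an index $p$ with $t_p\ge 1$, appending the generator $d_p$ to the monomial $d_1^{t_1}\cdots d_p^{t_p-1}\cdots d_M^{t_M}$ and sliding $d_p$ left into its block requires, for each larger factor, one commutation $d_j d_p=q^{j}d_p d_j$; hence the reordering contributes exactly the power $q^{\sum_{j>p}j t_j}$, and the predecessor $d_1^{t_1}\cdots d_p^{t_p-1}\cdots d_M^{t_M}$ (which has $k-1$ letters and degree $n-p$) is the unique monomial of $\widehat{B}_{n-p,k-1,q}$ feeding this term. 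A short degree count shows these predecessors are always legitimate terms of $\widehat{B}_{n-p,k-1,q}$, and that every $p$ with $t_p\ge 1$ occurs exactly once.

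\textbf{Reduction to a scalar $q$-identity.}
Substituting the inductive expression for the coefficient of $d_1^{t_1}\cdots d_p^{t_p-1}\cdots$ in $\widehat{B}_{n-p,k-1,q}$, writing $\binom{n-1}{n-p}_q=\dfrac{(n-1)_q!}{(n-p)_q!\,(p-1)_q!}$, and cancelling the matching $q$-factorials, the coefficient of the target monomial in $\widehat{B}_{n,k,q}$ becomes
\[
\frac{(n-1)_q!}{\prod_i\big((i)_q!\big)^{t_i}(t_i)_{q^i}!}\;\sum_{p:\,t_p\ge 1} q^{\sum_{j>p} j t_j}\,(p)_q\,(t_p)_{q^p}.
\]
Since $(n)_q!=(n)_q\,(n-1)_q!$ and $(p)_q(t_p)_{q^p}=(p t_p)_q$, the formula \eqref{formula 42} is therefore equivalent to the single scalar identity $\sum_{p}q^{\sum_{j>p} j t_j}(p t_p)_q=(n)_q$, where $n=\sum_p p t_p$.

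\textbf{Main obstacle and closing.}
This last identity is precisely the iterated $q$-addition formula $(a+b)_q=(a)_q+q^{a}(b)_q$ applied to the parts $a_p:=p t_p$ (summed from the largest index downward), so it is elementary. Consequently the real work is not a deep identity but the bookkeeping of the previous two steps: verifying that the normal-ordering picks up exactly $q^{\sum_{j>p}j t_j}$, that the Gaussian binomial and the inductive $q$-factorials cancel to leave precisely $(p)_q(t_p)_{q^p}$, and that no spurious predecessors appear. I expect this coefficient matching to be the main obstacle. The base case $n=1$ (and the degenerate contributions with $t_p=0$, which drop out) is immediate, and the term $p=1$ --- appending $d_1$ and sliding it all the way to the left --- is handled within the same scheme. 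With \eqref{formula 42} established, \eqref{formula 43} follows by the summation over $k$ described above.
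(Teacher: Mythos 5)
Your proposal is correct and follows essentially the same route as the paper's proof: induction on $n$ via the recursion (\ref{q-Bell differential recursion}), tracking the commutation factor $q^{\sum_{j>p}jt_j}$ when the appended letter $d_p$ is slid into position, cancelling the Gaussian binomial against the inductive $q$-factorials to leave $(p)_q(t_p)_{q^p}=(pt_p)_q$, and closing with the iterated $q$-addition identity $(a+b)_q=(a)_q+q^a(b)_q$. The only differences are cosmetic (you index the recursion at $n$ rather than $n+1$, and you spell out the summation over $k$ yielding \eqref{formula 43}, which the paper leaves implicit).
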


\begin{proof}
Denote by $C_{d_{i_{1}}^{t_{i_{1}}}d_{i_{2}}^{t_{i_{2}}}\cdots d_{i_{m}}^{t_{i_{m}}}}$ the coefficient of the term $d_{i_{1}}^{t_{i_{1}}}d_{i_{2}}^{t_{i_{2}}}\cdots d_{i_{m}}^{t_{i_{m}}}$ in $\widehat{B}_{n+1,k+1,q}$, where $1\le i_{1}< i_{2}<\ldots<i_{m}\le n-k+1$ and $t_{i_{1}},\ldots,t_{i_{m}}\ge 1$. It follows from Formula (\ref{q-Bell differential recursion}) that the term $d_{i_{1}}^{t_{i_{1}}}d_{i_{2}}^{t_{i_{2}}}\cdots d_{i_{m}}^{t_{i_{m}}}$ occurs in the polynomials $(1\le j\le m)$:
\begin{align*}
\binom{n}{n-i_{j}+1}_{q}\left(C_{d_{i_{1}}^{t_{i_{1}}}d_{i_{2}}^{t_{i_{2}}}\cdots d_{i_{(j-1)}}^{t_{i_{(j-1)}}}d_{i_{j}}^{t_{i_{j}}-1}d_{i_{(j+1)}}^{t_{i_{(j+1)}}}\cdots d_{i_{m}}^{t_{i_{m}}}}
d_{i_{1}}^{t_{i_{1}}}d_{i_{2}}^{t_{i_{2}}}\cdots d_{i_{(j-1)}}^{t_{i_{(j-1)}}}d_{i_{j}}^{t_{i_{j}}-1}d_{i_{(j+1)}}^{t_{i_{(j+1)}}}\cdots d_{i_{m}}^{t_{i_{m}}}\right)d_{i_{j}},
\end{align*}
where $d_{i_{1}}^{t_{i_{1}}}d_{i_{2}}^{t_{i_{2}}}\cdots d_{i_{(j-1)}}^{t_{i_{(j-1)}}}d_{i_{j}}^{t_{i_{j}}-1}d_{i_{(j+1)}}^{t_{i_{(j+1)}}}\cdots d_{i_{m}}^{t_{i_{m}}}$ is a term of $\widehat{B}_{n-i_{j}+1,k,q}$. Note that $d_{v}d_{u}=q^{v}d_{u}d_{v}$, $(rs)_{q}=(r)_{q}(s)_{q^{r}}$ and $(r+s)_{q}=(r)_{q}+q^{r}(s)_{q}$. By induction on $n$, we have the following equations:
\begin{align*}
C_{d_{i_{1}}^{t_{i_{1}}}d_{i_{2}}^{t_{i_{2}}}\cdots d_{i_{m}}^{t_{i_{m}}}} &=\sum_{j=1}^{m}\binom{n}{n-i_{j}+1}_{q}\frac{(n-i_{j}+1)_{q}!(i_{j})_{q}!(t_{i_{j}})_{q^{i_{j}}}}{\prod_{r=1}^{m}((i_{r})_{q}!)^{t_{i_{r}}}(t_{i_{r}})_{q^{i_{r}}}!}q^{(i_{m}t_{i_{m}}+i_{m-1}t_{i_{m-1}}+\ldots+i_{(j+1)}t_{i_{(j+1)}})}\\
&=\frac{(n)_{q}!\sum_{j=1}^{m}(i_{j})_{q}(t_{i_{j}})_{q^{i_{j}}}q^{(i_{m}t_{i_{m}}+i_{m-1}t_{i_{m-1}}+\ldots+i_{(j+1)}t_{i_{(j+1)}})}}{\prod_{r=1}^{m}((i_{r})_{q}!)^{t_{i_{r}}}(t_{i_{r}})_{q^{i_{r}}}!}\\
&=\frac{(n)_{q}!\sum_{j=1}^{m}(i_{j}t_{i_{j}})_{q}q^{(i_{m}t_{i_{m}}+i_{m-1}t_{i_{m-1}}+\ldots+i_{(j+1)}t_{i_{(j+1)}})}}{\prod_{r=1}^{m}((i_{r})_{q}!)^{t_{i_{r}}}(t_{i_{r}})_{q^{i_{r}}}!}\\
&=\frac{(n)_{q}!\left((i_{m}t_{i_{m}})_{q}+q^{i_{m}t_{i_{m}}}(i_{m-1}t_{i_{m-1}})_{q}+\ldots+q^{(i_{m}t_{i_{m}}+i_{m-1}t_{i_{m-1}}+\ldots+i_{1}t_{i_{1}})}(i_{1}t_{i_{1}})_{q}\right)}{\prod_{r=1}^{m}((i_{r})_{q}!)^{t_{i_{r}}}(t_{i_{r}})_{q^{i_{r}}}!}\\
&=\frac{(n)_{q}!(\sum_{j=1}^{m}i_{j}t_{i_{j}})_{q}}{\prod_{r=1}^{m}((i_{r})_{q}!)^{t_{i_{r}}}(t_{i_{r}})_{q^{i_{r}}}!}\\
&=\frac{(n+1)_{q}!}{\prod_{r=1}^{m}((i_{r})_{q}!)^{t_{i_{r}}}(t_{i_{r}})_{q^{i_{r}}}!}.
\end{align*}
\end{proof}

Note that if $q=1$, then Formulas (\ref{formula 42}), (\ref{formula 43}) derive the classical Bell polynomials. For this reason, We call $\widehat{B}_{n,q}$ the $q$-\textit{commutative Bell (differential) polynomials}.

From Corollary \ref{cor:binom-formula-by-q-Bell-diff-poly} and Theorem \ref{thm:q-comm-Bell-diff-poly}, we have the following conclusion. 
\begin{corollary}\label{cor:binom-q-comm-Bell-diff-poly}
Assume $y^{(v-1)}y^{(u-1)}=q^{v}y^{(u-1)}y^{(v-1)}$ for $v\ge u\ge 1$. Then 
\begin{align}
(x+y)^{n} &=\sum_{k=0}^{n}\binom{n}{k}_{q}\left(\sum_{t_{1}+2t_{2}+\ldots+kt_{k}=k}^{}\frac{(k)_{q}!}{\prod_{i=1}^{k}((i)_{q}!)^{t_{i}}(t_{i})_{q^{i}}!}y^{t_{1}}(y^{(1)})^{t_{2}}\cdots (y^{(k-1)})^{t_{k}}\right)x^{n-k},
\end{align}
or equivalently,

\begin{align}\label{formula 45}
(x+y)^{n} =\sum_{t_{1}+2t_{2}+\ldots+nt_{n}+t=n}^{}\frac{(n)_{q}!}{\prod_{i=1}^{n}((i)_{q}!)^{t_{i}}(t_{i})_{q^{i}}!(t)_{q}!}y^{t_{1}}(y^{(1)})^{t_{2}}\cdots (y^{(n-1)})^{t_{n}}x^{t}.
\end{align}
\end{corollary}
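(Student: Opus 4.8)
The plan is to assemble the statement directly from the two results it cites, so that no fresh induction is required. First I would apply Corollary~\ref{cor:binom-formula-by-q-Bell-diff-poly}: taking $\sigma(w)=q^{|w|}w$ for homogeneous $w\in\K\langle x,y\rangle$, it gives
\[
(x+y)^{n}=\sum_{k=0}^{n}\binom{n}{k}_{q}\widehat{B}_{k,q}(x,y)\,x^{n-k}.
\]
The hypothesis $y^{(v-1)}y^{(u-1)}=q^{v}y^{(u-1)}y^{(v-1)}$ for $v\ge u\ge 1$ is precisely the relation $d_{v}d_{u}=q^{v}d_{u}d_{v}$ (with $d_{i}:=y^{(i-1)}$) required to invoke Theorem~\ref{thm:q-comm-Bell-diff-poly}. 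Substituting Formula~(\ref{formula 43}), with $n$ replaced by $k$, for $\widehat{B}_{k,q}$ and recalling $d_{i}^{t_{i}}=(y^{(i-1)})^{t_{i}}$ reproduces the first displayed identity verbatim.

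To pass to the equivalent form~(\ref{formula 45}), I would merge the outer sum over $k$ with the inner sum by the reindexing $t:=n-k$, so that $x^{n-k}=x^{t}$ and $(n-k)_{q}!=(t)_{q}!$. Using $\binom{n}{k}_{q}=\tfrac{(n)_{q}!}{(k)_{q}!\,(n-k)_{q}!}$, the $q$-binomial coefficient cancels the factor $(k)_{q}!$ in Formula~(\ref{formula 43}) and contributes a denominator factor $(t)_{q}!$, turning the product of the two coefficients into $\tfrac{(n)_{q}!}{\prod_{i}((i)_{q}!)^{t_{i}}(t_{i})_{q^{i}}!\,(t)_{q}!}$. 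Since the inner constraint $\sum_{i=1}^{k}i\,t_{i}=k$ forces $t_{i}=0$ for $i>k$, extending the product index from $k$ to $n$ is harmless (the extra factors are $((i)_{q}!)^{0}(0)_{q^{i}}!=1$), and the joint summation condition becomes $\sum_{i=1}^{n}i\,t_{i}+t=n$, which is exactly the index set of~(\ref{formula 45}).

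The only point that needs care is bookkeeping: I must check that the map $(t_{1},\dots,t_{n},t)\mapsto\big(k=n-t,\ (t_{i})_{i\le k}\big)$ is a bijection between the index set of the single sum and that of the double sum, so that every term appears exactly once and none is lost. Concretely, for a tuple with $\sum_{i}i\,t_{i}+t=n$ one has $k=\sum_{i}i\,t_{i}=n-t$ uniquely determined, and conversely each pair (outer $k$, inner tuple) reconstructs a unique $(t_{1},\dots,t_{n},t)$; the symmetry $\binom{n}{k}_{q}=\binom{n}{n-k}_{q}$ is available if one prefers to phrase everything in terms of $t$. This is the same kind of $q$-factorial manipulation already used in the proof of Theorem~\ref{thm:q-comm-Bell-diff-poly}, so I anticipate no genuine difficulty beyond verifying this correspondence.
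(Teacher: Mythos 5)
Your proposal is correct and follows essentially the same route as the paper: the paper obtains this corollary precisely by combining Corollary \ref{cor:binom-formula-by-q-Bell-diff-poly} with Theorem \ref{thm:q-comm-Bell-diff-poly} (Formula (\ref{formula 43}) with $n$ replaced by $k$), and the passage to the merged form (\ref{formula 45}) is the substitution $t=n-k$ together with the cancellation of $(k)_q!$ against $\binom{n}{k}_q=\tfrac{(n)_q!}{(k)_q!(n-k)_q!}$, exactly as you describe. Your explicit check of the bijection between the index sets of the double and single sums is a detail the paper leaves implicit, but it is the same argument.
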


\begin{remark}\label{rmk:q-Bell-diff-poly-to-Blumen-formula}
Let $q\neq 0$. Assume $xy=qyx+y^{(1)},xy^{(1)}=q^{2}y^{(1)}x$ and $y^{(1)}y=q^{2}yy^{(1)}$. Then $y^{(n)}=0$ for $n\ge 2$. The formula (\ref{formula 45}) yields 
\begin{align*}
(x+y)^{n} 
&=\sum_{r+2s+t=n}^{}\frac{(n)_{q}!}{(r)_{q}!(2)_{q}^{s}(s)_{q^{2}}(t)_{q}!}y^{r}(y^{(1)})^{s}x^{t},
\end{align*}
which is equivalent to Formula (\ref{formula:Blumen}).
\end{remark}

\subsection{Bialgebras or Hopf algebras related to the shuffle type polynomials}\label{Section 5.3}
\qquad

We apply the shuffle type polynomials to the construction of bialgebras or Hopf algebras.

\subsubsection{The noncommutative F\`aa di Bruno bialgebra}
\qquad

From the noncommutative formal diffeomorphisms, Brouder--Frabetti--Krattenthaler \cite{BFK2006} introduced the free noncommutative Fa\`a di Bruno bialgebra of which the coproduct are determined by the following polynomials in the free algebra $\K\langle \{a_{k}\}_{k=0}^{\infty} \rangle$:
\begin{align*}
Q_{m}^{(n)}(a)=\sum_{i_{0}+i_{1}+\ldots+i_{n}=m \atop
i_{0},i_{1},\ldots,i_{n}\ge 0}  a_{i_{0}}a_{i_{1}}\cdots a_{i_{n}}, \qquad \forall~m,n\ge 0,
\end{align*}
see \cite[Section 2.2]{BFK2006}.

We show that the shuffle type polynomials are strongly related to $Q_{m}^{(n)}(a)$ as follows.
\begin{theorem}\label{thm:shuffle-poly-BFdBnc-coproduct}
Let $(X,\prec)=(\{g,h\}, g\prec h)$ and $a_{k}:=g\underbrace{h\cdots h}_{k}\in \X$ for $k\ge 0$. Then we have:
\begin{align}
g\mathcal{SH}_{m,n}(h,g)=\sum_{i_{0}+i_{1}+\ldots+i_{n}=m,\atop i_{0},i_{1},\ldots,i_{n}\ge 0}^{}a_{i_{0}}a_{i_{1}}\cdots a_{i_{n}}, \qquad \forall~ m,n\ge 0.
\end{align}
\end{theorem}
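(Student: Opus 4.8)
The plan is to prove the identity bijectively, by showing that both sides equal the formal sum, each term with coefficient $1$, of all words $v\in\X$ that begin with the letter $g$ and satisfy $v(g)=n+1$ and $v(h)=m$. Call this set of words $W_{m,n}$.

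First I would unwind the left-hand side. By Definition~\ref{Definition 2.8}, $\mathcal{SH}_{m,n}(h,g)=h^{m}\shuffle g^{n}$ is the sum, each with coefficient $1$, of all words $w\in\X$ with $w(h)=m$ and $w(g)=n$ (these are exactly the length-$(m+n)$ words occurring in $(g+h)^{m+n}$ with the prescribed letter counts, each occurring once). Left-multiplying by $g$ gives $g\mathcal{SH}_{m,n}(h,g)=\sum_{w}gw$, the sum over those $w$. I would then observe that $w\mapsto gw$ is a bijection from $\{w\in\X : w(g)=n,\ w(h)=m\}$ onto $W_{m,n}$: every $v\in W_{m,n}$ begins with $g$, hence factors uniquely as $v=gw$ with $w$ its suffix, and $w$ then has $n$ occurrences of $g$ and $m$ of $h$. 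Consequently $g\mathcal{SH}_{m,n}(h,g)=\sum_{v\in W_{m,n}}v$.

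Next I would analyse the right-hand side. Each summand is $a_{i_{0}}a_{i_{1}}\cdots a_{i_{n}}=g h^{i_{0}}g h^{i_{1}}\cdots g h^{i_{n}}$, a word beginning with $g$ that has exactly $n+1$ letters $g$ and $i_{0}+\cdots+i_{n}$ letters $h$; under the constraint $i_{0}+\cdots+i_{n}=m$ this word lies in $W_{m,n}$. The key combinatorial fact is the unique \emph{block decomposition}: any word $v\in\X$ beginning with $g$ factors uniquely as a concatenation of blocks of the form $gh^{i}$ (with $i\ge 0$), the $(j+1)$-th block starting at the $(j+1)$-th occurrence of $g$; the number of blocks equals $v(g)$ and the sum of the block exponents equals $v(h)$. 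Applying this to $v\in W_{m,n}$ shows that $v\mapsto(i_{0},\ldots,i_{n})$ is a bijection from $W_{m,n}$ onto the set of $(n+1)$-tuples of non-negative integers with $i_{0}+\cdots+i_{n}=m$. Hence the right-hand side is also $\sum_{v\in W_{m,n}}v$, each term with coefficient $1$, and the two sides coincide.

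There is no deep obstacle here; the identity is essentially a reindexing. The points that require care are the coefficient bookkeeping---verifying that every admissible word occurs in $\mathcal{SH}_{m,n}(h,g)$ with coefficient exactly $1$ (no overcounting on the left) and that distinct tuples $(i_{0},\ldots,i_{n})$ produce distinct words (no overcounting on the right)---together with a clean statement of the uniqueness of the block decomposition. One could alternatively induct on $m+n$ using the recursion~(\ref{formula:recursion1-shuffle-poly}) for $\mathcal{SH}_{m,n}$, but since the $\mathcal{SH}$-recursion and the natural recursion on the $a_{i}$-products do not align immediately after prefixing $g$, the bijective argument is more transparent and I would prefer it.
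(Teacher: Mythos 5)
Your proof is correct, but it takes a genuinely different route from the paper's. You argue bijectively: both sides are the multiplicity-free sum of all words beginning with $g$ that contain $n+1$ letters $g$ and $m$ letters $h$, using on one side the fact that $h^{m}\shuffle g^{n}$ lists each word of the prescribed letter content exactly once, and on the other side the unique block decomposition $v=gh^{i_{0}}gh^{i_{1}}\cdots gh^{i_{n}}$. The paper instead proceeds by induction on $m+n$ using the \emph{right}-multiplication recursion (\ref{formula:recursion2-shuffle-poly}), $g\mathcal{SH}_{m,n}(h,g)=g\mathcal{SH}_{m,n-1}(h,g)\,g+g\mathcal{SH}_{m-1,n}(h,g)\,h$, then absorbs the trailing $g$ as $a_{0}$ (the terms with $i_{n}=0$) and the trailing $h$ as $a_{j_{n}+1}$ (the terms with $i_{n}\ge 1$); the two cases recombine into the full sum over $i_{0}+\cdots+i_{n}=m$. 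Note that your closing remark dismisses the inductive route on the grounds that the recursion does not align with the $a_{i}$-products, but you were looking at the left recursion (\ref{formula:recursion1-shuffle-poly}); the right recursion aligns perfectly, since the last block of $a_{i_{0}}\cdots a_{i_{n}}$ ends in $g$ exactly when $i_{n}=0$ and in $h$ exactly when $i_{n}\ge 1$, which is precisely the paper's case split. What each approach buys: yours makes the identity conceptually transparent as pure reindexing of words and needs no induction, at the cost of justifying the multiplicity-one property of the shuffle and the uniqueness of the block decomposition; the paper's is a short formal computation straight from the defining recursion, with no combinatorial lemmas to set up.
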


\begin{proof}
By Formula (\ref{formula:recursion2-shuffle-poly}) and induction on $m+n$, we have
\begin{align*}
g\mathcal{SH}_{m,n}(h,g)&=g\mathcal{SH}_{m,n-1}(h,g)g+g\mathcal{SH}_{m-1,n}(h,g)h\\
=&\sum_{i_{0}+i_{1}+\ldots+i_{n-1}=m \atop i_{0},i_{1},\ldots,i_{n-1}\ge 0}^{}a_{i_{0}}a_{i_{1}}\cdots a_{i_{n-1}}g+\sum_{j_{0}+j_{1}+\ldots+j_{n}=m-1,\atop j_{0},j_{1},\ldots,j_{n}\ge 0}^{}a_{j_{0}}a_{j_{1}}\cdots a_{j_{n-1}}a_{j_{n}}h\\
=&\sum_{i_{0}+i_{1}+\ldots+i_{n-1}=m \atop i_{0},i_{1},\ldots,i_{n-1}\ge 0}^{}a_{i_{0}}a_{i_{1}}\cdots a_{i_{n-1}}a_{0}+\sum_{j_{0}+j_{1}+\ldots+j_{n}=m-1,\atop j_{0},j_{1},\ldots,j_{n}\ge 0}^{}a_{j_{0}}a_{j_{1}}\cdots a_{j_{n-1}}a_{(j_{n}+1)}\\
=&\sum_{i_{0}+i_{1}+\ldots+i_{n}=m \atop i_{0},i_{1},\ldots,i_{n-1}\ge 0 ,~i_{n}=0}a_{i_{0}}a_{i_{1}}\cdots a_{i_{n}}+\sum_{i_{0}+i_{1}+\ldots+i_{n}=m \atop i_{0},i_{1},\ldots,i_{n-1}\ge 0,~i_{n}\ge 1}^{}a_{i_{0}}a_{i_{1}}\cdots a_{i_{n}}\\
=& \sum_{i_{0}+i_{1}+\ldots+i_{n}=m \atop i_{0},i_{1},\ldots,i_{n}\ge 0}^{}a_{i_{0}}a_{i_{1}}\cdots a_{i_{n}}.
\end{align*}
\end{proof}

\subsubsection{Pointed bialgebras or Hopf algebras}
\qquad

The following proposition concerning the bialgebras or Hopf algebras obtained from the shuffle type polynomials is due to Radford \cite{R1977}.
\begin{proposition}\label{pro:bialgebra-shuffle-relations}
Let $\mathcal{SH}_{n}$ $(n\ge 2)$ be  the algebra over $\K$  generated by $g, h$  satisfying the shuffle type  relations:
\begin{align*}
& \mathcal{SH}_{n-i,i}(h,g)=0,\quad 1\le i \le n-1.
\end{align*}
Then $\mathcal{SH}_{n}$ is an $\mathbb{N}$-graded pointed bialgebra  with the 
 coalgebra structure:
\begin{align*}
\Delta(g)=g\otimes g &, \qquad \Delta(h)=1\otimes h+h\otimes g,\\
\epsilon(g)=1 &, \qquad \epsilon(h)=0.
\end{align*}
Moreover, the quotient $\overline{\mathcal{SH}_{n}}:=\mathcal{SH}_{n}/(g^{n}-1,~h^{n})$ is an $\mathbb{N}$-graded pointed Hopf algebra.
\end{proposition}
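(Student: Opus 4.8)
The plan is to realize $\mathcal{SH}_{n}$ as a quotient of the free bialgebra on one grouplike and one skew-primitive generator, and then to pass to a further quotient to get a Hopf algebra. First I would equip the free algebra $A:=\K\langle g,h\rangle$ with a bialgebra structure: since $A$ is free, the assignments $\Delta(g)=g\otimes g$, $\Delta(h)=1\otimes h+h\otimes g$, $\epsilon(g)=1$, $\epsilon(h)=0$ extend uniquely to algebra homomorphisms $\Delta:A\to A\otimes A$ and $\epsilon:A\to\K$, and coassociativity and the counit axiom need only be checked on the two generators, where they say exactly that $g$ is grouplike and $h$ is $(1,g)$-skew-primitive. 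I grade $A$ by $\deg g=0$, $\deg h=1$ (the number of occurrences of $h$); then $\Delta$ and $\epsilon$ are graded maps, and each $\mathcal{SH}_{k,m}(h,g)=h^{k}\shuffle g^{m}$ is homogeneous of degree $k$.

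The key step is a coproduct formula for the shuffle polynomials:
\begin{align}\label{eq:Delta-SH}
\Delta\big(\mathcal{SH}_{k,m}(h,g)\big)=\sum_{a=0}^{k}\mathcal{SH}_{a,m}(h,g)\otimes \mathcal{SH}_{k-a,\,m+a}(h,g).
\end{align}
I would prove \eqref{eq:Delta-SH} by induction on $k+m$, applying $\Delta$ to the recursion (\ref{formula:recursion2-shuffle-poly}), namely $\mathcal{SH}_{k,m}=\mathcal{SH}_{k-1,m}h+\mathcal{SH}_{k,m-1}g$, and using $\Delta(h)=1\otimes h+h\otimes g$ and $\Delta(g)=g\otimes g$ together with the same recursion to reassemble the resulting terms. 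Tracking the $g$-twist $m\mapsto m+a$ produced by pushing the grouplike $g$ through the right tensor factor is the only delicate point, and I expect this induction to be the technical heart of the argument; once it is done the rest is bookkeeping.

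With \eqref{eq:Delta-SH} in hand, let $I\subset A$ be the ideal generated by $\mathcal{SH}_{n-i,i}(h,g)$ for $1\le i\le n-1$, equivalently by $\mathcal{SH}_{k,n-k}(h,g)$ for $1\le k\le n-1$. To see $I$ is a coideal I read off \eqref{eq:Delta-SH} with $m=n-k$: every summand $\mathcal{SH}_{a,n-k}\otimes\mathcal{SH}_{k-a,\,n-k+a}$ has a right tensor factor of total length exactly $n$. When $0\le a\le k-1$ this right factor is $\mathcal{SH}_{k-a,\,n-(k-a)}$ with $1\le k-a\le n-1$, hence a defining relation, so the summand lies in $A\otimes I$; the remaining summand $a=k$ equals $\mathcal{SH}_{k,n-k}\otimes g^{n}$, whose left factor lies in $I$. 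Thus $\Delta(I)\subseteq I\otimes A+A\otimes I$, and $\epsilon$ annihilates every generator since each contains at least one factor $h$. Hence $I$ is a biideal, $\mathcal{SH}_{n}=A/I$ is a bialgebra, it is $\mathbb{N}$-graded because $I$ is homogeneous, and it is pointed because it is generated as an algebra by the grouplike $g$ and the skew-primitive $h$.

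For the Hopf quotient I set $J:=(g^{n}-1,\,h^{n})\subset\mathcal{SH}_{n}$ and check it is again a biideal. One has $\Delta(g^{n}-1)=g^{n}\otimes(g^{n}-1)+(g^{n}-1)\otimes 1$ and $\epsilon(g^{n}-1)=0$. For $h^{n}=\mathcal{SH}_{n,0}$, formula \eqref{eq:Delta-SH} gives $\Delta(h^{n})=\sum_{a=0}^{n}h^{a}\otimes\mathcal{SH}_{n-a,a}$, but in $\mathcal{SH}_{n}$ every middle term $1\le a\le n-1$ vanishes, since its right factor $\mathcal{SH}_{n-a,a}$ has $h$-degree $n-a\in\{1,\dots,n-1\}$ and total length $n$, i.e.\ it is an imposed relation; hence $\Delta(h^{n})=1\otimes h^{n}+h^{n}\otimes g^{n}$, which lies in $\mathcal{SH}_{n}\otimes J+J\otimes\mathcal{SH}_{n}$, and $\epsilon(h^{n})=0$. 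Therefore $\overline{\mathcal{SH}_{n}}=\mathcal{SH}_{n}/J$ is an $\mathbb{N}$-graded bialgebra whose degree-zero component is $\K[g]/(g^{n}-1)\cong\K\mathbb{Z}_{n}$, a Hopf algebra. I then invoke the standard fact that a graded bialgebra whose degree-zero part is a Hopf algebra admits an antipode (here determined on generators by $S(g)=g^{n-1}$ and $S(h)=-hg^{n-1}$), which makes $\overline{\mathcal{SH}_{n}}$ a pointed Hopf algebra. The main obstacle remains the coproduct identity \eqref{eq:Delta-SH}; after that the coideal and biideal checks reduce to the length/degree bookkeeping above, and the antipode is furnished automatically by the graded structure.
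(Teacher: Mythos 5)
Your proposal is correct, but it takes a genuinely different route from the paper, for the simple reason that the paper does not prove this proposition at all: its entire proof is the citation ``It follows from \cite[Proposition 4.7]{R1977}.'' What you have done is reconstruct a self-contained version of Radford's argument. Your key coproduct identity $\Delta\big(\mathcal{SH}_{k,m}(h,g)\big)=\sum_{a=0}^{k}\mathcal{SH}_{a,m}(h,g)\otimes \mathcal{SH}_{k-a,\,m+a}(h,g)$ is correct: it checks on small cases (e.g.\ $\Delta(h^{2})=1\otimes h^{2}+h\otimes(hg+gh)+h^{2}\otimes g^{2}$), and your induction does close up, since applying the recursion (\ref{formula:recursion2-shuffle-poly}) to \emph{both} tensor factors of the target term $\mathcal{SH}_{a,m}\otimes\mathcal{SH}_{k-a,m+a}$ reproduces exactly the three families of terms arising from $\Delta(\mathcal{SH}_{k-1,m})(1\otimes h+h\otimes g)+\Delta(\mathcal{SH}_{k,m-1})(g\otimes g)$. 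The biideal bookkeeping is sound (right factors have $h$-degree in $\{1,\dots,n-1\}$ and length $n$, hence are relations; the $a=k$ summand has left factor a relation), as is the computation $\Delta(h^{n})=1\otimes h^{n}+h^{n}\otimes g^{n}$ in $\mathcal{SH}_{n}$. The two unproved facts you invoke are genuinely standard and correctly applied: a bialgebra generated by grouplike and skew-primitive elements is pointed (Montgomery, Lemma 5.5.1), and a graded bialgebra whose degree-zero component is a Hopf algebra admits an antipode (the coradical sits inside the degree-zero part by the coalgebra-filtration lemma, and one then applies Takeuchi's convolution-invertibility criterion); for the antipode you could argue even more cheaply that $\overline{\mathcal{SH}_{n}}$ is pointed and its grouplikes form a finite group because $g^{n}=1$. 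In short, your approach buys a complete, explicit verification whose technical heart is the coproduct formula for shuffle type polynomials, while the paper's approach buys brevity by deferring entirely to Radford, where that same formula is the underlying computation.
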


\begin{proof}
It follows from \cite[Proposition 4.7]{R1977}.
\end{proof}

\begin{remark}
\noindent
$\mathcal{SH}_n$ is infinite-dimensional. 
Assume $q\in \K$ such that $q$ is a primitive $n$-th root of $1$. If $gh=qhg$, then $\mathcal{SH}_{i,n-i}(h,g)=\binom{n}{i}_{q}h^{i}g^{n-i}=0$ for all $1\le i\le n-1$. Thus the Sweedler algebra $H_{4}$ and the Taft algebra $H_{n,q}$ are quotient Hopf algebras of $\mathcal{SH}_n$. If char $\K=0$, then $\overline{\mathcal{SH}_2}$ is the Sweedler algebra $H_{4}$. If char $\K=2$, then $\overline{\mathcal{SH}_2}$ is the group algebra of the Klein group. 

Moreover, Radford studied a $27$-dimensional quotient Hopf algebra of $\overline{\mathcal{SH}_{3}}$ and a $64$-dimensional quotient Hopf algebra of $\overline{\mathcal{SH}_{4}}$.
\end{remark}

\begin{appendix}
\section{Examples of Theorem \ref{thm:noncom-binom-coeff-LSbasis}}
\label{Appendix A}

We rewrite the following shuffle type polynomials $\mathcal{SH}_{k,n-k}(y,x)$ ($4\le n\le 7$) in terms of the Lyndon-Shirshov basis. For the sake of notations of commutators,  we write $E_{1}$ for $x$ and $E_{2}$ for $y$ when commutators appear. 
\begin{example}\label{Example 6.5}
If $n=5$, then
\begin{align*}
& \mathcal{SH}_{0,5}=x^{5}, \\
&\mathcal{SH}_{1,4}=yx^{4}+xyx^{3}+x^{2}yx^{2}+x^{3}yx+x^{4}y,\\
&\mathcal{SH}_{2,3}=y^{2}x^{3}+yxyx^{2}+yx^{2}yx+yx^{3}y+xy^{2}x^{2}+xyxyx+xyx^{2}y+x^{2}y^{2}x+x^{2}yxy+x^{3}y^{2},\\
&\mathcal{SH}_{3,2}=y^{3}x^{2}+y^{2}xyx+y^{2}x^{2}y+yxy^{2}x+yxyxy+yx^{2}y^{2}+xy^{3}x+xy^{2}xy+xyxy^{2}+x^{2}y^{3},\\
&\mathcal{SH}_{4,1}=y^{4}x+y^{3}xy+y^{2}xy^{2}+yxy^{3}+xy^{4},\\
&\mathcal{SH}_{5,0}=y^{5}. 
\end{align*}
Thus
\begin{align*}
\mathcal{SH}_{0,5} =& E_{1}^{5},\\
\mathcal{SH}_{1,4} =& 5E_{2}E_{1}^{4}+10E_{12}E_{1}^{3}+10E_{112}E_{1}^{2}+5E_{1112}E_{1}+E_{11112},\\
\mathcal{SH}_{2,3} =& 10E_{2}^{2}E_{1}^{3}+30E_{2}E_{12}E_{1}^{2}+20E_{2}E_{112}E_{1}+5E_{2}E_{1112}+10E_{122}E_{1}^{2}+15E_{12}^{2}E_{1}+10E_{12}E_{112}+\\
& 5E_{1122}E_{1}+3E_{11212}+E_{11122},\\
\mathcal{SH}_{3,2} =& 10E_{2}^{3}E_{1}^{2}+30E_{2}^{2}E_{12}E_{1}+10E_{2}^{2}E_{112}+20E_{2}E_{122}E_{1}+15E_{2}E_{12}^{2}+5E_{2}E_{1122}+5E_{1222}E_{1}+\\
& 10E_{122}E_{12}+4E_{12122}+E_{11222},\\
\mathcal{SH}_{4,1} =& 5E_{2}^{4}E_{1}+10E_{2}^{3}E_{12}+10E_{2}^{2}E_{122}+5E_{2}E_{1222}+E_{12222},\\
\mathcal{SH}_{5,0}=& E_{2}^{5}. 
\end{align*}

\end{example}
 
\begin{example} 
 For $n=6$, we have:
\begin{align*}
\mathcal{SH}_{0,6} =& E_{1}^{6},\\
\mathcal{SH}_{1,5} =& 6E_{2}E_{1}^{5}+15E_{12}E_{1}^{4}+20E_{112}E_{1}^{3}+15E_{1112}E_{1}^{2}+6E_{11112}E_{1}+E_{111112},\\ 
\mathcal{SH}_{2,4} =& 15E_{2}^{2}E_{1}^{4}+60E_{2}E_{12}E_{1}^{3}+60E_{2}E_{112}E_{1}^{2}+30E_{2}E_{1112}E_{1}+6E_{2}E_{11112}+20E_{122}E_{1}^{3}+45E_{12}^{2}E_{1}^{2}+\\
& 60E_{12}E_{112}E_{1}+15E_{12}E_{1112}+15E_{1122}E_{1}^{2}+18E_{11212}E_{1}+10E_{112}^{2}+6E_{11122}E_{1}+3E_{111212}+\\ & E_{111122},\\
\mathcal{SH}_{3,3} =& 20E_{2}^{3}E_{1}^{3}+90E_{2}^{2}E_{12}E_{1}^{2}+60E_{2}^{2}E_{112}E_{1}+15E_{2}^{2}E_{1112}+60E_{2}E_{122}E_{1}^{2}+90E_{2}E_{12}^{2}E_{1}+60E_{2}E_{12}\\
& E_{112}+30E_{2}E_{1122}E_{1}+18E_{2}E_{11212}+6E_{2}E_{11122}+15E_{1222}E_{1}^{2}+60E_{122}E_{12}E_{1}+20E_{122}E_{112}+\\
& 24E_{12122}E_{1}+15E_{12}^{3}+15E_{12}E_{1122}+6E_{11222}E_{1}+10E_{112212}+4E_{112122}+E_{111222},\\
\mathcal{SH}_{4,2} = &15E_{2}^{4}E_{1}^{2}+60E_{2}^{3}E_{12}E_{1}+20E_{2}^{3}E_{112}+60E_{2}^{2}E_{122}E_{1}+45E_{2}^{2}E_{12}^{2}+15E_{2}^{2}E_{1122}+30E_{2}E_{1222}E_{1}+\\
& 60E_{2}E_{122}E_{12}+24E_{2}E_{12122}+6E_{2}E_{11222}+6E_{12222}E_{1}+15E_{1222}E_{12}+10E_{122}^{2}+5E_{121222}+\\ & E_{112222},\\
\mathcal{SH}_{5,1} =& 6E_{2}^{5}E_{1}+15E_{2}^{4}E_{12}+20E_{2}^{3}E_{122}+15E_{2}^{2}E_{1222}+6E_{2}E_{12222}+E_{122222},\\
\mathcal{SH}_{6,0} =& E_{2}^{6}.
\end{align*}

For $\alpha\in \mathcal{L}$, $C_{E_{\alpha}}$ has effects on the  coefficients of the products which contain $E_{\alpha}$,  see Formula (\ref{j-z formula}). For example, $C_{E_{11212}}=3$ and $C_{E_{12122}}=4$. It is clear that $C_{E_{11212}E_{1}}=18=\frac{6!}{5!1!}C_{E_{11212}}$ and $C_{E_{2}E_{12122}}=24=\frac{6!}{5!1!}C_{E_{12122}}$.
\end{example}

\begin{example}
For $n=7$, we obtain:
\begin{align*}
\mathcal{SH}_{0,7} =& E_{1}^{7}.\\
\mathcal{SH}_{1,6} =& 7E_{2}E_{1}^{6}+21E_{12}E_{1}^{5}+35E_{112}E_{1}^{4}+35E_{1112}E_{1}^{3}+21E_{11112}E_{1}^{2}+7E_{111112}E_{1}+E_{1111112},\\
\mathcal{SH}_{2,5} =& 21E_{2}^{2}E_{1}^{5}+105E_{2}E_{12}E_{1}^{4}+140E_{2}E_{112}E_{1}^{3}+105E_{2}E_{1112}E_{1}^{2}+42E_{2}E_{11112}E_{1}+7E_{2}E_{111112}+\\
& 35E_{122}E_{1}^{4}+105E_{12}^{2}E_{1}^{3}+210E_{12}E_{112}E_{1}^{2}+105E_{12}E_{1112}E_{1}+21E_{12}E_{11112}+35E_{1122}E_{1}^{3}+\\
& 63E_{11212}E_{1}^{2}+70E_{112}^{2}E_{1}+35E_{112}E_{1112}+21E_{11122}E_{1}^{2}+21E_{111212}E_{1}+10E_{1112112}+\\
& 7E_{111122}E_{1}+3E_{1111212}+E_{1111122},\\
\mathcal{SH}_{3,4} =& 35E_{2}^{3}E_{1}^{4}+210E_{2}^{2}E_{12}E_{1}^{3}+210E_{2}^{2}E_{112}E_{1}^{2}+105E_{2}^{2}E_{1112}E_{1}+21E_{2}^{2}E_{11112}+140E_{2}E_{122}E_{1}^{3}+\\
& 315E_{2}E_{12}^{2}E_{1}^{2}+420E_{2}E_{12}E_{112}E_{1}+105E_{2}E_{12}E_{1112}+105E_{2}E_{1122}E_{1}^{2}+126E_{2}E_{11212}E_{1}+\\
& 70E_{2}E_{112}^{2}+42E_{2}E_{11122}E_{1}+21E_{2}E_{111212}+7E_{2}E_{111122}+35E_{1222}E_{1}^{3}+210E_{122}E_{12}E_{1}^{2}+\\
& 140E_{122}E_{112}E_{1}+35E_{122}E_{1112}+84E_{12122}E_{1}^{2}+105E_{12}^{3}E_{1}+105E_{12}^{2}E_{112}+105E_{12}E_{1122}E_{1}+\\
& 63E_{12}E_{11212}+21E_{12}E_{11122}+21E_{11222}E_{1}^{2}+70E_{112212}E_{1}+35E_{1122}E_{112}+28E_{112122}E_{1}+\\
& 15E_{1121212}+15E_{1121122}+7E_{111222}E_{1}+10E_{1112212}+4E_{1112122}+E_{1111222},\\
\mathcal{SH}_{4,3} =& 35E_{2}^{4}E_{1}^{3}+210E_{2}^{3}E_{12}E_{1}^{2}+140E_{2}^{3}E_{112}E_{1}+35E_{2}^{3}E_{1112}+210E_{2}^{2}E_{122}E_{1}^{2}+315E_{2}^{2}E_{12}^{2}E_{1}+\\
& 210E_{2}^{2}E_{12}E_{112}+105E_{2}^{2}E_{1122}E_{1}+63E_{2}^{2}E_{11212}+21E_{2}^{2}E_{11122}+105E_{2}E_{1222}E_{1}^{2}+\\
& 420E_{2}E_{122}E_{12}E_{1}+140E_{2}E_{122}E_{112}+168E_{2}E_{12122}E_{1}+105E_{2}E_{12}^{3}+105E_{2}E_{12}E_{1122}+\\
& 42E_{2}E_{11222}E_{1}+70E_{2}E_{112212}+28E_{2}E_{112122}+7E_{2}E_{111222}+21E_{12222}E_{1}^{2}+105E_{1222}E_{12}E_{1}+\\
& 35E_{1222}E_{112}+70E_{122}^{2}E_{1}+105E_{122}E_{12}^{2}+35E_{122}E_{1122}+35E_{121222}E_{1}+84E_{12122}E_{12}+\\
& 24E_{1212122}+21E_{12}E_{11222}+7E_{112222}E_{1}+15E_{1122212}+10_{1122122}+5E_{1121222}+E_{1112222},\\
\mathcal{SH}_{5,2} =& 21E_{2}^{5}E_{1}^{2}+105E_{2}^{4}E_{12}E_{1}+35E_{2}^{4}E_{112}+140E_{2}^{3}E_{122}E_{1}+105E_{2}^{3}E_{12}^{2}+35E_{2}^{3}E_{1122}+\\
& 105E_{2}^{2}E_{1222}E_{1}+210E_{2}^{2}E_{122}E_{12}+84E_{2}^{2}E_{12122}+21E_{2}^{2}E_{11222}+42E_{2}E_{12222}E_{1}+\\
& 105E_{2}E_{1222}E_{12}+70E_{2}E_{122}^{2}+35E_{2}E_{121222}+7E_{2}E_{112222}+7E_{122222}E_{1}+21E_{12222}E_{12}+\\
& 35E_{1222}E_{122}+15E_{1221222}+6E_{1212222}+E_{1122222},\\
\mathcal{SH}_{6,1} =& 7E_{2}^{6}E_{1}+21E_{4}^{5}E_{12}+35E_{2}^{4}E_{122}+35E_{2}^{3}E_{1222}+21E_{2}^{2}E_{12222}+7E_{2}E_{122222}+E_{1222222},\\
\mathcal{SH}_{7,0} =& E_{2}^{7}.
\end{align*}
\end{example}
\end{appendix}

\section*{\textbf{ACKNOWLEGEMENT}}
The first author thanks the China Scholarship Council (No. 201906140164) and the BOF of UHasselt for their financial support during his research study at the University of Hasselt.

\bibliographystyle{plain}

\end{document}